\theoremstyle{thrm}
\theoremstyle{plain}
\newtheorem{no}{Notation}[section]
\newtheorem{thm}{Theorem}[section]
\newtheorem{lemma}[thm]{Lemma}
\newtheorem{prop}[thm]{Proposition}
\newtheorem{cor}[thm]{Corollary}
\newtheorem{defn}[thm]{Definition}
\newtheorem*{ack}{Acknowledgement}
\theoremstyle{definition}
\newtheorem{remark}[equation]{Remark}
\newcommand{\B}{\operatorname{B} }
\newcommand{\C}{\operatorname{C} }
\newcommand{\Ho}{\operatorname{H} }
\newcommand{\Z}{\operatorname{Z} }
\newcommand{\im}{\operatorname{im} }
\newcommand{\Fix}{\operatorname{Fix}}
\newcommand{\Id}{\operatorname{id}}
\newcommand{\I}{\operatorname{I} }
\newcommand{\Aut}{\operatorname{Aut} }
\newcommand{\Tra}{\operatorname{Tra} }
\newcommand{\Res}{\operatorname{Res} }
\newcommand{\Inf}{\operatorname{Inf} }
\newcommand{\Ext}{\operatorname{Ext} }
\newcommand{\CExt}{\operatorname{CExt} }
\newcommand{\Hom}{\operatorname{Hom} }
\newcommand{\Ker}{\operatorname{ker} }
\newcommand{\Ann}{\operatorname{Ann} }
\newcommand{\IM}{\operatorname{im} }
\numberwithin{equation}{section}
\begin{document}

\title{Schur multiplier and Schur covers of relative Rota-Baxter groups}

\author{Pragya Belwal}
\address{Department of Mathematical Sciences, Indian Institute of Science Education and Research (IISER) Mohali, Sector 81, SAS Nagar, P O Manauli, Punjab 140306, India} 
\email{pragyabelwal.math@gmail.com}

\author{Nishant Rathee}
\address{Department of Mathematical Sciences, Indian Institute of Science Education and Research (IISER) Mohali, Sector 81, SAS Nagar, P O Manauli, Punjab 140306, India} 
\email{nishantrathee@iisermohali.ac.in}

\author{Mahender Singh}
\address{Department of Mathematical Sciences, Indian Institute of Science Education and Research (IISER) Mohali, Sector 81, SAS Nagar, P O Manauli, Punjab 140306, India} 
\email{mahender@iisermohali.ac.in} 

\subjclass[2010]{17B38, 16T25, 81R50}
\keywords{Cohomology; extension; isoclinism; relative Rota-Baxter group; Schur multiplier; Schur cover; skew left brace; Yang-Baxter equation}

\begin{abstract}
Relative Rota-Baxter groups are generalizations of Rota-Baxter groups and share a close connection with skew left braces. These structures are well-known for offering bijective non-degenerate set-theoretical solutions to the Yang-Baxter equation. This paper builds upon the recently introduced extension theory and low-dimensional cohomology of relative Rota-Baxter groups. We prove an analogue of the Hochschild-Serre exact sequence for central extensions of relative Rota-Baxter groups. We introduce the Schur multiplier  $M_{RRB}(\mathcal{A})$ of a  relative Rota-Baxter group $\mathcal{A} =(A,B,\beta,T)$, and prove that the exponent of $M_{RRB}(\mathcal{A})$ divides $|A||B|$ when $\mathcal{A}$ is finite. We define weak isoclinism of relative Rota-Baxter groups, introduce their Schur covers, and prove that any two Schur covers of a finite bijective relative Rota-Baxter group are weakly isoclinic. The results align with recent results of Letourmy and Vendramin for skew left braces.
\end{abstract}	

\maketitle

\section{Introduction}

A set-theoretical solution to the Yang-Baxter equation is a pair $(X, r)$, consisting of a set $X$ and a map $r: X\times X \longrightarrow X\times X$ expressed as $r(x,y) = (\sigma_x(y), \tau_y(x))$, such that the identity
$$(r\times \Id)(\Id \times r)(r\times \Id)=(\Id \times r)(r\times \Id)( \Id \times r)$$
holds. A solution is categorized as non-degenerate when the maps $ \sigma_x$ and $\tau_x$ are bijective for all $x \in X$.  In \cite{MR1183474}, Drinfel'd proposed to classify all set-theoretical solutions to the Yang-Baxter equation in order to understand its solutions in full generality.
\par 

Rota-Baxter operators of weight 1 on Lie groups were introduced by Guo, Lang and Sheng in \cite{LHY2021}, with a focus on smooth Rota-Baxter operators. The purpose was to enable differentiation, yielding Rota-Baxter operators of weight 1 on the corresponding Lie algebras. Subsequent work on Rota-Baxter operators in (abstract) groups has been conducted by Bardakov and Gubarev in \cite{VV2022, VV2023}. They proved that every Rota-Baxter operator on a group gives rise to a skew left brace structure on that group, providing a set-theoretical solution to the Yang-Baxter equation. Further, extensions and automorphisms of Rota-Baxter groups have been explored in \cite{AN2}. Skew left braces, central to the classification of solutions to the Yang-Baxter equation, were introduced by Guarnieri and Vendramin \cite{GV17}. It is established therein that every skew left brace leads to a bijective non-degenerate solution to the Yang-Baxter equation.
\par

In a recent work \cite{BRS2023}, the authors developed an extension theory and introduced low-dimensional cohomology of relative Rota-Baxter groups. This theory differs from those known in the context of Rota-Baxter operators on Lie groups. The present paper further develops this cohomology theory in the spirit of Schur \cite{schur1904, schur1907}, and is also motivated by the recent work of Letourmy and Vendramin on skew left braces  \cite{TV23}. By utilizing the idea of isoclinism for skew left braces, as outlined in \cite{TV22}, they establish in \cite{TV23} that distinct Schur covers corresponding to a finite skew left brace are isoclinic. In this paper, we extend this construction to the domain of relative Rota-Baxter groups through the recent classification of abelian extensions of relative Rota-Baxter groups via the second cohomology group, as formulated in \cite{BRS2023}.
\par

The paper is organised as follows. We recall the necessary background on relative Rota-Baxter groups in Section \ref{section prelim}. Further, the recently introduced extension theory and cohomology for relative Rota-Baxter groups and their connections with analogous objects for groups and skew left braces is recalled in Section \ref{sec extensions and cohomology RRB}. In Section \ref{Hochschild-Serre sequence}, we  prove an analogue of the Hochschild-Serre exact sequence for central extensions of relative Rota-Baxter groups (Theorem \ref{HSS}).  In Section \ref{sec Schur multiplier and cover}, we introduce the Schur multiplier  $M_{RRB}(\mathcal{A})$ of a  relative Rota-Baxter group $\mathcal{A} =(A,B,\beta,T)$ and prove that the exponent of $M_{RRB}(\mathcal{A})$ divides $|A||B|$ when $\mathcal{A}$ is finite (Theorem \ref{VI1}), which generalises a classical result for groups. This result is then used to prove that every finite relative Rota-Baxter group admits a central extension whose corresponding transgression map is an isomorphism (Theorem \ref{existence of Transgression Isomorphism}). In Section \ref{isoclinic Schur covers}, we introduce the notion of weak isoclinism of relative Rota-Baxter groups and define their Schur covers. We prove that all central extensions of a finite relative Rota-Baxter group whose corresponding transgression maps are isomorphisms,  are weakly isoclinic (Theorem \ref{thm:almost isoclinism}). As a consequence,  we prove that every finite bijective relative Rota Baxter group admits at least one Schur cover, and that any two Schur covers are weakly isoclinic. 
\medskip

\section{Preliminaries on relative Rota-Baxter groups}\label{section prelim}

In this section, we recall some basic notions about relative Rota-Baxter groups that we shall need, and refer the readers to \cite{JYC22, NM1} for more details. We follow the  terminology of \cite{BRS2023, NM1}, which is a bit different from other works in the literature. 

\begin{defn}
	A relative Rota-Baxter group is a quadruple $(H, G, \phi, R)$, where $H$ and $G$ are groups, $\phi: G \rightarrow \Aut(H)$ a group homomorphism (where $\phi(g)$ is denoted by $\phi_g$) and $R: H \rightarrow G$ is a map satisfying the condition $$R(h_1) R(h_2)=R(h_1 \phi_{R(h_1)}(h_2))$$ for all $h_1, h_2 \in H$. 
	\par
	\noindent The map $R$ is referred as the relative Rota-Baxter operator on $H$.
\end{defn}

We say that the relative Rota-Baxter group  $(H, G, \phi, R)$ is {\it trivial} if $\phi:G \to \Aut(H)$ is the trivial homomorphism. Further, it is said to be {\it bijective} if the Rota-Baxter operator $R$ is a bijection. We refer the reader to \cite{BRS2023, JYC22} for examples.

\begin{remark}
	Note that if $(H, G, \phi, R)$ is  a trivial relative Rota-Baxter group, then $R:H \rightarrow G$ is a group homomorphism. Different homomorphisms give different trivial relative Rota-Baxter groups. Moreover, if $R$ is an isomorphism of groups, then $(H, G, \phi, R)$ is  a trivial relative Rota-Baxter group. 
\end{remark}

Let  $(H, G, \phi, R)$ be a relative Rota-Baxter group, and let $K \leq H$ and $L \leq G$ be subgroups.
\begin{enumerate}
	\item If  $\phi_\ell(K) \subseteq K$ for all $\ell \in L$, then we denote the restriction of $\phi$ by $\phi|: L \to \Aut(K)$. 
	\item If $R(K) \subseteq L$, then we denote the restriction of $R$ by $R|: K \to L$.
\end{enumerate}

\begin{defn}
	Let $(H,G,\phi,R)$ be a relative Rota-Baxter group, and $K\leq H$ and $L\leq G$ be subgroups. Suppose that  $\phi_\ell(K) \subseteq K$ for all $\ell \in L$ and $R(K) \subseteq L$. Then $(K,L,\phi |,R |)$ is a relative Rota-Baxter group, which we refer as a relative Rota-Baxter subgroup of $(H,G,\phi,R)$ and write $(K,L,\phi |,R |)\leq(H,G,\phi,R)$.
\end{defn}

\begin{defn}\label{defn ideal rbb-datum}
	Let $(H, G, \phi, R)$ be a relative Rota-Baxter group and  $(K, L,  \phi|, R|) \leq (H, G, \phi, R)$ its relative Rota-Baxter subgroup. We say that $(K, L,  \phi|, R|)$ is an ideal of $(H, G, \phi, R)$ if 
	\begin{align}
		& K \trianglelefteq H \quad \mbox{and} \quad L \trianglelefteq G, \label{I0}\\
		& \phi_g(K) \subseteq K  \mbox{ for all } g \in G, \label{I1} \\
		& \phi_\ell(h) h^{-1} \in K \mbox{ for all } h \in H \mbox{ and }  \ell \in L. \label{I2}
	\end{align}
	We write $(K, L, \phi|, R|) \trianglelefteq (H, G, \phi, R)$ to denote an ideal of a relative Rota-Baxter group. 
\end{defn}

The preceding definitions lead to the following result \cite[Theorem 5.3]{NM1}.

\begin{thm}\label{subs}
	Let $(H, G, \phi, R)$ be a relative Rota-Baxter group and $(K, L,  \phi|, R|)$ an ideal of $(H, G, \phi, R)$. Then there are maps $\overline{\phi}: G/L \to \Aut(H/K)$ and  $\overline{R}: H/K \to G/L$ defined by
	$$ \overline{\phi}_{\overline{g}}(\overline{h})=\overline{\phi_{g}(h)} \quad \textrm{and} \quad	\overline{R}(\overline{h})=\overline{R(h)}$$
	for $\overline{g} \in G/L$ and $\overline{h} \in H/K$, such that  $(H/K, G/L, \overline{\phi}, \overline{R})$ is a relative Rota-Baxter group.
\end{thm}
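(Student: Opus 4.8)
The plan is to treat the statement as an assembly of three well-definedness verifications followed by one direct check of the defining axiom. The hypothesis \eqref{I0} that $K\trianglelefteq H$ and $L\trianglelefteq G$ guarantees at the outset that $H/K$ and $G/L$ are genuine groups, so that $\Aut(H/K)$ makes sense and the candidate maps $\overline{\phi}$ and $\overline{R}$ at least have the right source and target. It remains to show (i) that $\overline{\phi}$ is a well-defined homomorphism $G/L\to\Aut(H/K)$, (ii) that $\overline{R}$ is a well-defined map $H/K\to G/L$, and (iii) that the quadruple $(H/K,G/L,\overline{\phi},\overline{R})$ satisfies the relative Rota-Baxter identity. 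I would carry these out in this order, since (iii) is purely formal once (i) and (ii) are in place.

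For (i) there are two layers of well-definedness. First, for a fixed $g$, the assignment $\overline{h}\mapsto\overline{\phi_g(h)}$ is independent of the representative $h$: if $h_1K=h_2K$ then $h_1h_2^{-1}\in K$, and $\phi_g(h_1)\phi_g(h_2)^{-1}=\phi_g(h_1h_2^{-1})\in K$ by \eqref{I1}, so the two images agree modulo $K$. Applying \eqref{I1} to both $g$ and $g^{-1}$ yields $\phi_g(K)=K$, whence the induced map $\overline{\phi_g}$ is a bijective endomorphism, i.e. an element of $\Aut(H/K)$. The more delicate layer, and the point where \eqref{I2} is essential, is that $\overline{\phi}_{\overline{g}}$ depends only on the coset $gL$: writing a second representative as $g\ell$ with $\ell\in L$, one has $\phi_{g\ell}(h)\phi_g(h)^{-1}=\phi_g\big(\phi_\ell(h)h^{-1}\big)$, and $\phi_\ell(h)h^{-1}\in K$ by \eqref{I2} while $\phi_g(K)=K$, so this lies in $K$ and the two maps coincide on $H/K$. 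The homomorphism property $\overline{\phi}_{\overline{g_1}\,\overline{g_2}}=\overline{\phi}_{\overline{g_1}}\overline{\phi}_{\overline{g_2}}$ then follows immediately by projecting the corresponding identity for $\phi$.

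For (ii), the key is a substitution exploiting the Rota-Baxter relation. Given $h'=hk$ with $k\in K$, I would set $k'=\phi_{R(h)^{-1}}(k)$, which lies in $K$ by \eqref{I1}; then $h\,\phi_{R(h)}(k')=hk$, and the defining relation gives $R(hk)=R\big(h\,\phi_{R(h)}(k')\big)=R(h)R(k')$. Since $k'\in K$ and $R(K)\subseteq L$ (because $(K,L,\phi|,R|)$ is a relative Rota-Baxter subgroup), we get $R(k')\in L$, hence $R(hk)\in R(h)L$ and $\overline{R}$ is well-defined. Finally, (iii) is obtained by projecting the identity $R(h_1)R(h_2)=R\big(h_1\phi_{R(h_1)}(h_2)\big)$ to $G/L$: the left side becomes $\overline{R}(\overline{h_1})\,\overline{R}(\overline{h_2})$, while the right side becomes $\overline{R}\big(\overline{h_1}\,\overline{\phi}_{\overline{R(h_1)}}(\overline{h_2})\big)$, using the definitions of $\overline{\phi}$ and $\overline{R}$. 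The main obstacle is verifying (ii); the two-representative check in (i) that forces condition \eqref{I2} into play is the other subtle point, whereas the remaining steps are routine consequences of passing identities to the quotient.
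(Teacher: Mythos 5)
Your proposal is correct and complete: the two-layer well-definedness check for $\overline{\phi}$ (using \eqref{I1} for the $H/K$-variable and \eqref{I2} together with $\phi_g(K)=K$ for the $G/L$-variable), the substitution $k'=\phi_{R(h)^{-1}}(k)$ to get $R(hk)=R(h)R(k')\in R(h)L$, and the projection of the Rota-Baxter identity are exactly the right steps. The paper itself gives no proof of this statement—it is quoted as \cite[Theorem 5.3]{NM1}—but your argument is the standard one that proof follows, with no gaps.
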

\begin{no}
	We write $(H, G, \phi, R)/(K, L, \phi|, R|)$ to denote the quotient relative Rota-Baxter group $(H/K, G/L, \overline{\phi}, \overline{R})$.
\end{no}

\begin{defn}
	Let $(H, G, \phi, R)$ and $(K, L, \varphi, S)$ be two relative Rota-Baxter groups.
	\begin{enumerate}
		\item A homomorphism $(\psi, \eta): (H, G, \phi, R) \to (K, L, \varphi, S)$ of relative Rota-Baxter groups is a pair $(\psi, \eta)$, where $\psi: H \rightarrow K$ and $\eta: G \rightarrow L$ are group homomorphisms such that
		\begin{equation}\label{rbb datum morphism}
			\eta \; R = S \; \psi \quad \textrm{and} \quad \psi \; \phi_g =  \varphi_{\eta(g)}\psi
		\end{equation}
		for all $g \in G$.
		\item The kernel of a homomorphism $(\psi, \eta): (H, G, \phi, R) \to (K, L, \varphi, S)$ of relative Rota-Baxter groups is the quadruple $$(\Ker(\psi), \Ker(\eta), \phi|, R|),$$ where $\Ker(\psi)$ and $\Ker(\eta)$ denote the kernels of the group homomorphisms $\psi$ and $\eta$, respectively. The conditions in \eqref{rbb datum morphism} imply that the kernel is itself a relative Rota-Baxter group. In fact, the kernel turns out to be an ideal of $(H, G, \phi, R)$.
		
		\item The image of a homomorphism $(\psi, \eta):  (H, G, \phi, R) \to (K, L, \varphi, S)$ of relative Rota-Baxter groups is the quadruple 
		$$(\IM(\psi), \IM(\eta), \varphi|, S| ),$$ where $\IM(\psi)$ and $\IM(\eta)$ denote the images of the group homomorphisms $\psi$ and $\eta$, respectively. The image is itself a relative Rota-Baxter group.
		
		\item A homomorphism $(\psi, \eta)$ of relative Rota-Baxter groups is called an isomorphism if both $\psi$ and $\eta$ are group isomorphisms. Similarly, we say that $(\psi, \eta)$ is an embedding of a relative Rota-Baxter group if both $\psi$ and $\eta$ are embeddings of groups.
	\end{enumerate}
\end{defn}

\begin{defn}
	A Rota-Baxter group is a group $G$ together with a map $R: G \rightarrow G$ such that
	$$ R(x)  R(y)= R(x  R(x)  y  R(x)^{-1}) $$
	for all $x, y \in G$. The map $R$ is referred as the Rota-Baxter operator on $G$.
\end{defn}
Let $\phi : G \rightarrow \Aut(G)$ be the adjoint action, that is, $\phi_g(x)=gxg^{-1}$ for $g, h \in G$. Then the relative Rota-Baxter group $(G, G, \phi, R)$ is simply a Rota-Baxter group. For convenience, we shall often write a group $(G, \cdot)$ as $G^{(\cdot)}$.

\begin{prop}\label{R homo H to G} \cite[Proposition 3.5]{JYC22} 
	Let $(H, G, \phi, R)$ be a relative Rota-Baxter group. Then the operation 
	\begin{align}
		h_1 \circ_R h_2 = h_1 \phi_{R(h_1)}(h_2)
	\end{align}
	defines a group operation on $H$.  Moreover, the map $R: H^{(\circ_R)} \rightarrow G$ is a group homomorphism. The group $H^{(\circ_R)}$ is called the descendent group of $R$.
\end{prop}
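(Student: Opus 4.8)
The plan is to verify the three group axioms for $\circ_R$ directly from the defining relative Rota-Baxter identity $R(h_1)R(h_2) = R(h_1 \phi_{R(h_1)}(h_2))$, used together with the facts that $\phi$ is a homomorphism and that each $\phi_g$ is an automorphism of $H$. The homomorphism claim for $R$ will then drop out with no extra work.

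First I would record the normalisation $R(1_H) = 1_G$: putting $h_1 = h_2 = 1_H$ in the defining identity gives $R(1_H)^2 = R(1_H \phi_{R(1_H)}(1_H)) = R(1_H)$, whence $R(1_H) = 1_G$. This is exactly what is needed for $1_H$ to be a two-sided identity of $\circ_R$: indeed $h \circ_R 1_H = h\,\phi_{R(h)}(1_H) = h$ since every automorphism fixes $1_H$, while $1_H \circ_R h = 1_H\,\phi_{R(1_H)}(h) = \phi_{1_G}(h) = h$.

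The heart of the argument is associativity. Expanding $(h_1 \circ_R h_2) \circ_R h_3$ gives $h_1\, \phi_{R(h_1)}(h_2)\, \phi_{R(h_1 \phi_{R(h_1)}(h_2))}(h_3)$. Here I would use the defining identity to replace $R(h_1 \phi_{R(h_1)}(h_2))$ by $R(h_1)R(h_2)$, then use that $\phi$ is a homomorphism to write $\phi_{R(h_1)R(h_2)} = \phi_{R(h_1)}\phi_{R(h_2)}$, and finally use that $\phi_{R(h_1)}$ is an automorphism to factor it out of the product $\phi_{R(h_1)}(h_2)\,\phi_{R(h_1)}\big(\phi_{R(h_2)}(h_3)\big) = \phi_{R(h_1)}\big(h_2\,\phi_{R(h_2)}(h_3)\big)$. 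The expression then collapses to $h_1\, \phi_{R(h_1)}(h_2 \circ_R h_3) = h_1 \circ_R (h_2 \circ_R h_3)$. This interlocking use of all three structural hypotheses in a single computation is the step I expect to be the main obstacle; the remaining axioms are bookkeeping.

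For inverses, I would exhibit the right inverse $h' := \phi_{R(h)^{-1}}(h^{-1})$, which works because $\phi_{R(h)}\big(\phi_{R(h)^{-1}}(h^{-1})\big) = \phi_{R(h)R(h)^{-1}}(h^{-1}) = \phi_{1_G}(h^{-1}) = h^{-1}$, so that $h \circ_R h' = h\, \phi_{R(h)}(h') = h h^{-1} = 1_H$. I would then invoke the standard monoid fact that, in an associative structure with two-sided identity, a right inverse is automatically two-sided, which upgrades this to a genuine inverse and completes the proof that $(H, \circ_R)$ is a group. Finally, the homomorphism property of $R$ is just a rereading of the defining identity, namely $R(h_1 \circ_R h_2) = R(h_1 \phi_{R(h_1)}(h_2)) = R(h_1)R(h_2)$.
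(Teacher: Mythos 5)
Your proof is correct and is the standard direct verification; the paper itself states this result without proof (citing \cite[Proposition 3.5]{JYC22}), and your argument --- deriving $R(1_H)=1_G$, using the Rota--Baxter identity together with $\phi_{R(h_1)R(h_2)}=\phi_{R(h_1)}\phi_{R(h_2)}$ for associativity, exhibiting $\phi_{R(h)^{-1}}(h^{-1})$ as a right inverse, and reading off the homomorphism property from the defining identity --- is exactly the expected one.
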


It follows that if  $(H, G, \phi, R)$ is a relative Rota-Baxter group, then the image $R(H)$ of $H$ under $R$ is  a subgroup of $G$.

\begin{defn}
	The center of a relative Rota-Baxter group $(H, G, \phi, R)$ is defined as
	$$\Z(H, G, \phi, R)= \big( \Z^{\phi}_R(H), \Ker(\phi), \phi|, R| \big),$$
	where $\Z^{\phi}_{R}(H)=\Z(H)  \cap \Ker(\phi \,R) \cap \Fix(\phi)$, $\Fix(\phi)= \{ x \in H  \mid \phi_g(x)=x \mbox{ for all } g \in G \}$ and  $R: H^{(\circ_R)} \rightarrow G$ is viewed as a group homomorphism.
\end{defn}

Let us recall the definition of a skew left brace.

\begin{defn}
	A skew left brace is a  triple $(H,\cdot ,\circ)$, where $(H,\cdot)$ and $(H, \circ)$ are groups  such that
	$$a \circ (b \cdot c)=(a\circ b) \cdot a^{-1} \cdot (a \circ c)$$
	holds for all $a,b,c \in H$, where $a^{-1}$ denotes the inverse of $a$ in $(H, \cdot)$. The groups $(H,\cdot)$ and $(H, \circ)$ are called the additive and the multiplicative groups of the skew left brace $(H, \cdot, \circ)$.
\end{defn}

A skew left brace $(H, \cdot, \circ)$  is said to be  trivial  if $a \cdot b= a \circ b$ for all $a, b \in H$.

\begin{prop}\label{rrb2sb}\cite[Proposition 3.5]{NM1} 
	Let $(H, G, \phi, R)$ be a relative Rota-Baxter group. If $\cdot$ denotes the  group operation of $H$, then the triple $(H, \cdot, \circ_R)$ is a skew left brace. 
\end{prop}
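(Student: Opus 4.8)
The plan is to observe that both group structures required for a skew left brace are already in hand, so that the entire content of the proposition reduces to verifying the single compatibility axiom. The additive group $(H, \cdot)$ is the underlying group of $H$ by hypothesis, and Proposition \ref{R homo H to G} guarantees that $(H, \circ_R)$ is a group, namely the descendent group, with multiplication $a \circ_R b = a \cdot \phi_{R(a)}(b)$. Thus it remains only to check that for all $a, b, c \in H$ one has
$$a \circ_R (b \cdot c) = (a \circ_R b) \cdot a^{-1} \cdot (a \circ_R c),$$
where $a^{-1}$ denotes the inverse in $(H, \cdot)$.

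First I would expand the left-hand side using the definition of $\circ_R$ together with the crucial fact that $\phi_{R(a)}$ is a group automorphism of $(H, \cdot)$, since $\phi$ takes values in $\Aut(H)$. This gives $a \circ_R (b \cdot c) = a \cdot \phi_{R(a)}(b \cdot c) = a \cdot \phi_{R(a)}(b) \cdot \phi_{R(a)}(c)$. I would then expand the right-hand side directly: $(a \circ_R b) \cdot a^{-1} \cdot (a \circ_R c) = a \cdot \phi_{R(a)}(b) \cdot a^{-1} \cdot a \cdot \phi_{R(a)}(c)$, and the inner factor $a^{-1} \cdot a$ cancels to leave $a \cdot \phi_{R(a)}(b) \cdot \phi_{R(a)}(c)$. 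Comparing the two expressions finishes the verification.

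The computation makes clear that there is essentially no obstacle: the whole force of the skew left brace identity is precisely the statement that $\phi_{R(a)}$ respects the multiplication $\cdot$, that is, that it is a homomorphism of $(H, \cdot)$. Notice that the Rota-Baxter identity $R(h_1) R(h_2) = R(h_1 \phi_{R(h_1)}(h_2))$ plays no direct role in this final step; it has already been consumed in Proposition \ref{R homo H to G} to establish associativity of $\circ_R$ and the existence of $\circ_R$-inverses. Hence the only genuine ingredient is the automorphism property of $\phi$, and the proof is a short direct calculation.
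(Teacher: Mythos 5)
Your proof is correct and is essentially the argument one finds in the cited source: $(H,\circ_R)$ is a group by Proposition \ref{R homo H to G}, and the brace compatibility identity reduces, after expanding both sides and cancelling $a^{-1}\cdot a$, to the fact that $\phi_{R(a)}\in\Aut(H)$ preserves $\cdot$. The paper itself only cites \cite[Proposition 3.5]{NM1} without reproducing the proof, and your direct verification is exactly what that reference does, including the observation that the Rota--Baxter identity enters only through the group structure of $\circ_R$.
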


If $(H, G, \phi, R)$ is a relative Rota-Baxter group, then $(H, \cdot, \circ_R)$ is referred as the skew left brace induced by $R$ and will be denoted by $H_R$ for brevity. The following indispensable result is immediate \cite[Proposition 4.3]{NM1}.

\begin{prop}\label{rrb to slb homo}
A homomorphism of relative Rota-Baxter groups induces a homomorphism of induced skew left braces. 
\end{prop}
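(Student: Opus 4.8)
The plan is to show directly that if $(\psi, \eta): (H, G, \phi, R) \to (K, L, \varphi, S)$ is a homomorphism of relative Rota-Baxter groups, then the underlying map $\psi: H \to K$ is a homomorphism of skew left braces $H_R \to K_S$. Recall that a homomorphism of skew left braces is a map that is simultaneously a group homomorphism for the additive operations and for the multiplicative (circle) operations. Thus the task splits into two verifications, one of which is essentially free.

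First I would observe that the additive group of $H_R$ is precisely $(H, \cdot)$ and that of $K_S$ is $(K, \cdot)$, by the construction in Proposition \ref{rrb2sb}. Since $\psi: H \to K$ is by hypothesis a group homomorphism of the underlying groups, it is automatically a homomorphism with respect to the additive structures, so no further checking is required for the additive part.

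The substance of the argument is to confirm that $\psi$ respects the circle operations, namely that $\psi(h_1 \circ_R h_2) = \psi(h_1) \circ_S \psi(h_2)$ for all $h_1, h_2 \in H$. I would expand the left-hand side using the definition $h_1 \circ_R h_2 = h_1 \, \phi_{R(h_1)}(h_2)$ from Proposition \ref{R homo H to G} together with the multiplicativity of $\psi$ for $\cdot$, obtaining $\psi(h_1) \cdot \psi\big(\phi_{R(h_1)}(h_2)\big)$. The two compatibility conditions in \eqref{rbb datum morphism}, namely $\psi \, \phi_g = \varphi_{\eta(g)} \, \psi$ and $\eta \, R = S \, \psi$, then let me rewrite the second factor as
$$\psi\big(\phi_{R(h_1)}(h_2)\big) = \varphi_{\eta(R(h_1))}\big(\psi(h_2)\big) = \varphi_{S(\psi(h_1))}\big(\psi(h_2)\big),$$
where the first equality applies the intertwining relation with $g = R(h_1)$ and the second uses $\eta(R(h_1)) = S(\psi(h_1))$. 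This is exactly the second factor appearing in $\psi(h_1) \circ_S \psi(h_2) = \psi(h_1) \cdot \varphi_{S(\psi(h_1))}(\psi(h_2))$, completing the verification.

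There is no genuine obstacle here: the statement is a formal consequence of the two axioms defining a morphism of relative Rota-Baxter groups, and the proof simply threads those two identities through the definition of the circle operation. The only point demanding care is the bookkeeping of which automorphism ($\phi$ versus $\varphi$) and which Rota-Baxter operator ($R$ versus $S$) is in play at each step, so that the compatibility relations are invoked with the correct arguments.
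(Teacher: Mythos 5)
Your proof is correct and is precisely the ``immediate'' verification the paper has in mind (it cites the result from an earlier reference without reproducing the argument): the additive part is the hypothesis that $\psi$ is a group homomorphism, and the multiplicative part follows by threading the two conditions of \eqref{rbb datum morphism} through the definition of $\circ_R$, exactly as you do. No issues.
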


In fact, the association $(H, \cdot, \circ_R) \mapsto H_R$ is a functor from the category of relative Rota-Baxter groups to that of skew left braces. 
\medskip

\section{Central Extensions and Second cohomology} \label{sec extensions and cohomology RRB}
In this section, we revisit extensions and second cohomology of algebraic structures related to Rota-Baxter groups and explore their connections.

\subsection{Central extensions and second cohomology of groups } We recall the particular case of the connection between group cohomology and abelian extensions. For additional details, see \cite{MR0672956} and  \cite[Chapter 2]{PSY18}.

\begin{defn}
	A central extension $\mathcal{E}$ of a group $A$ by an abelian group $K$ is a short exact sequence groups
	$$ \mathcal{E}: \quad {\bf 1} \longrightarrow K \stackrel{i}{\longrightarrow}  H \stackrel{\pi}{\longrightarrow} A \longrightarrow {\bf 1}$$
	such that $K \subseteq \Z(H)$.
\end{defn}

\medskip

Let $\Z^2_{Gp}(A, K)$ be the group of 2-cocycles, which consists of maps $\tau:A \times A \to K$ that vanish on degenerate tuples, that is, on tuples $(a,b)$ in which either $a=1$  or  $b=1$
and satisfy
$$\tau(a_2, a_3) \tau(a_1 a_2, a_3)^{-1} \tau(a_1, a_2 a_3) \tau(a_1, a_2)^{-1}=1$$
for $a_1, a_2, a_3 \in A.$ It is well-known and easy to see that each $\tau \in \Z^2_{Gp}(A, K)$ induces a group structure $A \times_{\tau_1} K$ on the set $A \times K$  defined by the binary operation
$$(a_1, k_1) (a_2, k_2)=(a_1 a_2, k_1k_2\tau(a_1, a_2))$$
for $a_1, a_2 \in A$ and $k_1, k_2 \in K.$
\par 
Let $\B^2_{Gp}(A, K)$ be the group of 2-coboundaries, which is the subgroup of $\Z^2_{Gp}(A, K)$ consisting of maps $\tau$ for which there exists a map $\theta: A \rightarrow K$ such that
$$\tau(a_1, a_2)=\partial^1(\theta):=\theta(a_2) \theta(a_1 a_2)^{-1} \theta(a_1)$$
for all $a_1, a_2 \in A$.  Then the second cohomology group of $A$ with coefficients in $K$ is defined by 
$$\Ho^2_{Gp}(A,K):=\Z^2_{Gp}(A, K)/ \B^2_{Gp}(A, K).$$

Let $\Ext_{Gp}(A,K)$ denotes the set of equivalence classes of all central extensions of  $A$ by $K$. Then the following result is well-known.
\begin{thm}
	The map $\Psi_1: \Ho^2_{Gp}(A,K) \longrightarrow \Ext_{Gp}(A,K)$ defined by $\Psi_1 \big([\tau] \big)=[\mathcal{E}(\tau)]$ is a bijection, where 
	$$ \mathcal{E}(\tau): \quad {\bf 1} \longrightarrow K \stackrel{i}{\longrightarrow}  A \times_{\tau} K  \stackrel{\pi}{\longrightarrow} A \longrightarrow {\bf 1},$$
and the maps $i$ and $ \pi$ represent the natural injection and the natural projection, respectively.
\end{thm}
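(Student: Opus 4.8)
The plan is to establish that $\Psi_1$ is both well-defined and bijective by exhibiting the inverse correspondence, which sends a central extension to the factor set of a chosen normalized section. First I would verify well-definedness: if $\tau, \tau' \in \Z^2_{Gp}(A,K)$ are cohomologous, say $\tau'(a_1,a_2) = \tau(a_1,a_2)\,\partial^1(\theta)(a_1,a_2)$ for some normalized map $\theta: A \to K$, then the assignment $(a,k) \mapsto (a, k\,\theta(a)^{-1})$ defines a group isomorphism $A \times_{\tau} K \to A \times_{\tau'} K$, using crucially that $K$ is abelian. This isomorphism restricts to the identity on $K$ and descends to the identity on $A$, so it fits into a commutative ladder exhibiting $\mathcal{E}(\tau)$ and $\mathcal{E}(\tau')$ as equivalent extensions; hence $\Psi_1$ is well-defined on cohomology classes.

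For surjectivity, I would start from an arbitrary central extension $\mathcal{E}: {\bf 1} \longrightarrow K \stackrel{i}{\longrightarrow} H \stackrel{\pi}{\longrightarrow} A \longrightarrow {\bf 1}$ and choose a normalized set-theoretic section $s: A \to H$, so that $\pi s = \Id_A$ and $s(1)=1$. Since $\pi\big(s(a_1)s(a_2)s(a_1a_2)^{-1}\big)=1$, the element $s(a_1)s(a_2)s(a_1a_2)^{-1}$ lies in $i(K)$, and I define $\tau(a_1,a_2)\in K$ by $i(\tau(a_1,a_2)) = s(a_1)s(a_2)s(a_1a_2)^{-1}$. Normalization of $s$ makes $\tau$ vanish on degenerate tuples, and associativity of multiplication in $H$ forces the $2$-cocycle identity; here centrality of $K$ is exactly what removes any twisting term, so that $\tau \in \Z^2_{Gp}(A,K)$. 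The map $(a,k) \mapsto i(k)\,s(a)$ is then a group isomorphism $A \times_{\tau} K \to H$ realizing an equivalence $\mathcal{E}(\tau) \sim \mathcal{E}$, whence $[\mathcal{E}] = \Psi_1([\tau])$.

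For injectivity I would reverse the well-definedness computation: an equivalence $\Phi: A\times_{\tau}K \to A\times_{\tau'}K$ compatible with $i$ and $\pi$ must have the form $\Phi(a,k) = (a, k\,\theta(a))$ for a unique normalized $\theta: A \to K$, and comparing $\Phi$ applied to a product with the product of the $\Phi$-images shows that $\tau$ and $\tau'$ differ by the coboundary $\partial^1(\theta^{-1})$, so $[\tau]=[\tau']$. The bulk of the work is routine bookkeeping, namely checking that the section-derived factor set satisfies the normalized cocycle identity and that each comparison map is multiplicative. I do not expect a genuine obstacle: once centrality is invoked to trivialize the conjugation action of $A$ on $K$, every computation is direct, and the only point requiring care is keeping all coboundary manipulations consistent with the fixed convention $\partial^1(\theta)(a_1,a_2) = \theta(a_2)\theta(a_1a_2)^{-1}\theta(a_1)$.
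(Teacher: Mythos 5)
Your proposal is correct and is the standard textbook argument (normalized section $\mapsto$ factor set, and the comparison map $(a,k)\mapsto(a,k\theta(a)^{-1})$ for cohomologous cocycles); the paper states this theorem as well-known and supplies no proof, so there is nothing to diverge from. All the computations you defer are indeed routine given that $K$ is central, and your conventions match the paper's (note that centrality makes your factor set $s(a_1)s(a_2)s(a_1a_2)^{-1}$ literally equal to the paper's $s(a_1a_2)^{-1}s(a_1)s(a_2)$ from its equation for $\tau_1$).
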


\subsection{Annhilator extensions and second cohomology of skew left braces} 
In the category of skew left braces, the counterparts of central extensions are known as annihilator extensions (see \cite{CMP, NMY1}).

Let $(H, \cdot, \circ)$ be a skew left brace, $\Z(H^{(\cdot)})$ denote the centre of the group $H^{(\cdot)}$ and $\Fix(\lambda) = \{x \in H \mid \lambda_a(x) = x \quad \textrm{for all} \quad  a\in H \}$.  Then the annihilator $\Ann(H)$ of $(H,\cdot ,\circ)$ is defined as $$\Ann(H)=\Ker(\lambda) \cap \Z(H^{(\cdot)}) \cap \Fix(\lambda)=\{a \in H \mid b \circ a = a \circ b = b \cdot a = a \cdot b \quad \textrm{for all} \quad b \in H\}.$$ Clearly, $\Ann(H)$ is an ideal of  $(H, \cdot, \circ)$. We now introduce the definition of an annihilator extension for skew left braces.

\begin{defn}
	An annhilator extension $\mathcal{E}$ of a skew left brace $(M, \cdot, \circ)$ by an abelian group $I$ viewed as a trivial  brace is  a short exact sequence skew left braces
	$$ \mathcal{E}: \quad {\bf 1} \longrightarrow I \stackrel{i}{\longrightarrow}  E \stackrel{\pi}{\longrightarrow} M \longrightarrow {\bf 1}$$
	such that $I \subseteq \Ann(E)$.
\end{defn}

Let $\Z^2_{SLB}(M,I)$ be the subgroup of $\Z^2_{Gp}(M^{(\cdot)}, I) \bigoplus \Z^2_{Gp}(M^{(\circ)}, I)$ consisting of pairs $(\tau, \tilde{\tau})$ which satisfy the compatibility condition
$$\tau(m_2, m_3) \tilde{\tau}(m_1, m_2 \cdot m_3) \tau(m^{-1}_1, m_1)= \tilde{\tau}(m_1, m_3) \tau(m_1 \circ m_2, m^{-1}_1) \tau((m_1 \circ m_2)\cdot m^{-1}_1, m_1 \circ m_3 ) \tilde{\tau}(m_1, m_2)$$
for all $m_1, m_2,m_3 \in M.$ 

It follows from \cite[Theorem 3.4]{NMY1} that each $(\tau, \tilde{\tau}) \in \Z^2_{SLB}(M,I)$ defines a skew left brace structure $(M \times_{(\tau, \tilde{\tau})} I, \cdot, \circ)$ on the set $M \times I$  given by 
\begin{align*}
	(m_1, y_1) \cdot (m_2, y_2)=&\; (m_1 \cdot m_2, \, y_1y_2 \tau(m_1, m_2)),\\
	(m_1, y_1) \circ (m_2, y_2)=& \;(m_1 \circ m_2, \, y_1y_2\tilde{\tau}(m_1, m_2))
\end{align*} 
for all $m_1, m_2 \in M$ and $y_1, y_2 \in M.$
\par 
Let $\B^2_{SLB}(M, I)$ be the group of 2-coboundaries, which is a subgroup of $\Z^2_{SLB}(M, I)$ consisting of  $(\tau, \tilde{\tau})$ for which there exists a map $\theta: M \rightarrow I$ such that
\begin{align*}
	\tau(m_1, m_2) = \partial^1(\theta) =&\; \theta(m_2) \theta(m_1 \cdot m_2)^{-1} \theta(m_1),\\
	\tilde{\tau}(m_1, m_2)) = \partial^1(\theta) =& \;\theta(m_2) \theta(m_1 \circ m_2)^{-1} \theta(m_1)
\end{align*}
for all $m_1, m_2 \in M$.  Then the second cohomology group of the skew left brace $M$ with coefficients in $I$ is defined by 
$$\Ho^2_{SLB}(M,I):=\Z^2_{SLB}(M, I)/ \B^2_{SLB}(M, I).$$
\par
Let $\Ext_{SLB}(M,I)$ denote the set of equivalence classes of all annihilator extensions of the skew left brace $M$ by the abelian group $I$. Then the following result is known due to \cite[Theorem 3.6]{NMY1}.

\begin{thm}
	The map $\Psi_2: \Ho^2_{SLB}(M,I) \longrightarrow \Ext_{SLB}(M,I)$ given by 
	$$ \Psi_2 \big([(\tau, \tilde{\tau})] \big) =[\mathcal{E}(\tau, \tilde{\tau})],$$ 
	where 
	$$ \mathcal{E}(\tau, \tilde{\tau}): \quad {\bf 1} \longrightarrow I \stackrel{i}{\longrightarrow}  M \times_{(\tau, \tilde{\tau})} I  \stackrel{\pi}{\longrightarrow} M \longrightarrow {\bf 1},$$
is a bijection and the maps $i$ and $ \pi$ represent the natural injection and the natural projection, respectively.
\end{thm}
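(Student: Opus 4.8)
The plan is to build an explicit inverse to $\Psi_2$ by the classical method of sections, and then to check that the two composites are the identity. Given an annihilator extension $\mathcal{E}\colon {\bf 1} \to I \xrightarrow{i} E \xrightarrow{\pi} M \to {\bf 1}$, I would fix a normalized set-theoretic section $s\colon M \to E$ of $\pi$ with $s(1)=1$. Exactness of the additive sequence lets me write every $e \in E$ uniquely as $e = s(\pi(e)) \cdot \rho(e)$ with $\rho(e) := s(\pi(e))^{-1}\cdot e \in I$, and I would then define $\tau$ and $\tilde\tau$ implicitly by
$$ s(m_1)\cdot s(m_2) = s(m_1\cdot m_2)\cdot \tau(m_1,m_2), \qquad s(m_1)\circ s(m_2) = s(m_1\circ m_2)\circ \tilde\tau(m_1,m_2), $$
where $\tau(m_1,m_2),\tilde\tau(m_1,m_2) \in I$ since $\pi$ is a homomorphism for both operations and $I$ is its kernel. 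The normalization $s(1)=1$ forces both cochains to vanish on degenerate tuples.

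The verification then splits into three parts. First, $\tau \in \Z^2_{Gp}(M^{(\cdot)},I)$ and $\tilde\tau \in \Z^2_{Gp}(M^{(\circ)},I)$: these are the ordinary $2$-cocycle identities, obtained by expanding $s(m_1)\cdot s(m_2)\cdot s(m_3)$ (respectively with $\circ$) in two ways and using that $I \subseteq \Ann(E)$ is central in $E^{(\cdot)}$, exactly as in the correspondence between $\Ho^2_{Gp}$ and central extensions recalled above, applied separately to the groups $E^{(\cdot)}$ and $E^{(\circ)}$. Second, the compatibility condition relating $\tau$ and $\tilde\tau$: I would feed the lifts $s(m_1),s(m_2),s(m_3)$ into the skew left brace identity $a\circ(b\cdot c) = (a\circ b)\cdot a^{-1}\cdot(a\circ c)$ holding in $E$, substitute the two defining relations above, and collect the resulting $I$-valued correction terms, moving them freely past the products and the $\lambda$-action by invoking all three defining properties of $\Ann(E)$, namely centrality in $E^{(\cdot)}$, membership in $\Ker\lambda$, and $\lambda$-fixedness. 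This should produce precisely the stated compatibility identity, so that $(\tau,\tilde\tau)\in \Z^2_{SLB}(M,I)$.

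For well-definedness, I would note that a coboundary $\partial^1(\theta)$ transforms $\mathcal{E}(\tau,\tilde\tau)$ into an equivalent extension through an explicit isomorphism determined by $\theta$, so $\Psi_2$ is well defined; conversely, replacing $s$ by a second normalized section $s'(m)=s(m)\cdot\theta(m)$ changes the extracted pair $(\tau,\tilde\tau)$ by the coboundary $\partial^1(\theta)$ in each coordinate, and an equivalence of extensions transports a section of one to a section of the other, so the assignment $\Phi\colon[\mathcal{E}]\mapsto[(\tau,\tilde\tau)]$ is well defined on equivalence classes. Then $\Phi\circ\Psi_2=\Id$ is immediate by taking $s(m)=(m,1)$ in $M\times_{(\tau,\tilde\tau)}I$ and using that $\tau,\tilde\tau$ vanish on degenerate tuples, while $\Psi_2\circ\Phi=\Id$ follows by checking that $E \to M\times_{(\tau,\tilde\tau)}I$, $e\mapsto(\pi(e),\rho(e))$, is an isomorphism of skew left braces fitting into a commutative ladder with identities on $I$ and $M$; here the additive and multiplicative cocycle relations verified above are exactly what make this map respect both $\cdot$ and $\circ$.

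The main obstacle is the verification of the compatibility condition, as it is the only step that couples the two group structures; everything else factors through the classical group-theoretic correspondence applied to $E^{(\cdot)}$ and $E^{(\circ)}$ in turn. The bookkeeping there is delicate precisely because one must use all three constituents of the annihilator simultaneously to commute the $I$-valued terms past the $\lambda$-action and both operations, and a careful tracking of the normalization $s(1)=1$ is what guarantees that the resulting pair lies in $\Z^2_{SLB}(M,I)$ rather than merely satisfying the identity off the degenerate locus.
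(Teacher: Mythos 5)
The paper does not prove this statement: it is recalled verbatim from \cite[Theorem 3.6]{NMY1}, so there is no in-paper proof to compare against. Your outline is the standard section-based correspondence that the cited proof follows, and it is sound: in particular, your plan for the compatibility identity does work, since feeding $s(m_1),s(m_2),s(m_3)$ into $a\circ(b\cdot c)=(a\circ b)\cdot a^{-1}\cdot(a\circ c)$, writing $s(m_1)^{-1}=\tau(m_1^{-1},m_1)^{-1}\cdot s(m_1^{-1})$, and pushing the $I$-valued terms to one side (legitimate because elements of $\Ann(E)$ commute with everything under both operations and satisfy $x\cdot k=x\circ k$, which also justifies your writing $\circ$ between $s(m_1\circ m_2)$ and $\tilde\tau(m_1,m_2)$) yields exactly the stated relation among $\tau$ and $\tilde\tau$.
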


\subsection{Central extensions and second cohomology of relative Rota-Baxter groups}

In this subsection, we recall central extensions of relative Rota-Baxter groups and relevant basic results  from \cite{BRS2023}.

\begin{defn}
Let  $\mathcal{A}=(A,B, \beta, T)$  be a relative Rota-Baxter group and  $\mathcal{K}=(K,L, \alpha,S )$ a trivial relative Rota-Baxter group. A central extension $\mathcal{E}$ of  $\mathcal{A}$ by   $\mathcal{K}$ is a short exact sequence of relative Rota-Baxter groups
	$$\mathcal{E} : \quad  {\bf 1} \longrightarrow (K,L, \alpha,S ) \stackrel{(i_1, i_2)}{\longrightarrow}  (H,G, \phi, R) \stackrel{(\pi_1, \pi_2)}{\longrightarrow} (A,B, \beta, T) \longrightarrow {\bf 1}$$ 
	such that $K \leq \Z^{\phi}_{R}(H)$ and $L \leq \Z(G) \cap \Ker(\phi)$.
\end{defn}

\begin{remark}
It is easy to see that a central extension $\mathcal{E}$ of relative Rota-Baxter groups induces two central extensions of groups
	$$\mathcal{E}_1 : \quad  1 \longrightarrow K \stackrel{i_1}{\longrightarrow}  H \stackrel{\pi_1 }{\longrightarrow} A \longrightarrow 1 \quad \textrm{and} \quad \mathcal{E}_2 : \quad  1 \longrightarrow L \stackrel{i_2}{\longrightarrow}  G \stackrel{\pi_2 }{\longrightarrow} B \longrightarrow 1.$$
\end{remark}

\begin{remark}
In view of Proposition \ref{rrb to slb homo}, it follows that every central extension $\mathcal{E}$ of relative Rota-Baxter groups induces an annhilator extension
	$$\mathcal{E}_{SLB}: \quad  {\bf 1} \longrightarrow K_{S} \stackrel{i_1}{\longrightarrow}  H_{R} \stackrel{\pi_1}{\longrightarrow} A_{T} \longrightarrow {\bf 1}$$
	of induced skew left braces.
\end{remark}

Let  $\mathcal{A}:=(A,B, \beta, T)$  be a relative Rota-Baxter group and  $\mathcal{K}:=(K,L, \alpha,S )$  a trivial relative Rota-Baxter group such that $K$ and $L$ are abelian groups. We can think of  $\mathcal{K}$ as a trivial $\mathcal{A}$-module (see \cite[Definition 3.12]{BRS2023}). Let $C(A \times B, K)$ be the group of all maps from $A \times B$ to $K$  which vanish on degenerate tuples. Similarly, let $C(A, L)$ denote the group of all maps from $A$ to $L$. Let 
\begin{align*}
	\Z_{RRB}^2(\mathcal{A}, \mathcal{K}):=\Bigg\{(\tau_1 ,\tau_2, \rho, \chi) \hspace{.1cm} \Big \vert  \hspace{.1cm}  \substack{ (\tau_1 ,\tau_2) \in \Z^2_{Gp}(A, K) \bigoplus \Z^2_{Gp}(B,L )\mbox{ and }  (\rho, \chi) \in  C(A \times B, K) \bigoplus C(A, L) \\ \mbox{ such that conditions } \eqref{RRBC1}, \eqref{RRBC2} \mbox{ and } \eqref{RRBC3} \mbox{ hold for all }\\  
	a_1, a_2 \in A \mbox{ and } b_1, b_2 \in B}  \Bigg\},
\end{align*}
where
\begin{eqnarray}
	\rho(a_1, b_1 b_2)  \,  (\rho(\beta_{b_2}(a_1), b_1))^{-1} \,(\rho(a_1,b_2))^{-1} &=&1, \label{RRBC1}\\
	\rho(a_1 a_2, b_1) \,(\rho(a_1,b_1))^{-1}(\rho(a_2, b_1))^{-1} \tau_1(a_1, a_2) \, (\tau_1(\beta_{b_1}(a_1), \beta_{b_1}(a_2)))^{-1}&=&1, \label{RRBC2} \\
	S\big(\rho(a_2, T(a_1)) \,\tau_1(a_1,\beta_{T(a_1)}(a_2)) \big) \, (\tau_2(T(a_1), T(a_2)))^{-1} \,(\partial^1(\chi)(a_1, a_2))^{-1}&=&1.\label{RRBC3}
\end{eqnarray}
It follows from \cite[ Proposition 3.8 and Proposition 3.10]{BRS2023} that every $(\tau_1 ,\tau_2, \rho, \chi) \in \Z^2_{RRB}(\mathcal{A}, \mathcal{K})$ defines a relative Rota-Baxter group $(A \times_{\tau_1} K, B \times_{\tau_2} L, \phi, R)$, where 
\begin{enumerate}
	\item the action $\phi: B \times_{\tau_2} L \longrightarrow \Aut(A \times_{\tau_1} K )$ is defined by
	\begin{eqnarray}\label{phidefnprop}
		\phi_{(b,l)}(a,k) &= & \big(\beta{_{b}(a)}, ~\rho(a,b)\;k\big),
	\end{eqnarray} 
	\item the relative Rota-Baxter operator   $R:  A \times_{\tau_1} K \longrightarrow B \times_{\tau_2} L$ is given by
	\begin{eqnarray}\label{Rformula}
		R(a,k) &= & \big(T(a), ~\chi(a) \,S(k)\big)
	\end{eqnarray} 
\end{enumerate}
for $(a,k) \in A \times_{\tau_1} K$ and $(b,l) \in B \times_{\tau_2} L$.

Let $\B^2_{RRB}(\mathcal{A}, \mathcal{K})$ denote the subgroup of $\Z^2_{RRB}(\mathcal{A}, \mathcal{K})$  comprising elements $(\tau_1, \tau_2, \rho, \chi)$ that are associated with maps $\theta_1: A \rightarrow K$ and $\theta_2: B \rightarrow L$ such that the following conditions are satisfied:
\begin{align}
	\tau_1(a_1, a_2) =&\; \partial^1(\theta_1)(a_1, a_2),\label{t1}\\
	\tau_2(b_1, b_2) =&\;   \partial^1(\theta_2)(b_1, b_2), \label{t2}\\
	\rho(a_1, b_1)= &\lambda_1(a_1, b_1):=  \theta_1(a_1) \,(\theta_1(\beta_{b_1}(a_1)))^{-1}, \label{t3}\\
	\chi(a_1)=& \lambda_2(a_1):= S(\theta_1(a_1)) \,(\theta_2(T(a_1)))^{-1}\label{t4}
\end{align}
for all $a_1, a_2 \in A$ and $b_1, b_2 \in B.$ Then the second cohomology of the relative Rota-Baxter group $\mathcal{A}$ with coefficients in $\mathcal{K}$ is defined by 
$$\Ho^2_{RRB}(\mathcal{A}, \mathcal{K}):=\Z^2_{RRB}(\mathcal{A},  \mathcal{K})/ \B^2_{RRB}(\mathcal{A},  \mathcal{K}).$$

Let $\CExt_{RRB}(\mathcal{A}, \mathcal{K})$ denote the set of equivalence classes of all central extensions of $\mathcal{A}$ by  $\mathcal{K}$.  Then we have the following result from \cite[Theorem 5.1]{BRS2023}

\begin{thm}
	The map $\Psi_3: \Ho^2_{RRB}(\mathcal{A}, \mathcal{K}) \longrightarrow \CExt_{RRB}(\mathcal{A}, \mathcal{K})$ defined by 
	$$ \Psi_3 \big( [(\tau_1, \tau_2, \rho, \chi)] \big)=[\mathcal{E}(\tau_1, \tau_2, \rho, \chi)]$$
	is a bijection, where $\mathcal{E}(\tau_1, \tau_2, \rho, \chi)$ represents the extension
	$$ \quad {\bf 1} \longrightarrow (K,L, \alpha,S ) \stackrel{(i_1, i_2)}{\longrightarrow}  (A \times_{\tau_1} K,B \times_{\tau_2} L, \phi, R) \stackrel{(\pi_1, \pi_2)}{\longrightarrow} (A,B, \beta, T) \longrightarrow {\bf 1}$$ 
	and the maps $\phi$ and $R$ are defined in \eqref{phidefnprop} and \eqref{Rformula}, respectively.
\end{thm}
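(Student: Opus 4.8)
The plan is to follow the classical three-step template establishing a bijection between a second cohomology group and equivalence classes of extensions, exactly as already carried out for groups and skew left braces (the maps $\Psi_1$ and $\Psi_2$ recorded above), while carefully incorporating the two extra pieces of data $\rho$ and $\chi$ that encode the compatibility of a set-theoretic section with the action $\beta$ and the operator $T$. First I would verify that $\Psi_3$ is well defined, then that it is surjective, and finally that it is injective.

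For surjectivity, given a central extension $\mathcal{E}$ of $\mathcal{A}$ by $\mathcal{K}$, I would fix normalized set-theoretic sections $s_1 : A \to H$ and $s_2 : B \to G$ of $\pi_1$ and $\pi_2$ with $s_1(1)=1$ and $s_2(1)=1$, and read off four functions: the group $2$-cocycles $\tau_1(a_1,a_2)=s_1(a_1)s_1(a_2)s_1(a_1a_2)^{-1} \in K$ and $\tau_2(b_1,b_2)=s_2(b_1)s_2(b_2)s_2(b_1b_2)^{-1} \in L$ of the induced extensions $\mathcal{E}_1$ and $\mathcal{E}_2$, together with $\rho(a,b)=\phi_{s_2(b)}(s_1(a))\,s_1(\beta_b(a))^{-1} \in K$, measuring the failure of $s_1$ to intertwine $\phi$ and $\beta$, and $\chi(a)=R(s_1(a))\,s_2(T(a))^{-1} \in L$, measuring the failure of $s$ to intertwine $R$ and $T$. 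The crux is then to check that $(\tau_1,\tau_2,\rho,\chi)$ lies in $\Z^2_{RRB}(\mathcal{A},\mathcal{K})$: conditions \eqref{RRBC1} and \eqref{RRBC2} fall out of the homomorphism property $\phi_{g_1g_2}=\phi_{g_1}\phi_{g_2}$ and the fact that each $\phi_g$ is an automorphism of $H$, using repeatedly that $K \leq \Z(H) \cap \Fix(\phi)$ and $L \leq \Ker(\phi)$ to slide $K$-valued and $L$-valued factors past the action. Condition \eqref{RRBC3} is the delicate one: I would apply the defining relation $R(h_1)R(h_2)=R(h_1\circ_R h_2)$ of Proposition \ref{R homo H to G} to $h_i=s_1(a_i)$, expand $s_1(a_1)\circ_R s_1(a_2)=s_1(a_1)\,\phi_{R(s_1(a_1))}(s_1(a_2))$, then use $K \leq \Ker(\phi R)$ to replace the $H^{(\cdot)}$-product by the $\circ_R$-product on the emerging $K$-valued factor so that the homomorphism property of $R$ on the descendent group $H^{(\circ_R)}$ applies, and finally use the identity $R\,i_1=i_2\,S$ to rewrite $R$ of that factor as $S$, all the while recognising $a_1\beta_{T(a_1)}(a_2)=a_1\circ_T a_2$ and $T(a_1\circ_T a_2)=T(a_1)T(a_2)$. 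To close surjectivity I would confirm that $(a,k)\mapsto s_1(a)\,i_1(k)$ and $(b,l)\mapsto s_2(b)\,i_2(l)$ define an isomorphism of relative Rota-Baxter groups exhibiting $\mathcal{E}(\tau_1,\tau_2,\rho,\chi)\sim\mathcal{E}$.

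For well-definedness and injectivity, I would show that two cocycles yield equivalent extensions precisely when they differ by an element of $\B^2_{RRB}(\mathcal{A},\mathcal{K})$. Given $\theta_1:A\to K$ and $\theta_2:B\to L$, the pair $(\Theta_1,\Theta_2)$ with $\Theta_1(a,k)=(a,\theta_1(a)k)$ and $\Theta_2(b,l)=(b,\theta_2(b)l)$ is the candidate equivalence, and demanding that it be a homomorphism of relative Rota-Baxter groups respecting $\phi$ and $R$, restricting to the identity on $\mathcal{K}$ and inducing the identity on $\mathcal{A}$, translates exactly into the four coboundary relations \eqref{t1}--\eqref{t4}; conversely any equivalence is forced to have this form because it fixes $\mathcal{K}$ and covers the identity of $\mathcal{A}$. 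This simultaneously gives well-definedness of $\Psi_3$ (coboundaries produce equivalent extensions) and injectivity (equivalent extensions produce cohomologous cocycles).

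The step I expect to be the main obstacle is the verification of \eqref{RRBC3}, together with its coboundary analogue governed by \eqref{t3}--\eqref{t4}, since it is the only place where the non-homomorphic operator $R$ on $H^{(\cdot)}$ intervenes; the bookkeeping must be routed through $H^{(\circ_R)}$, on which $R$ is a homomorphism, and one must track carefully which factors lie in $K$ versus $L$ and invoke the precise membership conditions $K\leq \Z^{\phi}_R(H)$ and $L\leq \Z(G)\cap\Ker(\phi)$ at each cancellation. The remaining cocycle and coboundary identities, although notationally heavy, are routine consequences of the group-theoretic facts already recorded in the statements for $\Psi_1$ and $\Psi_2$.
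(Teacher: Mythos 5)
Your proposal is correct and follows the standard section--cocycle correspondence; note that the paper itself does not prove this theorem but merely recalls it from \cite[Theorem 5.1]{BRS2023}, so there is no in-paper proof to diverge from. Your extracted formulas for $(\tau_1,\tau_2,\rho,\chi)$ agree (up to reordering of central factors) with the ones the paper records in \eqref{c1}--\eqref{c4}, and your routing of the verification of \eqref{RRBC3} through the descendent group $H^{(\circ_R)}$ --- writing $s_1(a_1)\phi_{R(s_1(a_1))}(s_1(a_2))=s_1(a_1\circ_T a_2)\,\tau_1(a_1,\beta_{T(a_1)}(a_2))\,\rho(a_2,T(a_1))$, absorbing the $K$-valued factor via $K\leq \Fix(\phi)$ so that the ordinary product coincides with $\circ_R$, and then applying $R|_K=S$ --- is exactly the computation that makes that condition come out, as does your identification of equivalences of extensions with the coboundary relations \eqref{t1}--\eqref{t4} via $\Theta_1(a,k)=(a,\theta_1(a)k)$, $\Theta_2(b,l)=(b,\theta_2(b)l)$.
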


\subsection{Connection between the three cohomology groups}
The connection between the second cohomology of skew left braces and the second cohomology of groups has been investigated in \cite{TV23}.  Let  $\mathcal{A}=(A,B, \beta, T)$  be a relative Rota-Baxter group and  $\mathcal{K}=(K,L, \alpha,S )$  a trivial relative Rota-Baxter group such that $K$ and $L$ are abelian groups, and  $\mathcal{K}$ is viewed as a trivial $\mathcal{A}$-module. It follows from \cite[Proposition 4.4]{BRS2023} that the map $\Pi_1: \Ho^2_{RRB}(\mathcal{A}, \mathcal{K}) \longrightarrow \Ho^2_{SLB}(A_T,K_S)$ defined by 
$$ \Pi_1 \big( [(\tau_1, \tau_2, \rho, \chi)]\big)=[(\tau_1, \tau^{(\beta, T)}_1\; \rho^T)]$$
is a group homomorphism, where
\begin{align*}
	\tau^{(\beta, T)}_1(a_1, a_2)=&\; \tau_1(a_1, \beta_{T(a_1)}(a_2)),\\
	\rho^T(a_1, a_2)=&\; \rho(a_2, T(a_1))
\end{align*}
for all $a_1, a_2 \in A.$ Now $[(\tau_1, \tau_2, \rho, \chi)] \in \ker (\Pi_1)$ if there exists a map $\theta: A \rightarrow K$ such that
\begin{align}
	\tau_1(a_1, a_2)=&\; \theta(a_2)\; \theta(a_1 \cdot a_2)^{-1}\; \theta(a_1),\label{ker1}\\
	\tau_1(a_1, \beta_{T(a_1)}(a_2)) \;  \rho(a_2, T(a_1))=&\;  \theta(a_2)\; \theta(a_1 \circ_T a_2)^{-1}\; \theta(a_1) \label{ker2}
\end{align}
for all $a_1, a_2 \in A.$ Using the value of $\tau_1$ from  \eqref{ker1} in \eqref{ker2}, we obtain
$$ \theta(\beta_{T(a_1)}(a_2))\; \theta(a_1 \circ_T a_2)^{-1}\; \theta(a_1) \;  \rho(a_2, T(a_1))= \theta(a_2)\; \theta(a_1 \circ_T a_2)^{-1}\; \theta(a_1).$$
Since $K$ is abelian, this further gives
$$\rho(a_2, T(a_1))= \theta(a_2) \; \theta(\beta_{T(a_1)}(a_2))^{-1}$$
for all $a_1, a_2 \in A$. Hence,
\begin{align*}
	\ker (\Pi_1)=\Bigg\{[(\tau_1 ,\tau_2, \rho, \chi)] \in \Ho^2_{RRB}(\mathcal{A}, \mathcal{K}) \hspace{.1cm} \Big \vert  \hspace{.1cm}  \substack{ \tau_1(a_1, a_2)=\; \theta(a_2)\; \theta(a_1 \cdot a_2)^{-1}\; \theta(a_1)  \mbox{ and }  \rho(a_2, T(a_1))= \; \theta(a_2) \; \theta(\beta_{T(a_1)}(a_2))^{-1} \\ \mbox{ for some map  } \theta:A \to K}  \Bigg\}.
\end{align*}

\begin{remark}
	If $\mathcal{A}$ is a trivial relative Rota-Baxter group, then $\ker (\Pi_1)$ consist of $[(\tau_1 ,\tau_2, \rho, \chi)]$ such that $\tau_1 \in \B^2_{Gp}(A, K)$ and $\rho^T$ is a trivial map.
\end{remark}

It is easy to see that the following  maps are group homomorphisms:
\begin{enumerate}
	\item  $\Pi_2: \Ho^2_{RRB}(\mathcal{A}, \mathcal{K}) \longrightarrow \Ho^2_{Gp}(A,K)$  defined by $\Pi_2 \big([(\tau_1, \tau_2, \rho, \chi)] \big)=[\tau_1].$
	\item $\Pi_3: \Ho^2_{RRB}(\mathcal{A}, \mathcal{K}) \longrightarrow \Ho^2_{Gp}(B,L)$  defined by 	$\Pi_3 \big([(\tau_1, \tau_2, \rho, \chi)] \big)=[\tau_2].$
	\item $\Pi_4: \Ho^2_{RRB}(\mathcal{A}, \mathcal{K}) \longrightarrow \Ho^2_{Gp}(A,K) \bigoplus \Ho^2_{Gp}(B,L)$ defined by  $\Pi_4 \big([(\tau_1, \tau_2, \rho, \chi)] \big)=([\tau_1], [\tau_2]).$
\end{enumerate}

Further, the kernels of $\Pi_2$, $\Pi_3$ and $\Pi_4$ are as follows: 
\begin{enumerate}
	\item $\ker (\Pi_2)= \big\{ [(\tau_1 ,\tau_2, \rho, \chi)] \in \Ho^2_{RRB}(\mathcal{A}, \mathcal{K}) \mid \tau_1 \in \B^2_{Gp}(A, K)   \big\}.$
	\item $\ker (\Pi_3)= \big\{ [(\tau_1 ,\tau_2, \rho, \chi)] \in \Ho^2_{RRB}(\mathcal{A}, \mathcal{K}) \mid \tau_2 \in \B^2_{Gp}(B, L)   \big\}.$
	\item $\ker (\Pi_4)= \big\{ [(\tau_1 ,\tau_2, \rho, \chi)] \in \Ho^2_{RRB}(\mathcal{A}, \mathcal{K}) \mid (\tau_1, \tau_2) \in \B^2_{Gp}(A, K) \bigoplus \B^2_{Gp}(B, L)  \big\}.$
\end{enumerate}
\medskip

  \section{Hochschild-Serre exact sequence for relative Rota-Baxter groups}\label{Hochschild-Serre sequence}

Given maps $f_1, \ldots, f_n:X \longrightarrow Y$, let $(f_1, \ldots, f_n): X \longrightarrow Y \times \cdots \times Y$ be the map given by $(f_1, \ldots, f_n)(x)= \big(f_1(x), \ldots, f_n(x) \big)$ for $x \in X$.

\begin{prop}\label{cohomology with product coeff}
Let $\mathcal{A} =(A,B,\beta,T)$ be a relative Rota-Baxter group and $\mathcal{K}_1,\dots ,\mathcal{K}_n$ be trivial $\mathcal{A}$-modules, where $\mathcal{K}_i =(K_i,L_i,\alpha_i,S_i)$. Then the map 
$$\Psi: \prod_{i=1}^n \Ho^2_{RRB}\left(\mathcal{A},\mathcal{K}_i\right) \longrightarrow \Ho ^2_{RRB}\left(\mathcal{A},\mathcal{K}_1\times\cdots\times \mathcal{K}_n\right)$$
given by
$$\big([(\tau_{1_1},\tau_{2_1},\rho_1,\chi_1)],\dots,[(\tau_{1_n},\tau_{2_n},\rho_n,\chi_n)] \big) \longmapsto \big[\left((\tau_{1_1},\dots,\tau_{1_n}),(\tau_{2_1},\dots,\tau_{2_n}),(\rho_1,\dots,\rho_n),(\chi_1,\dots,\chi_n)\right) \big],$$
is an isomorphism of cohomology groups. 
\end{prop}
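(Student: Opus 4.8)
The plan is to prove the result first at the level of cocycles and coboundaries, and then pass to the quotient. Write $\mathcal{K} := \mathcal{K}_1 \times \cdots \times \mathcal{K}_n = \big(\prod_{i=1}^n K_i, \prod_{i=1}^n L_i, \alpha, S\big)$, where all operations are coordinate-wise; in particular $\alpha$ is trivial and $S = (S_1, \ldots, S_n)$ acts by $S(k_1, \ldots, k_n) = (S_1(k_1), \ldots, S_n(k_n))$, so that $\mathcal{K}$ is again a trivial $\mathcal{A}$-module. The only data carried by the coefficients are the abelian groups $K_i, L_i$ together with the homomorphisms $S_i$, each of which splits as a direct product. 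First I would record the elementary identifications $C(A \times B, \prod_i K_i) \cong \prod_i C(A \times B, K_i)$, $C(A, \prod_i L_i) \cong \prod_i C(A, L_i)$, $\Z^2_{Gp}(A, \prod_i K_i) \cong \prod_i \Z^2_{Gp}(A, K_i)$ and $\Z^2_{Gp}(B, \prod_i L_i) \cong \prod_i \Z^2_{Gp}(B, L_i)$, all given by assembling over the coordinate projections. These are standard, since both the group operations on the coefficients and the group-cocycle conditions (including vanishing on degenerate tuples) are evaluated coordinate-wise.

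Next I would show that, under these assembly identifications, a quadruple $\big((\tau_{1_i})_i, (\tau_{2_i})_i, (\rho_i)_i, (\chi_i)_i\big)$ lies in $\Z^2_{RRB}(\mathcal{A}, \mathcal{K})$ precisely when each component $(\tau_{1_i}, \tau_{2_i}, \rho_i, \chi_i)$ lies in $\Z^2_{RRB}(\mathcal{A}, \mathcal{K}_i)$. The point is that conditions \eqref{RRBC1}, \eqref{RRBC2} and \eqref{RRBC3} are equalities of elements of $\prod_i K_i$ or $\prod_i L_i$, and such an equality holds if and only if it holds in every coordinate; since $\beta$ acts only on $A$ and $S$ is applied coordinate-wise, the $i$-th coordinate of each of these equations is exactly the corresponding instance of \eqref{RRBC1}--\eqref{RRBC3} for $\mathcal{K}_i$. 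As the group structure on $\Z^2_{RRB}$ is inherited coordinate-wise from the coefficient groups, this yields a group isomorphism $\prod_{i=1}^n \Z^2_{RRB}(\mathcal{A}, \mathcal{K}_i) \cong \Z^2_{RRB}(\mathcal{A}, \mathcal{K})$, realised by the same assembly formula that defines $\Psi$.

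I would then check that this isomorphism carries $\prod_i \B^2_{RRB}(\mathcal{A}, \mathcal{K}_i)$ onto $\B^2_{RRB}(\mathcal{A}, \mathcal{K})$. A coboundary in $\B^2_{RRB}(\mathcal{A}, \mathcal{K})$ is parametrised by maps $\theta_1: A \to \prod_i K_i$ and $\theta_2: B \to \prod_i L_i$ through \eqref{t1}--\eqref{t4}; decomposing $\theta_1 = (\theta_{1_i})_i$ and $\theta_2 = (\theta_{2_i})_i$ and using again that $\partial^1$ and $S$ act coordinate-wise shows that the resulting quadruple is a coboundary for $\mathcal{K}$ if and only if each component is a coboundary for $\mathcal{K}_i$ relative to $(\theta_{1_i}, \theta_{2_i})$. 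Hence the cocycle isomorphism restricts to an isomorphism on coboundaries, and passing to quotients shows that $\Psi$ is a well-defined group isomorphism. The only step requiring genuine care is the coordinate-wise splitting of \eqref{RRBC3}, since it is the single defining relation that couples the $K$- and $L$-valued data through the operator $S$; once one observes that $S = (S_1, \ldots, S_n)$ respects the direct-product decomposition, the splitting is immediate and the remaining verifications are routine bookkeeping.
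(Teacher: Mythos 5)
Your proposal is correct and follows essentially the same route as the paper: both arguments rest on the observation that the cocycle conditions \eqref{RRBC1}--\eqref{RRBC3} and the coboundary data \eqref{t1}--\eqref{t4} split coordinate-wise, so that assembly over the coordinate projections gives compatible isomorphisms of cocycle and coboundary groups which descend to cohomology. The only cosmetic difference is that the paper constructs the forward map and its inverse separately at the cocycle level, whereas you phrase it as a single cocycle-group isomorphism restricting to coboundaries; the content is identical.
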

\begin{proof}
Let $\mathcal{K}=\mathcal{K}_1\times\cdots\times \mathcal{K}_n$, $K= K_1 \times \cdots \times K_n$ and $L= L_1 \times \cdots \times L_n$. Define
$$ \psi : \prod_{i=1}^n \Z^2_{RRB}(\mathcal{A},\mathcal{K}_i) \longrightarrow \Ho^2_{RRB}(\mathcal{A},\mathcal{K})$$
by 
$$\psi \big((\tau_{1_1},\tau_{2_1},\rho_1,\chi_1),\dots,(\tau_{1_n},\tau_{2_n},\rho_n,\chi_n)\big) =\big[\left((\tau_{1_1},\dots,\tau_{1_n}),(\tau_{2_1},\dots,\tau_{2_n}),(\rho_1,\dots,\rho_n),(\chi_1,\dots,\chi_n)\right)\big]
$$
for $(\tau_{1_i},\tau_{2_i},\rho_i,\chi_i) \in  \Z^2_{RRB}(\mathcal{A},\mathcal{K}_i)$.
Clearly,  we have 
$$ \big((\tau_{1_1},\dots,\tau_{1_n}),(\tau_{2_1},\dots,\tau_{2_n}),(\rho_1,\dots,\rho_n),(\chi_1,\dots,\chi_n) \big)\in \Z^2_{RRB}(\mathcal{A},\mathcal{K}),$$ and $\psi$ is a well-defined group homomorphism. Suppose that 
$$\mu=\big((\partial^1(\theta_{1_1}),\partial^1(\theta_{2_1}),\lambda_{1_1},\lambda_{2_1}),\dots , (\partial^1(\theta_{1_n}),\partial^1(\theta_{2_n}),\lambda_{1_n},\lambda_{2_n})\big)\in \prod_{i=1}^n \B_{RRB}(\mathcal{A},\mathcal{K}_i).$$
Define $\theta_1: A\to K$,  $\theta_2: B\to L$, $\lambda_1: A\times B\to K$ and $\lambda_2: A\to L$ by 
\begin{eqnarray*}
\theta_1(a) &=& (\theta_{1_1}(a),\dots,\theta_{1_n}(a)),\\
\theta_2(b) &=& (\theta_{2_1}(b),\dots,\theta_{2_n}(b)),\\
\lambda_1(a,b) &=& (\lambda_{1_1}(a,b),\dots,\lambda_{1_n}(a,b)),\\\
\lambda_2(a) &=& (\lambda_{2_1}(a),\dots,\lambda_{2_n}(a))
\end{eqnarray*}
for $a \in A$ and $b \in B$.  Since 
\begin{eqnarray*}
	\psi\left(\mu\right)&=& \big[((\partial^1(\theta_{1_1}),\dots,\partial^1(\theta_{1_n})),(\partial^1(\theta_{2_1}),\dots,\partial^1(\theta_{2_n})),(\lambda_{1_1},\dots,\lambda_{1_n}),(\lambda_{2_1},\dots,\lambda_{2_n})) \big]\\
	&=& \big[(\partial^1\theta_1,\partial^1\theta_2,\lambda_1,\lambda_2) \big]\\
	&=& 1,
\end{eqnarray*}
it follows that $\psi$ induces the homomorphism $\Psi$ as given in the statement of the proposition. We now define an inverse to $\Psi$. Let $ \phi: \Z^2_{RRB}(\mathcal{A},\mathcal{K}) \longrightarrow \prod_{i=1}^n \Ho^2_{RRB}(\mathcal{A},\mathcal{K}_i)$ be given by
$$
\phi (\tau_1,\tau_2,\rho,\chi)= \big([\left(\tau_{1_1},\tau_{2_1}, \rho_1, \chi_1\right)],\dots, [\left(\tau_{1_n},\tau_{2_n}, \rho_n, \chi_n\right)] \big),
$$
where $\tau_1= (\tau_{1_1}, \ldots, \tau_{1_n})$,  $\tau_2= (\tau_{2_1}, \ldots, \tau_{2_n})$, $\rho=(\rho_1, \ldots, \rho_n)$ and $\chi= (\chi_1, \ldots, \chi_n)$. Similar arguments as above show that $\phi$ is well-defined and induces a homomorphism
$$\Phi: \Ho^2_{RRB}(\mathcal{A},\mathcal{K})\longrightarrow \prod_{i=1}^n \Ho^2_{RRB}(\mathcal{A},\mathcal{K}_i), $$
which, by construction, is the inverse of $\Psi$. 
\end{proof}

Suppose that
 \begin{equation}\label{V1}
 	\mathcal{E} : \quad  {\bf 1} \longrightarrow (K,L, \alpha,S ) \stackrel{(i_1, i_2)}{\longrightarrow}  (H,G, \phi, R) \stackrel{(\pi_1, \pi_2)}{\longrightarrow} (A,B, \beta, T) \longrightarrow {\bf 1}
 \end{equation}	
is a central extension of relative Rota-Baxter groups and $(M,N,\gamma^{0},U^0)$ a trivial module over $(A,B,\beta,T)$.
 For brevity, we write $\mathcal{K}=(K,L,\alpha,S)$, $\mathcal{H}=(H,G,\phi,R)$, $\mathcal{A}=(A,B,\beta,T)$, $\mathcal{M}=(M,N,\gamma^{0},U^0)$, and define the following maps.
 \begin{enumerate}
 	\item Restriction homomorphism: A morphism $(f_1,f_2)\in \Hom_{RRB}{(\mathcal{H}, \mathcal{M})}$ induces a morphism $(f_1|_K,f_2|_L)\in \Hom_{RRB}{(\mathcal{K}, \mathcal{M})}$ by restricting the map $f_1$ to $K$ and the map $f_2$ to $L$. This assignment gives a group homomorphism $$\Res: {\Hom_{RRB}{(\mathcal{H}, \mathcal{M})}}{\longrightarrow}{\Hom_{RRB}{(\mathcal{K}, \mathcal{M})}},$$
which we refer as the {\it restriction homomorphism}.
 	\item Transgression homomorphism: 	Let $(\tau_1,\tau_2,\rho,\chi)$ be a 2-cocycle corresponding to the central extension $\eqref{V1}$. For a morphism $(g_1,g_2)\in \Hom_{RRB}{(\mathcal{K}, \mathcal{M})}$, we have $(g_1\tau_1,g_2\tau_2,g_1\rho,g_2\chi) \in \Z^2_{RRB}{(\mathcal{A}, \mathcal{M})}$. Routine calculations show that the map $${(g_1,g_2)} \longmapsto [(g_1\tau_1,g_2\tau_2,g_1\rho,g_2\chi)]$$ depends only on the cohomology class of $(\tau_1,\tau_2,\rho,\chi)$, and hence gives the group homomorphism $$\Tra : {\Hom_{RRB}{(\mathcal{K}, \mathcal{M})}}{\longrightarrow}{\Ho^2_{RRB}{(\mathcal{A}, \mathcal{M})}},$$
which we refer as the {\it transgression homomorphism}.
 	\item  Inflation homomorphism: Let $(\psi_1,\psi_2)\in \Hom_{RRB}{(\mathcal{A}, \mathcal{M})}$ be a morphism. Then $(\psi_1\pi_1,\psi_2\pi_2)\in \Hom_{RRB}{(\mathcal{H}, \mathcal{M})}$ and we obtain a group homomorphism $$\Inf: {\Hom_{RRB}{(\mathcal{A}, \mathcal{M})}}{\longrightarrow}{ \Hom_{RRB}{(\mathcal{H}, \mathcal{M})}},$$
which we refer as the {\it inflation homomorphism}.	We also have a similar group homomorphism at the level of cohomology
 	$$\overline{\Inf}: {\Ho^2_{RRB}{(\mathcal{A}, \mathcal{M})}}{\longrightarrow}{ \Ho^2_{RRB}{(\mathcal{H}, \mathcal{M})}}$$
given by
 	$$[(\tau_1,\tau_2,\rho,\chi)] \longmapsto [(\tau_1(\pi_1\times\pi_1),\tau_2(\pi_2\times\pi_2 ),\rho(\pi_1\times\pi_2),\chi\pi_1)],$$
which we again refer as the inflation homomorphism.
 \end{enumerate}

We now present the main result of this section.

 \begin{thm}{\label{HSS}}
 	Suppose that \begin{align}\label{V2}
 		\mathcal{E} : \quad  {\bf 1} \longrightarrow (K,L, \alpha,S ) \stackrel{(i_1, i_2)}{\longrightarrow}  (H,G, \phi, R) \stackrel{(\pi_1, \pi_2)}{\longrightarrow} (A,B, \beta, T) \longrightarrow {\bf 1}
 	\end{align}	
 	is a central extension of relative Rota-Baxter groups and $(M,N,\gamma^{0},U^0)$ is a trivial module over $(A,B,\beta,T)$. Then
 	\begin{small}
 		\begin{align}{\label{V3}}
 			{\bf 1} \longrightarrow {\Hom_{RRB}{(\mathcal{A},\mathcal{M})}} \stackrel{\Inf}{\longrightarrow} { \Hom_{RRB}{(\mathcal{H},\mathcal{M})}} \stackrel{\Res}{\longrightarrow} {\Hom_{RRB}{(\mathcal{K},\mathcal{M})}} \stackrel{\Tra} \longrightarrow{\Ho^2_{RRB}{(\mathcal{A}, \mathcal{M})}} \stackrel{\overline{\Inf}}{\longrightarrow} {\Ho^2_{RRB}{(\mathcal{H},\mathcal{M})}}
 		\end{align}	
 	\end{small}
 	is an exact sequence of groups.	
 \end{thm}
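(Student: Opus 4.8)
The plan is to verify exactness separately at each of the four nodes of \eqref{V3}, working throughout with a fixed set-theoretic section $(s_1,s_2)$ of $(\pi_1,\pi_2)$ normalized so that $s_1(1)=1$ and $s_2(1)=1$, and taking $(\tau_1,\tau_2,\rho,\chi)$ to be the associated $2$-cocycle, so that $\mathcal{H}$ is identified with $(A\times_{\tau_1}K,\,B\times_{\tau_2}L,\phi,R)$ via \eqref{phidefnprop} and \eqref{Rformula}. Injectivity of $\Inf$ is immediate: since $\pi_1,\pi_2$ are surjective and $M,N$ are groups, $\psi_1\pi_1=1$ and $\psi_2\pi_2=1$ force $\psi_1=1$ and $\psi_2=1$. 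For exactness at $\Hom_{RRB}(\mathcal{H},\mathcal{M})$, the composite $\Res\circ\Inf$ vanishes because $\pi_1|_K$ and $\pi_2|_L$ are trivial; conversely, if $(f_1,f_2)$ restricts trivially to $(K,L)$, then $f_1,f_2$ vanish on $\Ker(\pi_1)=K$ and $\Ker(\pi_2)=L$ and hence factor as $f_1=\psi_1\pi_1$, $f_2=\psi_2\pi_2$ for unique group homomorphisms $\psi_1,\psi_2$. That $(\psi_1,\psi_2)$ again satisfies the morphism conditions \eqref{rbb datum morphism} follows by precomposing the corresponding identities for $(f_1,f_2)$ with the surjections $\pi_1,\pi_2$ and cancelling, using that $(\pi_1,\pi_2)$ is itself a morphism $\mathcal{H}\to\mathcal{A}$.

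For exactness at $\Hom_{RRB}(\mathcal{K},\mathcal{M})$, I first compute $\Tra\circ\Res$: given $(f_1,f_2)$, the cochains $\theta_1=f_1 s_1\colon A\to M$ and $\theta_2=f_2 s_2\colon B\to N$ witness, through the defining relations for $\tau_1,\tau_2,\rho,\chi$ in terms of the section together with the fact that $f_1,f_2$ are homomorphisms and $M,N$ are abelian, that $\big((f_1|_K)\tau_1,(f_2|_L)\tau_2,(f_1|_K)\rho,(f_2|_L)\chi\big)$ is a coboundary; hence $\Tra\circ\Res=1$. Conversely, if $\Tra(g_1,g_2)=1$, there exist $\theta_1\colon A\to M$, $\theta_2\colon B\to N$ realizing the coboundary conditions \eqref{t1}--\eqref{t4} for $(g_1\tau_1,g_2\tau_2,g_1\rho,g_2\chi)$. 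I then define $f_1(a,k)=g_1(k)\,\theta_1(a)$ on $A\times_{\tau_1}K$ and $f_2(b,l)=g_2(l)\,\theta_2(b)$ on $B\times_{\tau_2}L$. The identities \eqref{t1} and \eqref{t2} make $f_1,f_2$ group homomorphisms; condition \eqref{t3} verifies $f_1\phi_{(b,l)}=f_1$ (which is the second part of \eqref{rbb datum morphism} since $\mathcal{M}$ is a trivial module); condition \eqref{t4} together with the morphism relation $g_2 S=U^0 g_1$ verifies $f_2 R=U^0 f_1$; and $f_i$ visibly restrict to $g_i$. Thus $(g_1,g_2)\in\im(\Res)$.

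Finally, for exactness at $\Ho^2_{RRB}(\mathcal{A},\mathcal{M})$, the composite $\overline{\Inf}\circ\Tra$ sends $(g_1,g_2)$ to the class of the $(\pi_1,\pi_2)$-pullback of $(g_1\tau_1,g_2\tau_2,g_1\rho,g_2\chi)$; writing $\mu_1(h)=s_1\big(\pi_1(h)\big)\,h^{-1}\in K$ and $\mu_2(g)=s_2\big(\pi_2(g)\big)\,g^{-1}\in L$, the cochains $g_1\mu_1$ and $g_2\mu_2$ exhibit this pullback as a coboundary over $\mathcal{H}$, giving $\overline{\Inf}\circ\Tra=1$. The reverse inclusion is the heart of the proof. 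Given $[(\sigma_1,\sigma_2,\nu,\xi)]$ with $\overline{\Inf}[(\sigma_1,\sigma_2,\nu,\xi)]=1$, choose $\Theta_1\colon H\to M$, $\Theta_2\colon G\to N$ trivializing the inflated cocycle and set $g_1=\Theta_1|_K$, $g_2=\Theta_2|_L$. Restricting the inflated coboundary relations to arguments with trivial $\pi$-image, and using that the cochains $\sigma_1,\sigma_2,\nu,\xi$ vanish on degenerate tuples, shows that $g_1,g_2$ are homomorphisms and that $(g_1,g_2)$ is a morphism $\mathcal{K}\to\mathcal{M}$. Then, writing $t_1=\Theta_1 s_1$ and $t_2=\Theta_2 s_2$ and using the central relation $s_1(a_1)s_1(a_2)=\tau_1(a_1,a_2)\,s_1(a_1a_2)$ in $H$, a direct substitution shows that $(\sigma_1,\sigma_2,\nu,\xi)$ and $(g_1\tau_1,g_2\tau_2,g_1\rho,g_2\chi)$ differ by the coboundary of $(t_1,t_2)$, so that $\Tra(g_1,g_2)=[(\sigma_1,\sigma_2,\nu,\xi)]$.

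The main obstacle is precisely this last step: one must check that a single pair $(t_1,t_2)$ simultaneously trivializes the difference in all four coordinates $\tau_1,\tau_2,\rho,\chi$. This requires coordinating the homomorphism data $\Theta_1,\Theta_2$ with the action $\phi$ and the operator $R$ of $\mathcal{H}$, since these enter the $\rho$- and $\chi$-coordinates through \eqref{t3} and \eqref{t4}, and the verification is sensitive both to the chosen normalizations of $(s_1,s_2)$ and of $\xi$ and to the vanishing of all cochains on degenerate tuples; the remaining nodes, by contrast, reduce to routine diagram chasing and direct cocycle manipulations.
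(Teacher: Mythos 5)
Your proposal is correct and follows essentially the same route as the paper's proof: the same trivializing cochains at each node (namely $f_is_i$ for $\Tra\circ\Res=1$, the formula $f_1(a,k)=g_1(k)\theta_1(a)$ for the converse, the maps $h\mapsto g_1\big(s_1(\pi_1(h))h^{-1}\big)$ for $\overline{\Inf}\circ\Tra=1$, and the restriction of the trivializing cochains together with $t_i=\Theta_i s_i$ for exactness at $\Ho^2_{RRB}(\mathcal{A},\mathcal{M})$). The computations you defer, including the compatibility of $(t_1,t_2)$ with all four coordinates via $\phi$ and $R$, are exactly the ones the paper carries out explicitly, and your plan for them is sound.
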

 
 \begin{proof}
 	Recall from \cite[Theorem 3.18]{BRS2023} that for a given set-theoretic section $(s_H, s_G)$ of $\mathcal{E}$, the quadruple $(\tau_1, \tau_2, \rho, \chi)$ defined below gives an element of $\Ho^2_{RRB}(\mathcal{A}, \mathcal{K})$, where
 	\begin{eqnarray}
\tau_1(a_1, a_2) & = &  s_H(a_1 a_2)^{-1}s_H(a_1)s_H(a_2)\label{c1},\\
\tau_2(b_1, b_2) &=&  s_G(b_1 b_2)^{-1}s_G(b_1)s_G(b_2)\label{c2}, \\
\rho(a_1, b_1)& =& s_H(\beta_{b_1}(a_1))^{-1} \phi_{s_G(b_1)}(s_H(a_1)) \label{c3}, \\
 \chi(a_1) &=& s_G(T(a_1))^{-1} R(s_H(a_1)) \label{c4} 
 	\end{eqnarray}
 	for all $a_1, a_2 \in A$ and $b_1, b_2 \in B$.

Exactness at $\Hom_{RRB}{(\mathcal{A},\mathcal{M})}$: By exactness of $\eqref{V2}$, the maps $\pi_1,\pi_2$ are surjective, and hence the homomorphism  $\Inf: \Hom_{RRB}{(\mathcal{A},\mathcal{M})} \longrightarrow { \Hom_{RRB}{(\mathcal{H},\mathcal{M})}}$ is injective.
 	\par
 	
Exactness at $\Hom_{RRB}{(\mathcal{H},\mathcal{M})}$: For $(\psi_1,\psi_2)\in \Hom_{RRB}{(\mathcal{A},\mathcal{M})}$, we have  $(\psi_1\pi_1,\psi_2\pi_2)\in \Hom_{RRB}{(\mathcal{H},\mathcal{M})}$. Since $\ker(\pi_1)=K$ and  $\ker(\pi_2)=L$, we have that $\Res(\psi_1\pi_1,\psi_2\pi_2)$ is trivial, and hence $\im(\Inf) \subseteq \ker(\Res)$. If $(f_1,f_2)\in \ker(\Res)$, then both $f_1|_K$ and $f_2|_L$ are trivial. For $h\in H$ and $g \in G$, let $\overline{h}\in A \cong H/K$ and $\overline{g}\in B \cong G/L$ denote their images in the quotient. Define $\psi_1: A \to M$ and $\psi_2:B \to N$ by
 	$$ \psi_1(\overline{h})= f_1(h) \quad \textrm{and} \quad  \psi_2(\overline{g})= f_2(g).$$
 	Then $(\psi_1,\psi_2) \in \Hom_{RRB}{(\mathcal{A},\mathcal{M})}$ and $\Inf(\psi_1,\psi_2) )= (f_1, f_2)$, which is desired.
 	\par 
 	
Exactness at $\Hom_{RRB}{(\mathcal{K},\mathcal{M})}$: For $(f_1,f_2)\in \Hom_{RRB}{(\mathcal{H},\mathcal{M})}$, let $\Res(f_1, f_2)=(g_1, g_2)$. Let $(\tau_1,\tau_2,\rho,\chi)$ be a 2-cocycle corresponding to $\eqref{V2}$, and $(s_H, s_G)$ a set-theoretic section to $\mathcal{E}$. Then $\Tra(\Res(f_1,f_2))= [(g_1\tau_1,g_2\tau_2,g_2\rho,g_2\chi)]$. Now,  for $a_1, a_2 \in A$ and $b_1, b_2 \in B$, we have
 	\begin{equation*}
 		g_1\tau_1(a_1,a_2) = f_1(s_H(a_1a_2)^{-1}s_H(a_1)s_H(a_2))
 	\end{equation*} 
 	and
 	\begin{equation*}
 		g_2\tau_2(b_1,b_2) = f_2((s_G(b_1b_2)^{-1})s_G(b_1)s_G(b_2)).
 	\end{equation*}
 	From \eqref{c3}, we have
 	\begin{eqnarray*}
 		g_1\rho(a_1,b_1) &=& f_1\phi_{s_G(b_1)}(s_H(a_1)) \, \, f_1(s_H(\beta_{b_1}(a_1)))^{-1}\\
 		&=& \gamma_{\eta(s_G(b_1))}^{0} f_1(s_H(a_1)) \, \,f_1(s_H(\beta_{b_1}(a_1)))^{-1}\\
 		&=& f_1s_H(a_1) \,\, f_1(s_H(\beta_{b_1}(a_1)))^{-1}, \quad \textrm{since}~ \gamma^{0}~\textrm{is trivial}
 	\end{eqnarray*}
 	whereas \eqref{c4} gives
 	\begin{eqnarray*}
 		g_2\chi(a_1) &=& f_2R(s_H(a_1)) \, \, f_2s_G(Ta_1)^{-1}  \\
 		&=& U^0(f_1s_H(a_1))\, \, f_2s_G(T(a_1))^{-1}.
 	\end{eqnarray*}
 	This implies that $(g_1\tau_1,g_2\tau_2,g_1\rho,g_2\chi) $ is a coboundary, and hence $\im(\Res) \subseteq \ker(\Tra)$. Now, let $(g_1,g_2)\in \ker(\Tra)$. Since $(g_1\tau_1,g_2\tau_2,g_1\rho,g_2\chi) $ is a 2-coboundary, there exist $\theta_1 : A {\longrightarrow} M $ and $\theta_2 : B {\longrightarrow} N $ 
 	such that  
 	\begin{eqnarray*}
 		g_1(\tau_1(a_1,a_2)) &=& \theta_1(a_2) \, \theta_1(a_1a_2)^{-1} \, \theta_1(a_1),\\
 		g_2(\tau_2(b_1,b_2)) &=& \theta_2(b_2) \, \theta_2(b_1b_2)^{-1} \, \theta_2(b_1).
 	\end{eqnarray*}
 	for $a_1, a_2 \in A$ and $b_1, b_2 \in B$.	Define $r_1:H \longrightarrow M $ and	$r_2:G \longrightarrow N$ by
 	$$r_1(s_H(a)k)=\theta_1(a) \, g_1(k) \quad \textrm{and} \quad  r_2(s_G(b)l )=\theta_2(b) \, g_2(l).$$
 	 	It can be seen that $(r_1,r_2) \in \Hom_{RRB}({\mathcal{H}},\mathcal{M})$ and clearly $\Res(r_1, r_2)=(g_1, g_2)$, which proves that $\im(\Res) = \ker(\Tra)$.
 	\par 
Exactness at $ \Ho^2_{RRB}{(\mathcal{A},\mathcal{M})}$:  If $(g_1,g_2) \in \Hom_{RRB}({\mathcal{K}},\mathcal{M})$, then
 	$$ \overline{\Inf}(\Tra(g_1,g_2)) = \big[ (g_1\tau_1(\pi_1\times\pi_1),g_2\tau_2(\pi_2\times\pi_2),g_1\rho(\pi_1\times\pi_2),g_2\chi\pi_1) \big].$$
 	Note that $s_H(\pi_1(x))x^{-1} \in \ker(\pi_1)=K$ for each $x \in H$. Thus, we can define  $\theta_1 : H {\longrightarrow} M $ by $ \theta_1 (x)= g_1(s_H(\pi_1(x))x^{-1})$ for $x \in H$. Consequently, we have
 	\begin{eqnarray*}
 		g_1\tau_1(\pi_1\times\pi_1)(x,y) &=& g_1\tau_1(\pi_1(x),\pi_1(y))\\
 		&=&  g_1\big(s_H(\pi_1(xy))^{-1}s_H(\pi_1(x))s_H(\pi_1(y))\big)\\
 		&=&  g_1\big((xy)s_H(\pi_1(xy))^{-1}s_H(\pi_1(x))s_H(\pi_1(y))(xy)^{-1})\big)\\ 
 		&=& g_1\big((s_H(\pi_1(xy))(xy)^{-1})^{-1} \, s_H(\pi_1(x)) \, s_H(\pi_1(y))y^{-1}x^{-1}\big) \\
 		&=& g_1\big((s_H(\pi_1(xy))(xy)^{-1})^{-1} \, s_H(\pi_1(x))x^{-1} \, s_H(\pi_1(y))y^{-1}\big) \\
 		&=& g_1\big(s_H(\pi_1(xy))(xy)^{-1} \big)^{-1} \, g_1\big(s_H(\pi_1(x))x^{-1}\big) \, g_1\big(s_H(\pi_1(y))y^{-1}\big) \\
 		&=& \theta_1(xy)^{-1}  \, \theta_1(x)\, \theta_1(y)\\
 		&=& \partial^1\theta_1(x,y)
 	\end{eqnarray*}
 	for all $x, y \in H$. Similarly, noting that $s_G(\pi_2(x))x^{-1} \in \ker(\pi_2)=L$ for each $x \in G$ and  defining  $\theta_2 : G {\longrightarrow} N $ by $ \theta_2 (x)= g_2(s_G(\pi_2(x))x^{-1})$,  we obtain $g_2\tau_2(\pi_2\times\pi_2) = \partial^1\theta_2 $. Also, for $x \in H$ and $y \in G$, we have
 	\begin{eqnarray*}
 		g_1\rho(\pi_1\times\pi_2)(x,y) &=& g_1\rho(\pi_1(x),\pi_2(y))\\
 		&=& g_1\big((s_H(\beta_{\pi_2(y)}(\pi_1(x))))^{-1} \,\phi_{ s_G(\pi_2(y))}(s_H(\pi_1(x)))\big),  \textrm{ using}~\eqref{c3}\\
 		&=& g_1 \big((s_H(\beta_{\pi_2(y)}(\pi_1(x))))^{-1} \,\phi_{s_G(\pi_2(y))}(s_H(\pi_1(x))x^{-1}x) \big)\\
 		&=& g_1 \big((s_H(\beta_{\pi_2(y)}(\pi_1(x))))^{-1} \,\phi_{s_G(\pi_2(y))}(s_H(\pi_1(x))x^{-1})\phi_{s_G(\pi_2(y))}(x) \big)\\
 		&=& g_1 \big((s_H(\beta_{\pi_2(y)}(\pi_1(x))))^{-1} \, (s_H(\pi_1(x))x^{-1}) \,(\phi_{s_G(\pi_2(y))}(x)) \big),\\
 		&& \textrm{since}~\phi~ \textrm{acts trivially on}~K\\
 		&=& g_1 \big((s_H(\beta_{\pi_2(y)}(\pi_1(x))))^{-1} \,(s_H(\pi_1(x))x^{-1}) \,(\phi_{ly}(x)) \big),~~ \textrm{for some}~l~ \textrm{in}~L\\
 		&=& g_1 \big((s_H(\beta_{\pi_2(y)}(\pi_1(x))))^{-1} \, (s_H(\pi_1(x))x^{-1}) \, (\phi_{y}(x)) \big),~~ \textrm{since}~l\in\ker(\phi)\\
 		&=& g_1 \big((\phi_{y}(x))(s_H(\beta_{\pi_2(y)} (\pi_1(x))))^{-1} \,(s_H(\pi_1(x))x^{-1}) \, (\phi_{y}(x))(\phi_{y}(x))^{-1} \big)\\
 		&=& g_1 \big((s_H(\beta_{\pi_2(y)}(\pi_1(x)))(\phi_{y}(x))^{-1})^{-1} \,(s_H(\pi_1(x))x^{-1} )\big)\\
 		&=& g_1 \big( (s_H(\pi_1(\phi_y(x)))(\phi_{y}(x))^{-1})^{-1} \, \,(s_H(\pi_1(x))x^{-1} )\big),\\
		&& \textrm{ since $(\pi_1, \pi_2)$ is a homomorphism }\\
 		&=& \theta_1(\phi_y(x))^{-1}\theta_1(x)
 	\end{eqnarray*}
 	and
 	\begin{eqnarray*}
 		g_2 (\chi(\pi_1(x))) &=& g_2 \big(s_G(T(\pi_1(x)))^{-1}~R(s_H(\pi_1(x))) \big),  \textrm{ using}~\eqref{c4}\\
 		&=& g_2 \big(s_G(\pi_2(R(x)))^{-1}~R(s_H(\pi_1(x))) \big), \textrm{ since  $T \pi_1=\pi_2 R$ } \\
 		&=& g_2 \big(R(x)s_G(\pi_2(R(x)))^{-1}~R(s_H(\pi_1(x)))R(x)^{-1} \big)~ \textrm{ conjugating by $R(x)$ }\\
 		&=& g_2 \big((s_G(\pi_2(R(x)))R(x)^{-1})^{-1}~R(s_H(\pi_1(x)))R(x)^{-1} \big)\\
 		&=& g_2 \big((s_G(\pi_2(R(x)))R(x)^{-1})^{-1}~R(s_H(\pi_1(x))x^{-1}x)~R(x)^{-1} \big)\\
 		&=& g_2 \big((s_G(\pi_2(R(x)))R(x)^{-1})^{-1}~R(s_H(\pi_1(x))x^{-1}\phi_{R(s_H(\pi_1(x))x^{-1})}(x))R(x)^{-1} \big),\\
 		&&\textrm{since}~R|_K=S,~S~\textrm{maps}~K~\textrm{to}~L~\textrm{and}~\phi|_L~~\textrm{is trivial}\\
 		&=& g_2 \big((s_G(\pi_2(R(x)))~R(x)^{-1})^{-1}R(s_H(\pi_1(x))x^{-1})~R(x)~R(x)^{-1} \big), \textrm{ by definition of }R\\
 		&=& g_2(s_G(\pi_2(R(x)))R(x)^{-1})^{-1}~g_2(S(s_H(\pi_1(x))x^{-1})), \quad \textrm{since}~R|_K=S\\
 		&=& U^0(g_1(s_H(\pi_1(x))x^{-1}))~g_2(s_G(\pi_2(R(x)))R(x)^{-1})^{-1}, \textrm{ since}~(g_1,g_2)  \textrm{ is a homomorphism}\\
 		&=&U^0(\theta_1(x))~\theta_2(R(x))^{-1}.
 	\end{eqnarray*}

 	Thus, $\overline{\Inf}(\Tra(g_1,g_2))$ is a 1-coboundary, and we have $\im(\Tra) \subseteq \ker( \overline{\Inf})$. Now, let $\big[(\tau'_1,\tau'_2,\rho',\chi') \big] \in\ker(\overline{\Inf})$. Then there exist maps $\theta'_1: H\rightarrow M$ and $\theta'_2: G\rightarrow N$ such that
 	\begin{eqnarray}
 		\tau'_1(a_1,a_2) &=& \theta'_1(s_H(a_1)k_1)~ \big(\theta'_1(s_H(a_1a_2)k_1k_2\tau_1(a_1,a_2)) \big)^{-1}~ \theta'_1(s_H(a_2)k_2), \label{coc1}\\
 		\tau'_2(b_1,b_2) &=& \theta'_2(s_G(b_1)l_1)~ \big(\theta'_2(s_G(b_1b_2)l_1l_2\tau_2(b_1,b_2))\big)^{-1} ~\theta'_2(s_G(b_2)l_2), \label{coc2}\\
 		\rho'(a_1,b_1) &=& \theta'_1(s_H(a_1)k_1) ~\big(\theta'_1(\phi_{s_G(b_1)l_1}(s_H(a_1)k_1)) \big)^{-1}, \label{coc3}\\
 		\chi'(a_1) &=& U^0(\theta'_1(s_H(a_1)k_1))~\big(\theta'_2(R(s_H(a_1)k_1))\big)^{-1},\label{coc4}
 	\end{eqnarray}
 	for all $a_1,a_2\in A$, $b_1,b_2\in B$, $k_1,k_2\in K$ and $l_1,l_2\in L$. Consider the maps $g_1: K\to M$ and $g_2: L\to N$ given by  $g_1(k)= \theta'_1(k)$ and $g_2(l)= \theta'_2(l)$.  By setting $a_1=a_2=1$ in \eqref{coc1} (similarly $b_1=b_2=1$ in \eqref{coc2}),  we see that $g_1$ and $g_2$ are group homomorphisms. Also, consider the maps $\theta_1: A\to M$ and $\theta_2: B\to N$ given by $\theta_1 (a)= \theta'_1(s_H(a))$ and  $\theta_2(b)= \theta'_2(s_G(b))$. Taking $k_1=k_2=1$ and $l_1=l_2=1$ in \eqref{coc1}, \eqref{coc2},\eqref{coc3} and \eqref{coc4}, we get
 	\begin{eqnarray*}
 		\tau'_1(a_1,a_2) &=& \theta_1(a_1)~ \big(\theta_1(a_1a_2)\big)^{-1}~\theta_1(a_2)~\big(g_1(\tau_1(a_1,a_2)))^{-1},\\
 		\tau'_2(b_1,b_2)  &=& \theta_2(b_1)~\big(\theta_2(b_1b_2))^{-1}~\theta_2(b_2)~\big(g_2(\tau_2(b_1,b_2))\big)^{-1},\\
 		\rho'(a_1,b_1) &=& \theta_1(a_1)~\big(\theta_1(\beta_{b_1}(a_1))\big)^{-1}~\big(g_1(\rho(a_1,b_1))\big)^{-1}, \quad \textrm{using}~\eqref{c3}\\
 		\chi'(a_1) &=& U^0(\theta_1(a_1))~\big(\theta_2(T(a_1))\big)^{-1}~\big(g_2(\chi(a_1))\big)^{-1}, \quad \textrm{using}~\eqref{c4}.
 	\end{eqnarray*}
 	Moreover, for $a_1=1$ in \eqref{coc4}, we get $U^0g_1=g_2S$, and hence $(g_1,g_2)\in \Hom_{RRB}(\mathcal{K},\mathcal{M})$. This shows that 
 	$\big[ (\tau'_1,\tau'_2,\rho',\chi') \big] \in \IM(\Tra)$, and the proof of the theorem is complete.
 \end{proof}	
 \medskip

 \section{Schur multiplier of relative Rota-Baxter groups}\label{sec Schur multiplier and cover}

We begin with our main definition.
 
 \begin{defn}
The Schur multiplier of a relative Rota-Baxter group $\mathcal{A}=(A,B,\beta,T)$ is defined to be the group $M_{RRB}(\mathcal{A})=\Ho^2_{RRB}{(\mathcal{A},\mathcal{C})}$, where $\mathcal{C}=(\mathbb{C}^{\times},\mathbb{C}^{\times},\alpha^0,S^0)$ is the relative Rota-Baxter group such that $\alpha^0$ is trivial and $S^0$ is the identity map.
 \end{defn}
  
Note that we could have taken $S^0$ to be any automorphism of $\mathbb{C}^{\times}$ since each such relative Rota Baxter group is isomorphic to $\mathcal{C}$.  A relative Rota-Baxter group $(A,B,\beta,T)$ is said to be {\it finite} if both $A$ and $B$ are finite. Building upon the arguments for the group case, we prove the following result.

 \begin{thm}\label{VI1}
 	If $\mathcal{A} = (A,B,\beta,T)$ is a finite relative Rota-Baxter group, then the exponent of $M_{RRB}(\mathcal{A})$ divides $|A||B|$.
 \end{thm}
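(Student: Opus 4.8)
The plan is to imitate the classical averaging proof that $\exp\big(\Ho^2_{Gp}(G,\mathbb{C}^\times)\big)$ divides $|G|$, but carried out simultaneously on all four components of a cocycle. Write $n=|A||B|$ and fix $(\tau_1,\tau_2,\rho,\chi)\in\Z^2_{RRB}(\mathcal{A},\mathcal{C})$, recalling that here $K=L=\mathbb{C}^\times$ and $S=S^0=\Id$. I will construct maps $\theta_1\colon A\to\mathbb{C}^\times$ and $\theta_2\colon B\to\mathbb{C}^\times$ witnessing, via \eqref{t1}--\eqref{t4}, that $(\tau_1^n,\tau_2^n,\rho^n,\chi^n)$ is a coboundary; since the group operation on $\Z^2_{RRB}(\mathcal{A},\mathcal{C})$ is componentwise, this says $[(\tau_1,\tau_2,\rho,\chi)]^n=1$ in $M_{RRB}(\mathcal{A})$, whence the claim.

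First I would set $\theta_1^A(a)=\prod_{a'\in A}\tau_1(a,a')$ and $\theta_1=(\theta_1^A)^{|B|}$. Multiplying the $2$-cocycle identity for $\tau_1$ over $A$ gives the classical relation $\tau_1^{|A|}=\partial^1(\theta_1^A)$, hence $\tau_1^n=\partial^1(\theta_1)$, which is \eqref{t1}. Next, writing $P(b)=\prod_{a\in A}\rho(a,b)$, multiplying \eqref{RRBC1} over $a\in A$ and using that each $\beta_b$ permutes $A$ shows $P$ is a homomorphism $B\to\mathbb{C}^\times$; multiplying \eqref{RRBC2} over $a_2\in A$ and cancelling the factor $P(b)$ yields $\rho(a,b)^{|A|}=\theta_1^A(a)\,\theta_1^A(\beta_b(a))^{-1}$, so raising to the power $|B|$ gives $\rho^n(a,b)=\theta_1(a)\,\theta_1(\beta_b(a))^{-1}$, which is exactly \eqref{t3} for $\theta_1$. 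Thus the single map $\theta_1$ handles both the $\tau_1$- and $\rho$-components with the common exponent $n$.

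The components $\tau_2$ and $\chi$ force the construction of $\theta_2$, and this is where care is needed. Multiplying \eqref{RRBC3} over $a_2\in A$ (with $S=\Id$), and using that $a_2\mapsto a_1\circ_T a_2$ permutes $A$ so that the $\partial^1(\chi)$-product telescopes, I would obtain
$$\chi(a_1)^{|A|}=P(T(a_1))\,\theta_1^A(a_1)\,Q(T(a_1))^{-1},\qquad Q(b):=\prod_{a\in A}\tau_2(b,T(a)).$$
Raising this to $|B|$ and substituting $\theta_1=(\theta_1^A)^{|B|}$, the $\theta_1^A$-terms cancel and I get $\theta_1(a_1)\,\chi(a_1)^{-n}=P(T(a_1))^{-|B|}Q(T(a_1))^{|B|}$, an expression depending only on $T(a_1)$. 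Hence $\omega(b):=P(b)^{-|B|}Q(b)^{|B|}$ is well defined on the subgroup $C=T(A)\leq B$ and satisfies $\omega(T(a))=\theta_1(a)\chi(a)^{-n}$, precisely what \eqref{t4} demands of $\theta_2$ on $C$.

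It remains to extend $\omega$ to a map $\theta_2\colon B\to\mathbb{C}^\times$ satisfying $\tau_2^n=\partial^1(\theta_2)$, i.e. \eqref{t2}, and this reconciliation is the main obstacle: the $\tau_2$-component naturally prefers products over all of $B$, while the $\chi$-component forces products over the subgroup $T(A)$. The crux is the identity $\partial^1(Q)(b_1,b_2)=\tau_2(b_1,b_2)^{|A|}\,F(b_1,b_2)$ with $F(b_1,b_2)=\prod_{a\in A}\tau_2(b_1,T(a))\,\tau_2(b_1,b_2T(a))^{-1}$, together with the observation that $F(b_1,b_2)=1$ whenever $b_2\in C$, since left translation by $b_2$ permutes $C=T(A)$ and so leaves the multiset $\{\,T(a)\,\}$ unchanged. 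Taking $\psi=(\theta_2^B)^{|A|}$ with $\theta_2^B(b)=\prod_{b'\in B}\tau_2(b,b')$ gives $\partial^1(\psi)=\tau_2^n$, while $\partial^1(P)=1$ and the vanishing of $F$ on $B\times C$ give $\partial^1(\omega)|_C=\tau_2^n|_C=\partial^1(\psi|_C)$; hence $\omega/\psi|_C$ is a genuine homomorphism $C\to\mathbb{C}^\times$. Since $\mathbb{C}^\times$ is divisible, hence an injective $\mathbb{Z}$-module, this homomorphism extends to some $\eta\in\Hom(B,\mathbb{C}^\times)$, and $\theta_2:=\psi\eta$ satisfies both $\partial^1(\theta_2)=\tau_2^n$ and $\theta_2|_C=\omega$. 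With $\theta_1,\theta_2$ in hand, all of \eqref{t1}--\eqref{t4} hold for the $n$-th powers, so $[(\tau_1,\tau_2,\rho,\chi)]^n=1$ and $\exp\big(M_{RRB}(\mathcal{A})\big)\mid|A||B|$. I expect the identity $F|_{B\times C}=1$, which is what makes the two product regimes compatible, and the divisibility-based extension of $\omega/\psi|_C$ to all of $B$, to be the only genuinely non-routine points.
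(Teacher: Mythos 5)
Your overall strategy --- averaging each component of the cocycle to produce maps $\theta_1,\theta_2$ witnessing via \eqref{t1}--\eqref{t4} that the $|A||B|$-th power of $(\tau_1,\tau_2,\rho,\chi)$ is a coboundary --- is exactly the paper's, and your treatment of the $\tau_1$- and $\rho$-components (take $\theta_1=\big(\prod_{a'}\tau_1(\cdot,a')\big)^{|B|}$ and multiply \eqref{RRBC2} over $a_2\in A$) coincides with the paper's computation. The gap is in the last step of your reconciliation of the $\tau_2$- and $\chi$-components: you must extend the homomorphism $\omega/\psi|_C\colon C\to\mathbb{C}^{\times}$, where $C=T(A)$, to a homomorphism $\eta\in\Hom(B,\mathbb{C}^{\times})$, and you justify this by the injectivity of $\mathbb{C}^{\times}$ as a $\mathbb{Z}$-module. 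That principle applies to inclusions of \emph{abelian} groups; here $B$ is an arbitrary finite group, and a character of a subgroup of a nonabelian group need not extend (a nontrivial character of $A_3\le S_3$ does not extend, since $\Hom(S_3,\mathbb{C}^{\times})\cong\mathbb{Z}/2$). As written, this step fails.

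The gap is repairable, and in fact closes completely, because the character you are trying to extend is trivial, so that $\eta=1$ and $\theta_2=\psi$ already work. Indeed, since $P$ is a homomorphism, $P(b)^{|B|}=P\big(b^{|B|}\big)=1$ for every $b\in B$; and the same multiset invariance you invoke for $F$ --- namely that $\{T(a_1)T(a_2)\}_{a_2\in A}=\{T(a_2)\}_{a_2\in A}$ as multisets, because $T(a_1)T(a_2)=T(a_1\circ_T a_2)$ and $a_2\mapsto a_1\circ_T a_2$ permutes $A$ --- applied to the product of $\tau_2(T(a_1),T(a_2))^{|B|}=\partial^1(\theta_2^B)(T(a_1),T(a_2))$ over $a_2\in A$ gives $Q(T(a_1))^{|B|}=\theta_2^B(T(a_1))^{|A|}=\psi(T(a_1))$. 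Hence $\omega|_C=\psi|_C$. This is precisely how the paper proceeds: it sets $\theta_2(y)=\big(\prod_{b\in B}\tau_2(y,b)\big)^{|A|}$ (your $\psi$) from the outset and verifies \eqref{t4} for it directly, using $\prod_{a_2}\rho(a_2,T(a_1))^{|B|}=1$ (your $P(T(a_1))^{|B|}=1$, which the paper derives from \eqref{RRBC1} and $T(a_1)^{|B|}=1$) together with the telescoping identity above applied to \eqref{Mcond}. So the two product regimes you worry about are compatible not merely up to a character of $T(A)$, but on the nose; establishing that equality is the missing step in your argument, and once it is supplied your proof reduces to the paper's.
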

 \begin{proof}
 	Let $|A|=m$, $|B|=n$ and  $(\tau_1,\tau_2,\rho,\chi)$ represent an element of $M_{RRB}(\mathcal{A})$. Let
 	\begin{align}
 		\mathcal{E} : \quad  {\bf 1} \longrightarrow (\mathbb{C}^{\times},\mathbb{C}^{\times},\alpha^0,S^0) \stackrel{(i_1, i_2)}{\longrightarrow}  (H,G, \phi, R) \stackrel{(\pi_1, \pi_2)}{\longrightarrow} (A,B, \beta, T) \longrightarrow {\bf 1}
 	\end{align}	
 	be a central extension corresponding to $(\tau_1,\tau_2,\rho,\chi)$, and $(s_H,s_G)$ a set-theoretic section to $\mathcal{E}$. Then $\tau_1(a_1, a_2)= s_H(a_1 a_2)^{-1}s_H(a_1)s_H(a_2)$ for $ a_1, a_2 \in A$ and 	$\tau_2(b_1, b_2)= s_G(b_1 b_2)^{-1}s_G(b_1)s_G(b_2)$ for  $b_1, b_2 \in B$.  Also, we have the group 2-cocycle conditions
 	\begin{eqnarray*}
 		\tau_1(a_1a_2,a_3)^{-1}\tau_1(a_1,a_2a_3)\tau_1(a_2,a_3)&=&\tau_1(a_1,a_2),\\
 		\tau_2(b_1b_2,b_3)^{-1}\tau_2(b_1,b_2b_3)\tau_2(b_2,b_3)&=&\tau_2(b_1,b_2),
 	\end{eqnarray*}
 	for all $a_1,a_2,a_3\in A$ and $b_1,b_2,b_3 \in B$.
	For each $x \in A$ and $y \in B$, let
 	\begin{equation}
 		\label{VI11}
 		A(x)=\prod_{a\in A}\tau_1(x,a) \quad \textrm{and} \quad
 		B(y)=\prod_{b\in B}\tau_2(y,b).
 	\end{equation}
 	By taking product over all $a_3 \in A$ and all $b_3\in B$, we obtain
 	\begin{equation*}
 		A(a_1 a_2)^{-1}A(a_1)A(a_2)=\tau_1(a_1,a_2)^m
 	\end{equation*}
 	and similarly
 	\begin{equation}\label{Mcond}
 		B(b_1 b_2)^{-1} B(b_1) B(b_2)=\tau_2(b_1,b_2)^n.
 	\end{equation}
 	Let $\theta_1: A\to \mathbb{C}^{\times}$ and $\theta_2: B\to \mathbb{C}^{\times}$ be maps given by
 	$$\theta_1(x)=A(x)^n \quad \textrm{and} \quad \theta_2(y)=B(y)^m.$$ Then we have $\theta_1(1)=1$ and $\theta_2(1)=1$. Further,
\begin{equation}\label{theta1 theta2 coboundary}
 \theta_1(a_2)\theta_1(a_1a_2)^{-1}\theta_1(a_1)=\tau_1(a_1,a_2)^{mn} \quad \textrm{and} \quad \theta_2(b_2)\theta_2(b_1b_2)^{-1}\theta_2(b_1)=\tau_2(b_1,b_2)^{mn}
\end{equation}
for all $a_1, a_2 \in A$ and $b_1, b_2 \in B$. Using \eqref{RRBC2}, we have  
 	\begin{equation}\label{rhodis1}
 		\rho(a_1,b)\rho(a_2, b)\rho(a_1 a_2, b)^{-1} =  \tau_1(a_1, a_2)  \tau_1(\beta_{b}(a_1), \beta_{b}(a_2))^{-1}
 	\end{equation}
for all $a_1,a_2\in A$ and $b\in B$. For  $a_1\in A$ and $b\in B$, taking product over all $a_2\in A$ gives
\begin{eqnarray*}
\prod_{a_2\in A}(\rho(a_1,b)\rho(a_2, b)\rho(a_1 a_2, b)^{-1}) &=& \prod_{a_2\in A}( \tau_1(a_1, a_2)  \tau_1(\beta_{b}(a_1), \beta_{b}(a_2))^{-1}).
\end{eqnarray*}
Since $\prod_{a_2\in A}\rho(a_1 a_2, b)^{-1}=\prod_{a_2\in A}\rho( a_2, b)^{-1}$, we get
\begin{eqnarray*}
\rho(a_1,b)^{m}  &=&  \prod_{a_2\in A}\tau_1(a_1, a_2)  \prod_{a_2\in A}\tau_1(\beta_{b}(a_1), \beta_{b}(a_2))^{-1}\\
&=&A(a_1) \prod_{a_2\in A}\tau_1(\beta_{b}(a_1), \beta_{b}(a_2))^{-1}\\
&=&A(a_1) \prod_{a_2\in A}\tau_1(\beta_{b}(a_1), a_2)^{-1},\quad\textrm{since}~\beta_b~\textrm{is an automorphism of A}\\
&=&A(a_1)(A(\beta_b(a_1)))^{-1}.
\end{eqnarray*}
Taking $n$-th powers on both the sides give
\begin{equation}\label{rho coboundary}
 \rho(a_1,b)^{mn} =A(a_1)^n(A(\beta_b(a_1)))^{-n}=\theta_1(a_1)(\theta_1(\beta_b(a_1)))^{-1}
\end{equation}
for all $a_1\in A$ and $b\in B$. 	
\par 

Similarly, using \eqref{RRBC3}, we have
\begin{equation}
\chi(a_2)\chi(a_1a_2)^{-1}\chi(a_1) = S^0(\rho(a_2, T(a_1))~\tau_1(a_1,\beta_{T(a_1)}(a_2)) )~ \tau_2(T(a_1), T(a_2) )^{-1}
\end{equation}
for all $a_1,a_2\in A$. Taking product over all $a_2\in A$ give
 	\begin{eqnarray*}
 		\prod_{a_2\in A}(\chi(a_2)\chi(a_1a_2)^{-1}\chi(a_1)) &=& \prod_{a_2\in A} \big(S^0(\rho(a_2, T(a_1))~\tau_1(a_1,\beta_{T(a_1)}(a_2)) ) ~\tau_2(T(a_1), T(a_2) )^{-1} \big).
\end{eqnarray*}
Since $\prod_{a_2\in A}\chi(a_1a_2)^{-1}=\prod_{a_2\in A}\chi(a_2)^{-1}$, we have
 	\begin{eqnarray*}
 		\chi(a_1)^{m}&=& \prod_{a_2\in A}S^0(\rho(a_2, T(a_1))) ~\prod_{a_2\in A} S^0(\tau_1(a_1,\beta_{T(a_1)}(a_2)) )~ \prod_{a_2\in A}\tau_2(T(a_1), T(a_2) )^{-1}\\
&=& S^0\big(\prod_{a_2\in A} \rho(a_2, T(a_1))\big)~\prod_{a_2\in A} S^0(\tau_1(a_1,a_2)) ~ \prod_{a_2\in A}\tau_2(T(a_1), T(a_2) )^{-1},\\ &&\textrm{since }\beta_{T(a_1)}\textrm{is an isomorphism of A.}\\
 		&=& S^0(A(a_1))~S^0 \big(\prod_{a_2\in A} \rho(a_2, T(a_1))\big)~\prod_{a_2\in A}\tau_2(T(a_1), T(a_2) )^{-1}.
 	\end{eqnarray*}
Taking $n$-th powers on both the sides give
 	\begin{eqnarray} \label{chi rel 1}
 		\chi(a_1)^{mn}&=& S^0(A(a_1))^n~S^0 \big(\prod_{a_2\in A} \rho(a_2, T(a_1))\big)^n~\prod_{a_2\in A}\tau_2(T(a_1), T(a_2) )^{-n}\\  \notag
 		&=& S^0(A(a_1)^n) ~S^0 \big(\prod_{a_2\in A} \rho(a_2, T(a_1))^n \big)~\prod_{a_2\in A}\tau_2(T(a_1), T(a_2) )^{-n},\\ \notag
 		&=& S^0(\theta_1(a_1))~S^0 \big(\prod_{a_2\in A}\rho(a_2, T(a_1))^{n}\big) ~\prod_{a_2\in A}\tau_2(T(a_1), T(a_2) )^{-n}. \notag
 	\end{eqnarray}
Now, using \eqref{RRBC1}, we obtain
 	\begin{equation}\label{rhodis2}
 		\rho(a, b_1 b_2)=\rho(\beta_{b_2}(a), b_1)\rho(a,b_2).
 	\end{equation}
for all $a \in A$ and $b_1, b_2 \in B$. Then, for $a_2\in A$ and $T(a_1)\in B$, \eqref{rhodis2} gives
 	\begin{eqnarray*}
 		\rho(a_2,T(a_1)^n)&=&\rho(\beta_{T(a_1)}(a_2), T(a)^{n-1})\rho(a_2,T(a_1))\\
 		&=&\rho(\beta_{T(a_1)}^2(a_2), T(a)^{n-2})\rho(\beta_{T(a_1)}(a_2),T(a_1))\rho(a_2,T(a_1))\\
 		&=&\rho(\beta_{T(a_1)}^{n-1}(a_2), T(a_1))\cdots\rho(\beta_{T(a_1)}(a_2),T(a_1))\rho(a_2,T(a_1))
 	\end{eqnarray*}
Taking product over all $a_2\in A$ yield
 	\begin{eqnarray*}
 		\prod_{a_2\in A}\rho(a_2,T(a_1)^n)&=&\prod_{a_2\in A} \big(\rho(\beta_{T(a_1)}^{n-1}(a_2), T(a_1))\dots\rho(\beta_{T(a_1)}(a_2),T(a_1))\rho(a_2,T(a_1))\big)\\
 		&=&\prod_{a_2\in A}\rho(a_2,T(a_1))^n,\quad\textrm{since}~\beta_{T(a_1)}^i\textrm{ is an automorphism of A for each $i$.}
 	\end{eqnarray*}
 	However, $\rho(a_2,T(a_1)^n)=1$ since $T(a_1)\in B$, and hence we obtain 
\begin{equation} \label{chi rel 2}
\prod_{a_2\in A}\rho(a_2,T(a_1))^n=1.
\end{equation}
Using \eqref{Mcond}, we have
\begin{eqnarray}
\prod_{a_2\in A}\tau_2(T(a_1), T(a_2) )^{-n}&=&\prod_{a_2\in A} \big(B(T(a_1)T(a_2))~B(T(a_1))^{-1}~B(T(a_2))^{-1} \big)\\  \label{chi rel 3} \notag
&=&B(T(a_1))^{-m}~\prod_{a_2\in A} \big(B(T(a_1)T(a_2))~B(T(a_2))^{-1}\big )\\ \notag
&=&B(T(a_1))^{-m}~\prod_{a_2\in A} \big(B(T(a_1\beta_{T(a_1)}(a_2)))B(T(a_2))^{-1}\big)\\ \notag
&=&B(T(a_1))^{-m},\quad\textrm{since}~\beta_{T(a_1)}\textrm{ is an automorphism of A}\\ \notag
&=&\theta_2(T(a_1))^{-1}. \notag
\end{eqnarray}
Using  \eqref{chi rel 1},  \eqref{chi rel 2} and  \eqref{chi rel 3}, we obtain
\begin{equation}\label{chi coboundary}
\chi(a_1)^{mn}=S^0(\theta_1(a_1))~\theta_2(T(a_1))^{-1}
\end{equation}
for all $a_1\in A$. The theorem now follows due to \eqref{theta1 theta2 coboundary}, \eqref{rho coboundary} and \eqref{chi coboundary}.
 \end{proof}
 For each integer $n>1$, let us set $G_n=\{z\in \mathbb{C} \mid z^n=1\}$, the group of $n$-th roots of unity.
 
 \begin{thm}\label{VI2}
Let $\mathcal{A}=(A,B,\beta,T)$ be a relative Rota-Baxter group. Then every element of $M_{RRB}(\mathcal{A})$ of order dividing $n$  has a representative $(\tau_1,\tau_2,\rho,\chi)$ such that $\IM(\tau_1),\IM(\tau_2),\IM(\rho),\IM(\chi)\subseteq G_{n}$. Moreover, if $\mathcal{A}$ is finite, then $M_{RRB}(\mathcal{A})$ is finite.
 \end{thm}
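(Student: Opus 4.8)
The plan is to mirror the classical argument for the Schur multiplier of a finite group, the crucial input being that the coefficient group $\mathbb{C}^{\times}$ is divisible, so that $n$-th roots always exist. For the first assertion I would begin with a class $[(\tau_1,\tau_2,\rho,\chi)] \in M_{RRB}(\mathcal{A})$ of order dividing $n$. By definition this says $(\tau_1,\tau_2,\rho,\chi)^n=(\tau_1^n,\tau_2^n,\rho^n,\chi^n) \in \B^2_{RRB}(\mathcal{A},\mathcal{C})$, so there exist maps $\theta_1:A\to\mathbb{C}^{\times}$ and $\theta_2:B\to\mathbb{C}^{\times}$ (normalized so that $\theta_1(1)=\theta_2(1)=1$) satisfying the coboundary relations \eqref{t1}--\eqref{t4} with $\tau_1^n,\tau_2^n,\rho^n,\chi^n$ in place of $\tau_1,\tau_2,\rho,\chi$; recall that here $S^0$ is the identity map. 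Using divisibility of $\mathbb{C}^{\times}$, I would choose maps $\psi_1:A\to\mathbb{C}^{\times}$ and $\psi_2:B\to\mathbb{C}^{\times}$ with $\psi_1^n=\theta_1$, $\psi_2^n=\theta_2$ and $\psi_1(1)=\psi_2(1)=1$. These determine a coboundary $(\partial^1\psi_1,\partial^1\psi_2,\mu_1,\mu_2)\in \B^2_{RRB}(\mathcal{A},\mathcal{C})$, where $\mu_1$ and $\mu_2$ are built from $\psi_1,\psi_2$ exactly as $\lambda_1,\lambda_2$ are built from $\theta_1,\theta_2$ in \eqref{t3}--\eqref{t4}.

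Next I would replace the representative by $(\tau_1',\tau_2',\rho',\chi') := (\tau_1,\tau_2,\rho,\chi)\,(\partial^1\psi_1,\partial^1\psi_2,\mu_1,\mu_2)^{-1}$, which lies in the same cohomology class since it differs from $(\tau_1,\tau_2,\rho,\chi)$ by a coboundary, and which is again a genuine cocycle because $\Z^2_{RRB}(\mathcal{A},\mathcal{C})$ is a group containing $\B^2_{RRB}(\mathcal{A},\mathcal{C})$. The key point is the elementary identity $(\partial^1\psi_1)^n=\partial^1(\psi_1^n)=\partial^1\theta_1=\tau_1^n$ in the abelian group $\mathbb{C}^{\times}$, together with the analogous identities $(\partial^1\psi_2)^n=\tau_2^n$, $\mu_1^n=\rho^n$ and $\mu_2^n=\chi^n$. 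Raising each component of $(\tau_1',\tau_2',\rho',\chi')$ to the $n$-th power then yields $1$, so $\IM(\tau_1'),\IM(\tau_2'),\IM(\rho'),\IM(\chi')\subseteq G_n$, which is exactly what is required.

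For the finiteness statement, suppose $\mathcal{A}$ is finite and set $N=|A||B|$. By Theorem \ref{VI1} the exponent of $M_{RRB}(\mathcal{A})$ divides $N$, so every element of $M_{RRB}(\mathcal{A})$ has order dividing $N$. Applying the first part with $n=N$, every cohomology class admits a representative $(\tau_1,\tau_2,\rho,\chi)$ whose four component maps take values in the finite group $G_N$. Since $A$ and $B$ are finite, the collections of maps $A\times A\to G_N$, $B\times B\to G_N$, $A\times B\to G_N$ and $A\to G_N$ are each finite, so there are only finitely many such quadruples; as every class has a representative among them, $M_{RRB}(\mathcal{A})$ is finite.

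The computations are routine once divisibility of $\mathbb{C}^{\times}$ is invoked, so I do not expect a genuine obstacle; the only point that demands care is the bookkeeping confirming that the adjusting quadruple $(\partial^1\psi_1,\partial^1\psi_2,\mu_1,\mu_2)$ really belongs to $\B^2_{RRB}(\mathcal{A},\mathcal{C})$ (so the replacement stays in the same class) and that its $n$-th power agrees componentwise with $(\tau_1^n,\tau_2^n,\rho^n,\chi^n)$, for which one must track the roles of $\beta$, $T$ and the identity operator $S^0$ in \eqref{t3}--\eqref{t4}.
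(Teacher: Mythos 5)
Your proposal is correct and follows essentially the same route as the paper: both use divisibility of $\mathbb{C}^{\times}$ to extract $n$-th roots of the coboundary data witnessing $(\tau_1^n,\tau_2^n,\rho^n,\chi^n)\in \B^2_{RRB}(\mathcal{A},\mathcal{C})$, adjust the representative by the resulting coboundary, and then deduce finiteness from Theorem \ref{VI1} by counting maps into $G_{|A||B|}$. The only cosmetic difference is that the paper takes roots of the inverses $k_1^{-1},k_2^{-1}$ and multiplies, while you take roots of $\theta_1,\theta_2$ and divide.
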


 \begin{proof}
Let $(\tau_1,\tau_2,\rho,\chi)$ be a 2-cocycle representing an element of $M_{RRB}(\mathcal{A})$ of order dividing $n$. Then there exist maps $k_1:A\to \mathbb{C}^{\times}$ and $k_2:B\to \mathbb{C}^{\times}$ with $k_1(1)=1$, $k_2(1)=1$ and $(\tau_1^n,\tau_2^n,\rho^n,\chi^n)=(\partial^1(k_1),\partial^1(k_2),\lambda_1,\lambda_2)$, where $\lambda_1, \lambda_2$ are given by
 	\begin{eqnarray*}
 		\lambda_1(a_1,b_1)&=&k_1(a_1)k_1(\beta_{b_1}(a_1))^{-1},\\
 		\lambda_2(a_1)&=&S^0(k_1(a_1))k_2(T(a_1))^{-1}
 	\end{eqnarray*}
 for all $a_1\in A$ and $b_1\in B$.
\par 
We define maps $\theta_1:A \to \mathbb{C}^\times$ and $\theta_2:B \to \mathbb{C}^\times$ by first setting $\theta_1(1)=1$ and $\theta_2(1)=1$. For $a\in A\setminus\{1\}$ and $b\in B\setminus\{1\}$, let $\theta_1(a)$ be the $n$-th root of  $k_1(a)^{-1}$ and $\theta_2(b)$ be the $n$-th root of $k_2(b)^{-1}$. Define the maps $\tau'_1,\tau'_2,\rho',\chi'$ such that 
 	\begin{eqnarray*}
 		\tau'_1(a_1,a_2)&=&\tau_1(a_1,a_2) \theta_1(a_2)\theta_1(a_1a_2)^{-1} \theta_1(a_1),\\
 		\tau'_2(b_1,b_2)&=&\tau_2(b_1,b_2) \theta_2(b_2)\theta_2(b_1b_2)^{-1}\theta_2(b_1),\\
 		\rho'_1(a_1,b_1)&=&\rho_1(a_1,b_1)\theta_1(a_1)\theta_1(\beta_{b_1}(a_1))^{-1},\\
 		\chi'(a_1)&=&\chi(a_1)~S^0(\theta_1(a_1))\theta_2(T(a_1))^{-1}
 	\end{eqnarray*}
for $a_1, a_2 \in A$ and $b_1, b_2 \in B$.	It can be checked that $(\tau'_1,\tau'_2,\rho',\chi')$ is a 2-cocycle. This shows that $(\tau'_1,\tau'_2,\rho',\chi')$ and $(\tau_1,\tau_2,\rho,\chi)$ are cohomologous 2-cocycles. Moreover, we have
 	\begin{equation*}
 		\tau'_1(a_1,a_2)^{n}=\tau'_2(b_1,b_2)^{n}=\rho'(a_1,b_1)^{n}=\chi'(a_1)^n=1,
 	\end{equation*}
which establishes the first assertion.
\par
Suppose that $\mathcal{A}$ is finite with $|A|=m$ and $|B|=n$. By Theorem \ref{VI1}, every element of $M_{RRB}(\mathcal{A})$ has order dividing $mn$. In view of the first assertion,  every element of $M_{RRB}(\mathcal{A})$ of order dividing $mn$ is determined by a map $A\times A\times B\times B\longrightarrow G_{mn}\times G_{mn}\times G_{mn}\times G_{mn}$. Since there are only a finite number of such maps, it follows that $M_{RRB}(\mathcal{A})$ is finite.
 \end{proof}
 
In what follows, $G'$ denote the commutator subgroup of a group $G$. We now introduce the commutator subgroup of a relative Rota-Baxter group \cite[Definition 6.5]{NM1}.
 
 \begin{defn}\label{Commutator}
The commutator subgroup of a relative Rota-Baxter group $(H, G, \phi, R)$ is defined to be the relative Rota-Baxter group $(H^{\phi}, G, \phi|, R|)$, where $H^{\phi}$ is the subgroup of $H$ generated by its commutator subgroup $H'$ and the subgroup $H^{(2)}$, which itself is generated by the set  $\{\phi_{g}(h)h^{-1} \mid  h \in H  \mbox{ and }  g \in G \}$.  We denote the commutator subgroup by $(H, G, \phi, R)^{\prime}$.
 \end{defn}

Recall that, the commutator $H^\prime$ of a skew left brace $(H, \cdot, \circ)$ is defined as the subgroup of $H^{(\cdot)}$ generated by the commutator subgroup of $H^{(\cdot)}$ and the subgroup generated by the set $\{a^{-1} \cdot (a \circ b) \cdot b^{-1} \mid a, b \in H\}$. Below we examine \cite[Lemma 3.2]{TV23} in the context of relative Rota-Baxter groups.

 \begin{lemma}\label{SC1}
 	Let $\mathcal{A}=(A,B,\beta,T)$ be a relative Rota-Baxter group and $\mathcal{K}=(K,L, \alpha,S )$ a relative Rota-Baxter subgroup of $\mathcal{A}$ such that $S$ is surjective. If $\mathcal{K}\subseteq \mathcal{A}'$, then every homomorphism $\mathcal{A}\to \mathcal{C}$ restricts to the trivial homomorphism $\mathcal{K} \to \mathcal{C}$ .
 \end{lemma}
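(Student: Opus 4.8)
The plan is to unravel what a homomorphism $(\psi,\eta)\colon\mathcal{A}\to\mathcal{C}$ amounts to concretely and then exploit that $\mathbb{C}^\times$ is abelian, exactly mirroring the classical fact that a homomorphism from a group into an abelian group kills the commutator subgroup. Writing out the compatibility conditions \eqref{rbb datum morphism} for the target $\mathcal{C}=(\mathbb{C}^{\times},\mathbb{C}^{\times},\alpha^0,S^0)$, where $\alpha^0$ is trivial and $S^0=\mathrm{id}$, such a homomorphism consists of group homomorphisms $\psi\colon A\to\mathbb{C}^\times$ and $\eta\colon B\to\mathbb{C}^\times$ subject to
\[
\eta\circ T=\psi\qquad\text{and}\qquad \psi\circ\beta_b=\psi\ \text{ for all }b\in B.
\]

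First I would show that $\psi$ vanishes on $A^{\beta}$. Since $\mathbb{C}^\times$ is abelian, $\psi$ is trivial on the commutator subgroup $A'$. The second compatibility relation gives $\psi(\beta_b(a)a^{-1})=\psi(\beta_b(a))\,\psi(a)^{-1}=1$ for all $a\in A$ and $b\in B$, so $\psi$ is trivial on the subgroup $A^{(2)}$ generated by $\{\beta_b(a)a^{-1}\mid a\in A,\ b\in B\}$. As $A^{\beta}=\langle A',A^{(2)}\rangle$ by Definition \ref{Commutator}, this yields $\psi(A^{\beta})=1$. The hypothesis $\mathcal{K}\subseteq\mathcal{A}'=(A^{\beta},B,\beta|,T|)$ forces $K\leq A^{\beta}$ (the inclusion $L\leq B$ being automatic since $\mathcal{K}\leq\mathcal{A}$), and hence $\psi|_K$ is trivial.

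It then remains to control the second component $\eta|_L$, and this is the only place where the surjectivity of $S=T|_K$ is needed. Every $\ell\in L$ can be written as $\ell=S(k)=T(k)$ for some $k\in K$; applying the first compatibility relation together with the previous step gives $\eta(\ell)=\eta(T(k))=\psi(k)=1$, since $k\in K\subseteq A^{\beta}$. Thus $\eta|_L$ is trivial as well, and $(\psi,\eta)$ restricts to the trivial homomorphism $\mathcal{K}\to\mathcal{C}$.

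I expect no serious obstacle once the morphism conditions are correctly transcribed, as the argument is essentially a faithful translation of the group-theoretic statement into the relative Rota-Baxter setting. The single point demanding care is the interplay between the two components: unlike $\psi|_K$, the restriction $\eta|_L$ cannot be killed directly by an abelianness argument but must instead be pinned down through the relation $\eta\circ T=\psi$, which is precisely why the surjectivity of $S$ is indispensable. I would therefore make sure the reading of $\mathcal{K}\subseteq\mathcal{A}'$ is the one giving $K\leq A^{\beta}$, and that the factorisation of an arbitrary $\ell\in L$ through $T$ is used explicitly.
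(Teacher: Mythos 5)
Your proof is correct and follows essentially the same route as the paper's: triviality of $\psi$ on $A'$ from the abelianness of $\mathbb{C}^{\times}$, triviality on the generators $\beta_b(a)a^{-1}$ from the relation $\psi\beta_b=\alpha^0_{\eta(b)}\psi=\psi$, and then $\eta|_L=1$ via $\eta T=S^0\psi$ together with the surjectivity of $S$. The only difference is that you spell out the reduction $\mathcal{K}\subseteq\mathcal{A}'\Rightarrow K\leq A^{\beta}$ slightly more explicitly than the paper does.
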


 \begin{proof}
Let $(\psi,\eta): (A,B,\beta,T) \to (\mathbb{C}^{\times},\mathbb{C}^{\times},\alpha^0,S^0)$ be a homomorphism of relative Rota-Baxter groups.
Obviously, for elements $hgh^{-1}g^{-1} \in A'$, we have $\psi(hgh^{-1}g^{-1})=1$. Since $\alpha^0$ is trivial, for $a \in A$, we have
$$ \psi \beta_b(a) =\alpha^0_{\eta(b)}\psi(a)= \psi(a),$$
and hence  $\psi(\beta_b(a)a^{-1})=\psi(a)\psi(a^{-1})=1$. This shows that the map $\psi$ restricted to $K$ is trivial. Further, since $\eta S(k)=\eta T(k)=S^0\psi(k)=1$ for all $k \in K$ and  $S$ is surjective, it follows that $\eta(l)=1$ for all $l\in L$. This proves the assertion.
\end{proof}

\begin{remark}
The converse of Lemma \ref{SC1} may not hold in general. For instance, let $\mathcal{A}=(A_5\times \mathbb{Z}_2, A_5,\beta, T)$, where $\beta$ is trivial and $T(a,b)= a$. Since $A_5$ is simple and non-abelian, every group homomorphism $\eta: A_5 \to \mathbb{C}^\times$ is trivial. It follows from \eqref{rbb datum morphism} that for any homomorphism $(\psi, \eta):\mathcal{A} \to \mathcal{C}$ of relative Rota-Baxter groups, the component group homomorphisms $\psi$ and $\eta$ are trivial.  Taking $\mathcal{K}=\mathcal{A}$, we see that every homomorphism  $\mathcal{A} \to \mathcal{C}$ restricts to a trivial homomorphism $\mathcal{K} \to \mathcal{C}$. However, $\mathcal{K}$ is not contained in $\mathcal{A}'$ since $\mathcal{A}'=(A_5\times \{1\}, A_5, \beta |,T|)$.
\end{remark}

However, under some condition, the converse of Lemma \ref{SC1} does holds.

\begin{lemma}\label{converse SC1}
Let $\mathcal{A}=(A,B,\beta,T)$ be a finite bijective relative Rota-Baxter group and $\mathcal{K}=(K,L, \alpha,S )$ a relative Rota-Baxter subgroup of $\mathcal{A}$ such that $S$ is surjective. Then every homomorphism $\mathcal{A}\to \mathcal{C}$ restricts to the trivial homomorphism $\mathcal{K} \to \mathcal{C}$ if and only if $\mathcal{K}\subseteq \mathcal{A}'$.
\end{lemma}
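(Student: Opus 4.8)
The plan is to handle the two implications separately. One of them, namely that $\mathcal{K}\subseteq\mathcal{A}'$ forces every homomorphism $\mathcal{A}\to\mathcal{C}$ to restrict trivially on $\mathcal{K}$, is exactly the content of Lemma \ref{SC1} (which is where the surjectivity of $S$ is used). So the real work is the converse, and I would prove its contrapositive: assuming $\mathcal{K}\not\subseteq\mathcal{A}'$, construct a homomorphism $\mathcal{A}\to\mathcal{C}$ that is nontrivial on $\mathcal{K}$. Since $\mathcal{A}'=(A^\beta,B,\beta|,T|)$ has full second component $B$, the inclusion $\mathcal{K}\subseteq\mathcal{A}'$ is equivalent to $K\subseteq A^\beta$; hence $\mathcal{K}\not\subseteq\mathcal{A}'$ produces some $k\in K\setminus A^\beta$ to be separated.

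The first step is to describe the homomorphisms $(\psi,\eta)\colon\mathcal{A}\to\mathcal{C}$ explicitly. Because $S^0$ is the identity and $\alpha^0$ is trivial, the relations \eqref{rbb datum morphism} collapse to $\psi=\eta\circ T$ together with $\psi(\beta_b(a))=\psi(a)$ for all $a\in A$ and $b\in B$. In particular the first component $\psi$ is a homomorphism $(A,\cdot)\to\mathbb{C}^\times$ that kills every commutator and every element $\beta_b(a)a^{-1}$, so it factors through the finite abelian group $A/A^\beta$. I would then show that, conversely, every character of $A/A^\beta$ occurs as the $\psi$-component of such a homomorphism.

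The key step---and the one genuinely using that $\mathcal{A}$ is bijective---is the construction of the matching second component. By Proposition \ref{R homo H to G} the map $T\colon A^{(\circ_T)}\to B$ is a group homomorphism; since $\mathcal{A}$ is bijective, $T$ is a bijection and hence an isomorphism, so $T^{-1}(b_1b_2)=T^{-1}(b_1)\circ_T T^{-1}(b_2)$. Given a character $\psi$ of $A/A^\beta$, the identity $a_1\circ_T a_2=a_1\cdot\beta_{T(a_1)}(a_2)$ together with $\psi(\beta_b(a))=\psi(a)$ yields $\psi(a_1\circ_T a_2)=\psi(a_1)\psi(a_2)$; that is, $\psi$ is multiplicative for $\circ_T$ as well. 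Consequently $\eta:=\psi\circ T^{-1}\colon B\to\mathbb{C}^\times$ is a well-defined group homomorphism satisfying $\eta\circ T=\psi$ and $\psi\circ\beta_b=\psi$, so that $(\psi,\eta)\in\Hom_{RRB}(\mathcal{A},\mathcal{C})$. Thus the $\psi$-components realize every character of $A/A^\beta$.

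To conclude, I take $k\in K\setminus A^\beta$; its image in the finite abelian group $A/A^\beta$ is nontrivial, so some character separates it from the identity, and by the previous step this character extends to a homomorphism $(\psi,\eta)\colon\mathcal{A}\to\mathcal{C}$ with $\psi(k)\neq 1$. This homomorphism does not restrict to the trivial homomorphism on $\mathcal{K}$, contradicting the hypothesis, which establishes the contrapositive. The main obstacle is exactly the well-definedness of $\eta$: it is here that bijectivity promotes $T$ to an isomorphism of the descendent group, and the verification that $\psi$ respects $\circ_T$ is the crucial computation that makes $\eta$ a homomorphism.
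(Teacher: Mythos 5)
Your proposal is correct and follows essentially the same route as the paper: the forward direction is delegated to Lemma \ref{SC1}, and the converse is proved by choosing a character of the finite abelian group $A/A^{\beta}$ that is nontrivial on an element of $K\setminus A^{\beta}$, lifting it to $\psi\colon A\to\mathbb{C}^{\times}$, and using bijectivity of $T$ to define $\eta=\psi\circ T^{-1}$. Your write-up is in fact slightly more careful than the paper's at two points: you invoke the standard separation property of characters rather than writing down an ad hoc character, and you spell out the $\circ_T$-multiplicativity computation that makes $\eta$ a group homomorphism, which the paper leaves implicit.
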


\begin{proof}
If $\mathcal{K}\subseteq \mathcal{A}'$, then by Lemma \ref{SC1} every homomorphism $\mathcal{A}\rightarrow\mathcal{C}$ restricts to the trivial homomorphism $\mathcal{K} \rightarrow\mathcal{C}$. Conversely, suppose that there exists $a\in K\cap (A\setminus A^{\beta})$, where $A^{\beta}$ is as defined in Definition \ref{Commutator}. Let $\overline{a}$ denote the coset of $a$ in $A/A^{\beta}$ and let $|\overline{a}|=k$, where $k \ge 2$. Let $\xi$ be a primitive $k$-th root of unity. Since $A/A^{\beta}$ is a finite abelian group, the map $A/A^{\beta} \to\mathbb{C}^{\times}$ that map $\overline{a} \mapsto \xi$ and $\bar{x} \to 1$ for all $\bar{x} \not\in \langle \bar{a} \rangle$, is a group homomorphism. The composition of this map with the natural surjection $A\rightarrow A/A^{\beta}$ yields a group 
homomorphism $\psi:A\to \mathbb{C}^{\times}$. Since $T:A \to B$ is a bijection, we can define a map $\eta:B\to\mathbb{C}^{\times}$ such that $\eta(T(a))=S^0(\psi(a))$ for all $a \in A$. Also, since $\beta_b(a)a^{-1}\in A^{\beta}$, it follows from the construction of $\psi$ that $\psi(\beta_b(a)a^{-1})=1$, and hence $\psi \beta_b(a)= \psi(a) =\alpha^0_{\eta(b)}\psi(a)$ for all $a \in A$. The identity $\psi \beta_b= \alpha^0_{\eta(b)}\psi$ implies that $\eta$ is actually a group homomorphism, and hence $(\psi,\eta): \mathcal{A} \to \mathcal{C}$ is a homomorphism of relative Rota Baxter groups such that $\psi(a)=\xi \neq1$. This proves the lemma.
\end{proof}

 \begin{thm}\label{existence of Transgression Isomorphism}
Every finite relative Rota-Baxter group admits at least one central extension such that its corresponding transgression map is an isomorphism.
 \end{thm}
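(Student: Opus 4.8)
The plan is to build the required extension directly out of the Schur multiplier $M := M_{RRB}(\mathcal{A})$, mimicking the classical group-theoretic argument through the Hochschild--Serre sequence of Theorem \ref{HSS}. By Theorem \ref{VI2}, $M$ is a finite abelian group; write it as a direct product $M = \langle c_1\rangle \times \cdots \times \langle c_r\rangle$ of cyclic groups with $c_i$ of order $n_i$. The first assertion of Theorem \ref{VI2} lets me choose, for each $i$, a representative cocycle $(\tau_1^{(i)}, \tau_2^{(i)}, \rho^{(i)}, \chi^{(i)})$ of $c_i$ all of whose components take values in the group $G_{n_i}$ of $n_i$-th roots of unity.

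For each $i$ set $\mathcal{K}_i = (G_{n_i}, G_{n_i}, \alpha^0, \operatorname{id})$, a trivial relative Rota-Baxter group with abelian underlying groups, and let $\mathcal{K} = \mathcal{K}_1 \times \cdots \times \mathcal{K}_r$. Since the chosen representative has values in $G_{n_i} = K_i = L_i$ and $S = \operatorname{id} = S^0$, conditions \eqref{RRBC1}--\eqref{RRBC3} hold verbatim with $\mathcal{K}_i$ as coefficient module, so $(\tau_1^{(i)}, \tau_2^{(i)}, \rho^{(i)}, \chi^{(i)})$ determines a class $\omega_i \in \Ho^2_{RRB}(\mathcal{A}, \mathcal{K}_i)$. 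Via the isomorphism $\prod_i \Ho^2_{RRB}(\mathcal{A}, \mathcal{K}_i) \cong \Ho^2_{RRB}(\mathcal{A}, \mathcal{K})$ of Proposition \ref{cohomology with product coeff}, the tuple $(\omega_1, \ldots, \omega_r)$ corresponds to a single class $\omega \in \Ho^2_{RRB}(\mathcal{A}, \mathcal{K})$; I take $\mathcal{E}$ to be the central extension of $\mathcal{A}$ by $\mathcal{K}$ that $\Psi_3$ associates to $\omega$.

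It remains to show that the transgression map $\Tra \colon \Hom_{RRB}(\mathcal{K}, \mathcal{C}) \to \Ho^2_{RRB}(\mathcal{A}, \mathcal{C}) = M$ attached to $\mathcal{E}$ is an isomorphism, and two identifications drive this. First, because $\mathcal{K}$ is trivial with $S = \operatorname{id}$ and $S^0 = \operatorname{id}$, a morphism $(g_1, g_2) \colon \mathcal{K} \to \mathcal{C}$ is exactly a pair with $g_2 = g_1$, so $\Hom_{RRB}(\mathcal{K}, \mathcal{C}) \cong \Hom\!\big(\prod_i G_{n_i}, \mathbb{C}^\times\big) \cong \prod_i \Hom(G_{n_i}, \mathbb{C}^\times)$, the $i$-th factor being cyclic of order $n_i$ generated by the powers of the tautological inclusion $\iota_i \colon G_{n_i} \hookrightarrow \mathbb{C}^\times$. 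Second, by definition of $\Tra$ one has $\Tra(g_1, g_2) = [(g_1 \tau_1, g_2 \tau_2, g_1 \rho, g_2 \chi)] = (g_1, g_2)_{*}\,\omega$, the image of $\omega$ under the coefficient map induced by $(g_1, g_2)$. Restricting to the $i$-th factor, $(\iota_i)_{*}\omega_i$ is represented by $(\tau_1^{(i)}, \ldots)$ now read in $\mathbb{C}^\times$, which is precisely $c_i$; hence $\Tra$ sends $\iota_i^{\,a}$ to $a\,c_i$. Assembling the factors, $\Tra$ is identified with the product of the natural pairings $\Hom(G_{n_i}, \mathbb{C}^\times) \xrightarrow{\sim} \langle c_i\rangle$, and is therefore an isomorphism onto $M$.

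The hardest part will be the bookkeeping in the last step: one must check that the product decomposition of $\Ho^2_{RRB}(\mathcal{A}, \mathcal{K})$ from Proposition \ref{cohomology with product coeff} and the product decomposition of $\Hom_{RRB}(\mathcal{K}, \mathcal{C})$ are compatible with $\Tra$, so that the global transgression genuinely splits as the product of the factor-wise pairings. Granting this, both injectivity and surjectivity of $\Tra$ follow from the non-degeneracy of the duality between a finite cyclic group and its character group, using that each $c_i$ has order exactly $n_i$. (When $M$ is trivial, i.e. $r = 0$, the construction returns the split extension and the statement holds vacuously, the transgression being the identity of the trivial group.)
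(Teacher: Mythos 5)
Your proposal is correct and follows essentially the same route as the paper: the same cyclic decomposition of $M_{RRB}(\mathcal{A})$, the same choice of root-of-unity-valued representatives via Theorem \ref{VI2}, the same product coefficient module assembled through Proposition \ref{cohomology with product coeff}, and the same identification $\Hom_{RRB}(\mathcal{K},\mathcal{C})\cong\Hom(K,\mathbb{C}^\times)\cong M_{RRB}(\mathcal{A})$. The only cosmetic difference is at the very end: the paper checks surjectivity by computing $\Tra((p_i,p_i))=c_i$ and then concludes by comparing cardinalities of finite groups, whereas you decompose $\Tra$ factor-wise into cyclic dualities, which costs a little extra bookkeeping but is equivalent.
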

 \begin{proof}
 	Let $\mathcal{A}=(A,B,\beta,T)$ be a finite relative Rota-Baxter group. By Theorem \ref{VI2}, $M_{RRB}(\mathcal{A})$ is a finite abelian group, and hence we can write 
$$ M_{RRB}(\mathcal{A})= \langle c_1\rangle \oplus \cdots \oplus \langle c_m\rangle,$$
an internal direct product of cyclic groups $\langle c_i\rangle$ of order $d_i$.  Again, by Theorem \ref{VI2}, each $c_i$ has a representative $(\tau_{1_i},\tau_{2_i},\rho_i,\chi_i) \in \Z_{RRB}^2(\mathcal{A}, \mathcal{C})$, where $\tau_{1_i}: A\times A\to \mathbb{C}^{\times}$, $\tau_{2_i}: B\times B\to \mathbb{C}^{\times}$, $\rho_i: A\times B\to \mathbb{C}^{\times}$ and $\chi_i: A\to \mathbb{C}^{\times}$  are maps  such that $\IM(\tau_{1_i}),\IM(\tau_{2_i}),\IM(\rho_i),\IM(\chi_i)\subseteq G_{d_i}$ for each $i$. Let $G=G_{d_1}\times\cdots\times G_{d_m}$. Let $\mathcal{G}_i=(G_{d_i},G_{d_i},\alpha^0|,S^0|)$ denote the relative Rota-Baxter subgroup of $\mathcal{C}$  and $\mathcal{G}=(G,\,G, \,\alpha^0|\times \cdots \times\alpha^0|, \, S^0|\times\cdots \times S^0|)$ denote their product. Since $M_{RRB}(\mathcal{A})\cong G$,  we have homomorphisms 
$$ r_i: \Ho^2_{RRB}\left(\mathcal{A},\mathcal{G}_i\right) \longrightarrow G$$
induced by the inclusion $G_{d_i}\hookrightarrow \mathbb{C}^{\times}$. For each $i$, let $\kappa_i\in \Ho^2_{RRB}(\mathcal{A},\mathcal{G}_i)$ be 
 	the equivalence class of $(\tau_{1_i},\tau_{2_i},\rho_i,\chi_i)$ considered as an element of $\Z^2_{RRB}(\mathcal{A},\mathcal{G}_i)$. Then 
 	$c_i=r_i(\kappa_i)$.  Let $\kappa\in \Ho^2_{RRB}(\mathcal{A},\mathcal{G})$ be the image of $\left(\kappa_1,\dots , \kappa_m\right)$ under the isomorphism 
$$ \prod_{i=1}^m \Ho^2_{RRB}\left(\mathcal{A},\mathcal{G}_i\right) \cong \Ho^2_{RRB}\left(\mathcal{A},\mathcal{G}\right)$$
given by Proposition \ref{cohomology with product coeff}. Suppose that 
$$
\mathcal{E}: \quad {\bf 1} \to \mathcal{G}=(G,\,G, \,\alpha^0|\times \cdots \times\alpha^0|, \,S^0|\times\cdots \times S^0|) \stackrel{(i_1, i_2)}{\longrightarrow} \mathcal{H}= (H,G, \phi, R) \stackrel{(\pi_1, \pi_2)}{\longrightarrow} \mathcal{A}=(A,B, \beta, T) \to {\bf 1}
$$
 is a central extension associated with $\kappa$. We claim that $\mathcal{E}$ is the desired central extension. Let 	
 $$ \Tra : {\Hom_{RRB}{(\mathcal{G},\mathcal{C})}}{\longrightarrow}M_{RRB}(\mathcal{A})$$
be the transgression homomorphism associated with $\kappa$ and $p_i: G\to \mathbb{C}^{\times}$ be the projection given by $p_i(z_1,\dots, z_m)=z_i$. Then, $\Tra\left((p_i,p_i)\right) = c_i$, and hence $\Tra$ is surjective. Since both $\mathcal{G}$ and $\mathcal{C}$ are bijective relative Rota-Baxter groups, using \cite[Proposition 4.5]{NM1} and the well-known fact that $\Hom_{Group}(G, \mathbb{C}^{\times})\cong G$, we obtain
$$ \Hom_{RRB}\left(\mathcal{G},\mathcal{C}\right)\cong \Hom_{Group}(G, \mathbb{C}^{\times})\cong G \cong M_{RRB}\left(\mathcal{A}\right).
$$
Since $M_{RRB}(\mathcal{A})$ is finite, it follows that $\Tra$ is an isomorphism. 
 \end{proof}
 \medskip

 \section{Relationship between central extensions and Schur covers}\label{isoclinic Schur covers}
In this section, we investigate the connection between two central extensions constructed in Theorem \ref{existence of Transgression Isomorphism} and introduce the concept of a Schur cover for a relative Rota-Baxter group.
\par

Let $(H, G, \phi, R)$ be a relative Rota-Baxter group. Then there are maps $$\omega_H, \omega^{\phi}_H: \big( H/ \Z^{\phi}_{R}(H)\big) \times \big(H/ \Z^{\phi}_{R}(H)\big)  \rightarrow H^\phi$$ defined by
	$$\omega_{H}(\overline{h}_1, \overline{h}_2)=h_1 h_2 h_1^{-1} h_2^{-1} \quad \textrm{and} \quad \omega^{\phi}_{H}(\overline{h}_1, \overline{h}_2)= \phi_{R(h_1)}(h_2) h_2^{-1}$$
for all $h_1, h_2 \in H$.

For a relative Rota-Baxter group $(H, G, \phi, R)$, consider the relative Rota-Baxter group $\I (H, G, \phi, R) := (H, R(H), \phi|, R|)$. It is evident that the skew left braces induced by $(H, G, \phi, R)$ and $\I (H, G, \phi, R)$ are the same.

\begin{defn}
	Two relative Rota-Baxter groups  $(H, G, \phi, R)$ and $(K, L, \varphi, S)$ are isoclinic  if there are isomorphisms of relative Rota-Baxter groups 
	$$(\psi_1, \eta_1): (H, G, \phi, R) / \Z(H, G,  \phi, R ) \longrightarrow (K, L, \varphi, S) / \Z(K, L, \varphi, S)$$
	and 
	$$(\psi_2, \eta_2): (H, G, \phi, R)^\prime \longrightarrow (K, L, \varphi, S)^\prime$$
	such that the following diagram commutes
	\begin{align}\label{cd}
		\begin{CD}
			H^{\phi}  @<{\omega_H}<<(H/ \Z^{\phi}_{R}(H)) \times (H/ \Z^{\phi}_{R}(H))@>{\omega^{\phi}_H}>> H^{\phi} \\
			@V{\psi_2}VV @V{\psi_1 \times \psi_1}VV @V{\psi_2}VV\\
			K^{\varphi} @<{\omega_K}<< (K/ \Z^{\varphi}_{S}(K)) \times (K/ \Z^{\varphi}_{S}(K))@>{\omega^{\varphi}_K}>> K^{\varphi}.
		\end{CD}
	\end{align}
\end{defn}

\begin{defn}
The relative Rota-Baxter group $(H, G, \phi, R)$ is  weakly isoclinic to the relative Rota-Baxter group $(K, L, \varphi, S)$  if there is an isomorphism 
	$$(\psi_1, \eta_1): (H, G, \phi, R) / \Z(H, G,  \phi, R ) \longrightarrow (K, L, \varphi, S) / \Z(K, L, \varphi, S)$$
	and  a homomorphism
	$$(\psi_2, \eta_2): \I((H, G, \phi, R)^\prime) \longrightarrow \I ((K, L, \varphi, S)^\prime)$$
	 of relative Rota-Baxter groups 	such that $\psi_2$ is an isomorphism of groups and  the following diagram commutes
	\begin{align}\label{cd1}
		\begin{CD}
			H^{\phi}  @<{\omega_H}<<(H/ \Z^{\phi}_{R}(H)) \times (H/ \Z^{\phi}_{R}(H))@>{\omega^{\phi}_H}>> H^{\phi} \\
			@V{\psi_2}VV @V{\psi_1 \times \psi_1}VV @V{\psi_2}VV\\
			K^{\varphi} @<{\omega_K}<< (K/ \Z^{\varphi}_{S}(K)) \times (K/ \Z^{\varphi}_{S}(K))@>{\omega^{\varphi}_K}>> K^{\varphi}.
		\end{CD}
	\end{align}
\end{defn}

We note that unlike isoclinism, weak isoclinism is not an equivalence relation on the class of relative Rota-Baxter groups. We recall the following result from \cite[Proposition 6.10]{NM1}.

\begin{prop}\label{indiso}
Let $(H,G,\phi,R)$ and $(K,L,\varphi,S)$ be isoclinic relative Rota-Baxter groups. Then the following assertions hold:
\begin{enumerate}
\item $\im(R)/ (\im(R) \cap \Ker(\phi)) \cong \im(S)/( \im(S) \cap \Ker(\varphi))$.
\item $\langle \phi_{R(h_1)}(h_2)h_2^{-1} \mid h_1, h_2 \in H \rangle \cong \langle \varphi_{S(k_1)}(k_2)k_2^{-1} \mid k_1, k_2 \in K \rangle$.
\item  $H/ \Z^{\phi|}_{R|}(H) \cong K/ \Z^{\varphi|}_{S|}(K)$.
\item $\I(H,G,\phi,R)$ and $\I(K,L,\varphi,S)$ are isoclinic relative Rota-Baxter groups.
\end{enumerate}
\end{prop}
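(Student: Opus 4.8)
The plan is to read off three component isomorphisms from the isoclinism data and then transport them through the $\I$-construction. Write the isoclinism as a pair: group isomorphisms $\psi_1\colon H/\Z^{\phi}_{R}(H)\to K/\Z^{\varphi}_{S}(K)$ and $\eta_1\colon G/\Ker(\phi)\to L/\Ker(\varphi)$ assembling into an isomorphism of central quotients (so $\eta_1\overline{R}=\overline{S}\,\psi_1$ by \eqref{rbb datum morphism}), together with group isomorphisms $\psi_2\colon H^{\phi}\to K^{\varphi}$ and $\eta_2\colon G\to L$ assembling into an isomorphism of commutator subgroups, all subject to the commuting squares \eqref{cd}.

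For part (1), note that $\overline{R}(\overline{h})=\overline{R(h)}$ forces $\im(\overline{R})=R(H)\Ker(\phi)/\Ker(\phi)\cong \im(R)/(\im(R)\cap\Ker(\phi))$. The identity $\eta_1\overline{R}=\overline{S}\,\psi_1$ and the surjectivity of $\psi_1$ give $\eta_1(\im(\overline{R}))=\im(\overline{S})$, so $\eta_1$ restricts to the required isomorphism. For part (2), since $\omega^{\phi}_H(\overline{h}_1,\overline{h}_2)=\phi_{R(h_1)}(h_2)h_2^{-1}$, the subgroup in the statement is $\langle\im(\omega^{\phi}_H)\rangle$; the right square of \eqref{cd} reads $\psi_2\,\omega^{\phi}_H=\omega^{\varphi}_K(\psi_1\times\psi_1)$, and because $\psi_1$ is onto while $\psi_2$ is a group isomorphism, $\psi_2$ carries $\langle\im(\omega^{\phi}_H)\rangle$ isomorphically onto $\langle\im(\omega^{\varphi}_K)\rangle=\langle\varphi_{S(k_1)}(k_2)k_2^{-1}\rangle$.

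Part (3) is the least formal step. Here I would first identify $\Z^{\phi|}_{R|}(H)$, the first component of $\Z(\I(H,G,\phi,R))$, with the annihilator $\Ann(H_R)$ of the induced skew left brace, using $\lambda_a=\phi_{R(a)}$. The key point is that membership can be read off the maps $\omega_H,\omega^{\phi}_H$: for $h\in H$ one has $h\in\Z^{\phi|}_{R|}(H)$ if and only if $\omega_H(\overline{h},\overline{h}')=\omega^{\phi}_H(\overline{h},\overline{h}')=\omega^{\phi}_H(\overline{h}',\overline{h})=1$ for all $h'$, the three conditions encoding $h\in\Z(H)$, $h\in\Ker(\phi R)$ and $h\in\Fix(\phi|)$. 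Since $\Z^{\phi}_{R}(H)\subseteq\Z^{\phi|}_{R|}(H)$, this describes the subgroup $\Z^{\phi|}_{R|}(H)/\Z^{\phi}_{R}(H)$ of $H/\Z^{\phi}_{R}(H)$ purely through the vanishing of $\omega_H$ and $\omega^{\phi}_H$. As $\psi_2$ is injective and $\psi_1$ is bijective, \eqref{cd} then shows that $\overline{h}$ meets these conditions precisely when $\psi_1(\overline{h})$ meets the corresponding ones for $K$; hence $\psi_1$ maps $\Z^{\phi|}_{R|}(H)/\Z^{\phi}_{R}(H)$ onto $\Z^{\varphi|}_{S|}(K)/\Z^{\varphi}_{S}(K)$ and descends to the isomorphism $H/\Z^{\phi|}_{R|}(H)\cong K/\Z^{\varphi|}_{S|}(K)$.

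Finally, for part (4) I would assemble the pieces for $\I(H,G,\phi,R)=(H,R(H),\phi|,R|)$, whose center has first component $\Z^{\phi|}_{R|}(H)$ and whose commutator subgroup has first component $H^{\phi|}=\langle\im(\omega_H),\im(\omega^{\phi}_H)\rangle$. The descended $\psi_1$ from (3) and the restricted $\eta_1$ from (1) furnish the isomorphism of central quotients, while $\psi_2|_{H^{\phi|}}$ from (2) (with $\psi_2(H')=K'$ coming from the left square of \eqref{cd}) gives the $H$-component isomorphism of commutator subgroups; since $\omega_H,\omega^{\phi}_H$ factor through $H/\Z^{\phi|}_{R|}(H)$, the commuting squares for $\I(H,G,\phi,R)$ are inherited from \eqref{cd}. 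The remaining and most delicate ingredient is a group isomorphism $R(H)\cong S(K)$ serving as the $G$-component of the commutator subgroups, and I expect this to be the main obstacle. In the bijective case it is immediate, since then $R(H)=G$, $S(K)=L$ and $\eta_2$ already does the job. In general I would derive it from the common induced skew left brace: $R\colon H^{(\circ_R)}\to R(H)$ is a surjective homomorphism, so $R(H)\cong H^{(\circ_R)}/\Ker(R)$ and similarly for $S$, and I would match these quotients through $\eta_2$ after verifying that $\eta_2(R(H))=S(K)$, finally checking the Rota-Baxter compatibilities $\eta_2'R|=S|\psi_2'$ and $\psi_2'\phi|_g=\varphi|_{\eta_2'(g)}\psi_2'$ to conclude that $\I(H,G,\phi,R)$ and $\I(K,L,\varphi,S)$ are isoclinic.
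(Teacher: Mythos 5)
The paper does not actually prove this proposition: it is recalled verbatim from \cite[Proposition 6.10]{NM1}, so there is no internal proof to compare against. Judged on its own terms, your arguments for parts (1), (2) and (3) are correct and complete. In particular, your characterisation of $\Z^{\phi|}_{R|}(H)$ through the simultaneous vanishing of $\omega_H(\overline{h},-)$, $\omega^{\phi}_H(\overline{h},-)$ and $\omega^{\phi}_H(-,\overline{h})$ (encoding $\Z(H)$, $\Ker(\phi R)$ and $\Fix(\phi|_{R(H)})$ respectively) is exactly what makes $\psi_1$ descend, and it is consistent with how the present paper later uses assertions (1)--(3) in the proof of Proposition \ref{wiso1}.

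Part (4), however, contains a genuine gap, and it sits precisely where you suspect. The commutator subgroup of $\I(H,G,\phi,R)=(H,R(H),\phi|,R|)$ has second component $R(H)$, so an isoclinism of $\I(H,G,\phi,R)$ with $\I(K,L,\varphi,S)$ requires a group isomorphism $R(H)\cong S(K)$ compatible with the rest of the data. Your proposed derivation via $\eta_2$ does not go through: the compatibility $\eta_2 R|=S|\,\psi_2$ coming from the isomorphism of commutator subgroups holds only on $H^{\phi}$, so it yields $\eta_2(R(H^{\phi}))=S(K^{\varphi})$ but says nothing about $R(h)$ for $h\notin H^{\phi}$; and assertion (1) only controls $\im(R)$ modulo $\Ker(\phi)$. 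Worse, with the definitions exactly as stated in this paper the desired isomorphism can fail outright: take $H=K=\mathbb{Z}/2$, $G=L=\mathbb{Z}/4$, $\phi=\varphi$ trivial, $R$ the trivial homomorphism and $S$ the embedding $\mathbb{Z}/2\hookrightarrow\mathbb{Z}/4$. Both central quotients are trivial, both commutator subgroups equal $(1,\mathbb{Z}/4,\phi|,R|)$, and the diagram \eqref{cd} is vacuous, so the two relative Rota-Baxter groups are isoclinic; yet $R(H)=1$ while $S(K)\cong\mathbb{Z}/2$, so the commutator subgroups of $\I(H,G,\phi,R)$ and $\I(K,L,\varphi,S)$ are not isomorphic. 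This means (4) cannot be proved from the data you have available (nor, apparently, at all under the conventions of this paper); it must rely on hypotheses or slightly different definitions in \cite{NM1}. Your observation that the bijective case is immediate (there $R(H)=G$, $S(K)=L$ and $\eta_2$ serves) is correct, and it is telling that the present paper only ever invokes the $\I$-applied commutator subgroups $(H^{\phi},R(H^{\phi}),\phi|,R|)$ --- for which your $\eta_2$-argument does work --- in its notion of weak isoclinism and in Proposition \ref{wiso1}.
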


Next, we observe a generalization of the preceding result to weak isoclinism of relative Rota-Baxter groups.

\begin{prop}\label{wiso1}
If $(H,G,\phi,R)$ is  weakly isoclinic to  $(K,L,\varphi,S)$, then  $\I(H,G,\phi,R)$ is  weakly isoclinic to  $\I(K,L,\varphi,S).$
\end{prop}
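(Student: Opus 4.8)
The plan is to verify the definition of weak isoclinism directly for the pair $\I(H,G,\phi,R)$ and $\I(K,L,\varphi,S)$ by reusing the data that witnesses the weak isoclinism of the original groups. Recall that $\I(H,G,\phi,R)=(H,R(H),\phi|,R|)$, and crucially the additive group $H$, the automorphism $\phi$, and the operator $R$ are all unchanged when passing from $(H,G,\phi,R)$ to $\I(H,G,\phi,R)$; only the ambient group $G$ is replaced by the subgroup $R(H)$. Consequently the subgroup $H^{\phi}$, the fixed-point set $\Fix(\phi)$, the centre component $\Z^{\phi}_R(H)$, and the maps $\omega_H,\omega^{\phi}_H$ are literally the same objects for $(H,G,\phi,R)$ and for $\I(H,G,\phi,R)$. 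The same remark applies on the $K$ side. This is the observation I would isolate first, since it reduces almost everything to bookkeeping.

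First I would write down the isomorphism $(\psi_1,\eta_1)$ on the central quotients and the homomorphism $(\psi_2,\eta_2)$ on the commutator subgroups that witness the weak isoclinism of $(H,G,\phi,R)$ and $(K,L,\varphi,S)$, as supplied by the hypothesis. The $\psi$-components $\psi_1,\psi_2$ involve only the additive groups $H,K$ and so carry over verbatim. The content is therefore concentrated in the second-coordinate maps. For the central quotient I must produce an isomorphism $\eta_1': R(H)/\Z(R(H)\text{-part}) \to S(K)/(\cdots)$; here I would invoke Proposition \ref{indiso}(1), which for isoclinic groups already gives $\im(R)/(\im(R)\cap\Ker(\phi))\cong\im(S)/(\im(S)\cap\Ker(\varphi))$, and check that the analogous statement survives under the weaker hypothesis because $\psi_2$ is still required to be an isomorphism and the commutative diagram \eqref{cd1} constrains $\eta$ accordingly. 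For the commutator map I would define $\eta_2'$ as the restriction of $\eta_2$ to $R(H)\cap(\text{relevant commutator part})$, using that $\I$ commutes with the commutator-subgroup construction in the sense that $\I((H,G,\phi,R)') $ already has second component $R(H')$ or $\phi$-related image, which is exactly the codomain appearing in the definition of weak isoclinism of the $\I$-groups.

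The commutativity of diagram \eqref{cd1} for the $\I$-versions is then immediate: since $\omega_H,\omega^{\phi}_H,\psi_1,\psi_2$ are unchanged, the square that commutes for $(H,G,\phi,R)$ is syntactically the same square required for $\I(H,G,\phi,R)$. Thus no new diagram chase is needed — I would simply cite that the maps and the diagram coincide. I would also note the compatibility conditions \eqref{rbb datum morphism} for $(\psi_2,\eta_2')$ restricted to the smaller operator images, which hold because they held for the larger homomorphism and restriction preserves $\eta\,R=S\,\psi$ and $\psi\,\phi_g=\varphi_{\eta(g)}\psi$.

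\textbf{The main obstacle} I anticipate is the second-coordinate map $\eta$: one must confirm that $\eta_1$ restricts to a well-defined isomorphism between the appropriate quotients of $R(H)$ and $S(K)$, rather than between quotients of the full $G$ and $L$. This requires checking that $\eta_1$ carries the image $R(H)$ (modulo the central piece) onto $S(K)$ (modulo its central piece), which is plausible because the homomorphism property forces $\eta\,R=S\,\psi$ and $\psi$ is surjective onto the relevant additive quotient, but it is precisely the point where the passage to $\I$ could fail if $\eta_1$ moved $R(H)$ outside $S(K)$. I would therefore spend the bulk of the argument verifying this containment and its inverse, after which the remaining verifications are routine and follow the template of Proposition \ref{indiso}(4).
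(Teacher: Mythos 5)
Your reduction rests on the claim that $H^{\phi}$, $\Fix(\phi)$ and $\Z^{\phi}_{R}(H)$ are ``literally the same objects'' for $(H,G,\phi,R)$ and for $\I(H,G,\phi,R)=(H,R(H),\phi|,R|)$, so that the required commutative square is syntactically identical and no work is needed on the $\psi$-side. This is not true, and it is the gap in your argument. Passing from $G$ to the subgroup $R(H)$ shrinks the family of automorphisms $\phi_g$ entering these definitions: $\Fix(\phi|)=\{x\in H\mid \phi_g(x)=x \mbox{ for all } g\in R(H)\}$ contains $\Fix(\phi)$, possibly strictly, so $\Z^{\phi|}_{R|}(H)\supseteq \Z^{\phi}_{R}(H)$; dually, $H^{\phi|}$, generated by $H'$ together with $\{\phi_g(h)h^{-1}\mid h\in H,\ g\in R(H)\}$, is contained in $H^{\phi}$, again possibly strictly. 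Consequently the diagram to be verified for the $\I$-versions has $H^{\phi|}$ in its corners and $H/\Z^{\phi|}_{R|}(H)$ in the middle: it is a genuinely different diagram, obtained by further quotienting the middle term and restricting the corner terms. The step you dismiss as ``verbatim'' is precisely the content of the proposition: one must check that $\psi_1$ descends to an isomorphism $H/\Z^{\phi|}_{R|}(H)\to K/\Z^{\varphi|}_{S|}(K)$ and that $\psi_2$ restricts to an isomorphism $H^{\phi|}\to K^{\varphi|}$. The paper extracts exactly these facts from Proposition \ref{indiso}(2) and (3), after observing that their proofs go through under weak isoclinism; only then is commutativity of the new diagram inherited from the old one.

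Your concern about the $\eta$-components is legitimate and is handled in the paper essentially as you propose, via the proof of Proposition \ref{indiso}(1) for $\eta_1$ and by restriction for $\eta_2$; note also that the target of the second map is $\I\big((\I(H,G,\phi,R))'\big)=(H^{\phi|},R(H^{\phi|}),\phi|,R|)$, so $\eta_2$ must be restricted all the way to $R(H^{\phi|})$, not merely to $R(H)$. But as written, your argument has a hole on the $\psi$-side, which is where the actual substance of the statement lies.
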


\begin{proof}
Suppose that	$(H, G, \phi, R)$ is  weakly isoclinic to  $(K, L, \varphi, S)$. Let
$$(\psi_1, \eta_1): (H, G, \phi, R) / \Z(H, G,  \phi, R ) \longrightarrow (K, L, \varphi, S) / \Z(K, L, \varphi, S)$$
and
$$(\psi_2, \eta_2): \I((H, G, \phi, R)^\prime) \longrightarrow \I ((K, L, \varphi, S)^\prime)$$
be the desired homomorphisms satisfying the condition of weak isoclinism. As weak isoclinism partially fulfills the conditions of isoclinism, it ensues that the assertions $(1)$, $(2)$ and $(3)$ of Proposition \ref{indiso}  hold for weakly isoclinic relative Rota-Baxter groups. It follows from the proof of Proposition \ref{indiso}(1)  that $\eta_1$ maps $\im(R)/(\im(R) \cap \Ker(\phi))$ onto $\im(S)/(\im(S) \cap \Ker(\varphi))$. Similarly, proof of Proposition \ref{indiso}(3)  shows that $\psi_1$ induces an isomorphism $H/ \Z^{\phi|}_{R|}(H)$ onto $K/ \Z^{\varphi|}_{S|}(K)$. Thus,
$$(\bar{\psi}_1, \bar{\eta}_1|): \I(H, G, \phi, R) / \Z(\I(H, G,  \phi, R )) \longrightarrow \I(K, L, \varphi, S) / \Z(\I(K, L, \varphi, S))$$
is an isomorphism of relative Rota-Baxter groups, where $(\bar{\psi}_1, \bar{\eta}_1)$ is induced from $(\psi_1, \eta_1)$. Note that $ \I((H, G, \phi, R)^\prime)=(H^{\phi}, R(H^{\phi}),\phi|, R| )$ and $\I((K, L, \varphi, S)^\prime)=(K^{\varphi}, S(K^{\varphi}),\varphi|, S| )$. Assertion Proposition \ref{indiso}(2) gives a homomorphism $$(\psi_2|, \eta_2|): (H^{\phi|}, R(H^{\phi|}), \phi|, R|) \longrightarrow (K^{\varphi|}, S(K^{\varphi|}), \varphi|, S|),$$
where $(\psi_2|, \eta_2|)$ is the restriction of $(\psi_2, \eta_2)$ and $\psi_2|$ is an  isomorphism of groups. Note that $\I( (\I(H, G, \phi, R))^\prime)=(H^{\phi|}, R(H^{\phi|}), \phi|, R|)$ and $\I( (\I(K, L, \varphi, S))^\prime)=(K^{\varphi|}, S(K^{\varphi|}), \varphi|, S|)$. Hence, $\I(H,G,\phi,R)$ is  weakly isoclinic to  $\I(K,L,\varphi,S)$ due to commutativity of the diagram \eqref{cd1}.
\end{proof}

Isoclinism of skew left braces has been introduced recently in \cite{TV23}. Given a skew left brace $(H, \cdot, \circ)$, there are maps  $\theta_H, \theta^{*}_H: (H/ \Ann(H)) \times (H/ \Ann(H)) \longrightarrow H^\prime$ given by $$\theta_H(\overline{a}, \overline{b})= a \cdot b \cdot a^{-1} \cdot b^{-1}$$ and $$\theta^{*}_H(\overline{a}, \overline{b})= a^{-1} \cdot (a \circ b) \cdot a^{-1}.$$ Two skew left braces $(H, \cdot, \circ)$ and $(K, \cdot, \circ)$ are said to be isoclinic if there are isomorphisms $\xi_1 : H/ \Ann(H)\longrightarrow K/ \Ann(K)$ and $\xi_2: H^\prime \longrightarrow K^\prime$ such that the following diagram commutes
\begin{align}\label{cdsb}
	\begin{CD}
		H^\prime  @<{\theta_H}<<(H/ \Ann(H)) \times (H/ \Ann(H))@>{\theta^{*}_H}>> H^\prime \\
		@V{\xi_2}VV @V{\xi_1 \times \xi_1}VV @V{\xi_2}VV\\
		K^\prime @<{\theta_K}<< (K/ \Ann(K)) \times (K/ \Ann(K))@>{\theta^{*}_K}>> K^\prime.
	\end{CD}	
\end{align}

The next result extends  \cite[Theorem 6.11]{NM1} to weak isoclinism of relative Rota-Baxter groups.

\begin{thm}\label{almost isoclinism rrbg implies isoclinism slb}
If $(H,G,\phi,R)$ is weakly isoclinic to  $(K,L,\varphi,S)$, then their induced skew left braces $H_R$ and $K_S$ are isoclinic.
\end{thm}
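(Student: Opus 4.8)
The plan is to read off the skew left brace isoclinism directly from the weak isoclinism of the relative Rota-Baxter groups, after first replacing both groups by their associated $\I$-versions. The crucial feature of the $\I$-construction is that it leaves the induced skew left brace unchanged, so $H_R = H_{\I(H,G,\phi,R)}$ and $K_S=K_{\I(K,L,\varphi,S)}$, while passing from $(H,G,\phi,R)$ to $\I(H,G,\phi,R)=(H,R(H),\phi|,R|)$ shrinks the ambient group from $G$ to $R(H)$. Accordingly, the first step is to invoke Proposition \ref{wiso1} to conclude that $\I(H,G,\phi,R)$ is weakly isoclinic to $\I(K,L,\varphi,S)$, and to extract from its proof the explicit data: an isomorphism $(\bar\psi_1,\bar\eta_1|)$ of the central quotients and a homomorphism $(\psi_2|,\eta_2|)$ between the commutator $\I$-subgroups with $\psi_2|$ an isomorphism of groups, all fitting into a commutative diagram of the shape \eqref{cd1}.

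Next I would build the dictionary between the brace invariants of $H_R$ and the relative Rota-Baxter invariants of $\I(H,G,\phi,R)$, using $a\circ_R b=a\,\phi_{R(a)}(b)$ and hence $\lambda_a(b)=\phi_{R(a)}(b)$. Unwinding the definition of the annihilator gives
$$\Ann(H_R)=\Ker(\lambda)\cap \Z(H^{(\cdot)})\cap \Fix(\lambda)=\Z(H)\cap \Ker(\phi R)\cap \Fix(\phi|)=\Z^{\phi|}_{R|}(H),$$
so $\Ann(H_R)$ is precisely the $H$-component of $\Z(\I(H,G,\phi,R))$. Likewise the brace commutator $H_R'$, generated by $H'$ together with $\{a^{-1}\cdot(a\circ_R b)\cdot b^{-1}\}=\{\phi_{R(a)}(b)b^{-1}\}$, coincides with $H^{\phi|}$, the $H$-component of $\I(H,G,\phi,R)'$ once Definition \ref{Commutator} is read with the group $R(H)$ in place of $G$. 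Finally the brace structure maps coincide with the relative Rota-Baxter maps on the nose: $\theta_{H_R}(\bar a,\bar b)=aba^{-1}b^{-1}=\omega_H(\bar a,\bar b)$ and $\theta^{*}_{H_R}(\bar a,\bar b)=a^{-1}\cdot(a\circ_R b)\cdot b^{-1}=\phi_{R(a)}(b)b^{-1}=\omega^{\phi}_H(\bar a,\bar b)$, both now regarded on $(H/\Z^{\phi|}_{R|}(H))\times(H/\Z^{\phi|}_{R|}(H))$, and symmetrically for $K_S$.

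With these identifications in hand I would set $\xi_1:=\bar\psi_1$ and $\xi_2:=\psi_2|$. Since $(\bar\psi_1,\bar\eta_1|)$ is an isomorphism of relative Rota-Baxter groups and the quotient brace $H_R/\Ann(H_R)$ is the brace induced by $\I(H,G,\phi,R)/\Z(\I(H,G,\phi,R))$, Proposition \ref{rrb to slb homo} makes $\bar\psi_1$ an isomorphism of braces $H_R/\Ann(H_R)\to K_S/\Ann(K_S)$, while $\psi_2|$ is an isomorphism $H_R'\to K_S'$. Because the horizontal arrows of the brace diagram \eqref{cdsb} are literally the arrows $\omega_H,\omega^{\phi}_H$ of \eqref{cd1} under the dictionary above, and the vertical arrows agree under $\xi_1=\bar\psi_1$, $\xi_2=\psi_2|$, the commutativity of \eqref{cdsb} is inherited verbatim from the commutativity of \eqref{cd1} supplied by Proposition \ref{wiso1}. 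This exhibits $H_R$ and $K_S$ as isoclinic skew left braces.

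The step I expect to be the main obstacle is the discrepancy between $\Fix(\phi)$, taken over all of $G$, and $\Fix(\phi|)=\Fix(\lambda)$, taken only over $R(H)$. The weak isoclinism of the original groups supplies an isomorphism on $H/\Z^{\phi}_R(H)$, a quotient by a subgroup that may be strictly smaller than $\Ann(H_R)=\Z^{\phi|}_{R|}(H)$, and so it cannot be transported to the braces directly; passing to the $\I$-versions via Proposition \ref{wiso1} is exactly what forces $\Fix(\phi|)$ and closes this gap. A secondary point requiring care is the verification that $\psi_2$ restricts to an isomorphism $H_R'=H^{\phi|}\to K^{\varphi|}=K_S'$ onto the smaller commutator (not merely $H^{\phi}$), which again rests on the $\I$-reduction through Proposition \ref{indiso}(2), together with the routine check that the $\circ$-operation on $H_R/\Ann(H_R)$ matches the one induced by the quotient relative Rota-Baxter group.
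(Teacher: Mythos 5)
Your proposal is correct and follows essentially the same route as the paper: reduce to the $\I$-versions via Proposition \ref{wiso1}, identify $\Ann(H_R)=\Z^{\phi|}_{R|}(H)$, $H_R'=H^{\phi|}$ and the structure maps $\theta_{H_R}=\omega_H$, $\theta^{*}_{H_R}=\omega^{\phi}_H$ (the paper outsources these identifications to \cite[Proposition 6.2]{NM1} where you compute them directly), and then transport the commutative diagram \eqref{cd1} to \eqref{cdsb}. Your diagnosis of the $\Fix(\phi)$ versus $\Fix(\phi|)$ discrepancy as the reason the $\I$-reduction is needed is exactly the point.
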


\begin{proof}
If $(H,G,\phi,R)$ is weakly isoclinic to  $(K,L,\varphi,S)$, then by Proposition \ref{wiso1}, $\I(H,G,\phi,R)$ is weakly isoclinic to  $\I(K,L,\varphi,S)$. Thus, there exists an isomorphism 
	$$(\psi_1, \eta_1): \I(H, G, \phi, R) / \Z(\I(H, G,  \phi, R )) \longrightarrow \I(K, L, \varphi, S) / \Z(\I(K, L, \varphi, S))$$
	and  a homomorphism
	$$(\psi_2, \eta_2): \I((\I(H, G, \phi, R))^\prime) \longrightarrow \I ((\I(K, L, \varphi, S))^\prime)$$
	such that $\psi_2$ is an isomorphism of groups and  the following diagram commutes
	\begin{align}
		\begin{CD}\label{cd3}
			H^{\phi|}  @<{\omega_H}<<\big( H/ \Z^{\phi|}_{R|}(H)\big) \times \big( H/ \Z^{\phi|}_{R|} (H)\big)@>{\omega^{\phi|}_H}>> H^{\phi|} \\
			@V{\psi_2}VV @V{\psi_1 \times \psi_1}VV @V{\psi_2}VV\\
			K^{\varphi|} @<{\omega_K}<< \big(K/ \Z^{\varphi|}_{S|} (K)\big) \times \big(K/ \Z^{\varphi|}_{S|}(K) \big)@>{\omega^{\varphi|}_K}>> K^{\varphi|}.
		\end{CD}
	\end{align}
It follows from \cite[Proposition 6.2(3)]{NM1} that the skew left brace induced by $\I(H, G, \phi, R) / \Z(\I(H, G, \phi, R))$ is identical to $H_R/ \Ann(H_R)$. Notably, both $(H, G, \phi, R)$ and $\I(H, G, \phi, R)$ give rise to identical skew left braces. Furthermore, the skew left brace induced by $(\I(H, G, \phi, R))^\prime$ is identical to $H^\prime_R$. Thus, the skew left brace induced by $\I((\I(H, G, \phi, R))^\prime)$ is also identical to $H^\prime_R.$ Since every morphism of relative Rota Baxter groups induce a morphism of corresponding skew left braces, by utilizing \eqref{cd3}, we obtain the following commutative diagram
	$$
	\begin{CD}
		H_R^\prime  @<{\omega_H}<< \big(H_R/\Ann(H_R) \big)\times \big(H_R/\Ann(H_R) \big)@>{\omega^{\phi|}_H}>> H_R^\prime\\
		@V{\psi_2}VV @V{\psi_1 \times \psi_1}VV @V{\psi_2}VV\\
		K_S^\prime @<{\omega_K}<< \big(K_S/\Ann(K_S) \big) \times  \big(K_S/\Ann(K_S) \big)  @>{\omega^{\varphi|}_K}>> K_S^\prime.
	\end{CD}
	$$
	This shows that $H_R$ and $K_S$ are isoclinic.
\end{proof}

\begin{lemma}\label{Centrecondn}
Let $\mathcal{A}=(A,B,\beta,T)$ be a relative Rota-Baxter group and $\mathcal{K}=(K,L, \alpha,S )$ a trivial  $\mathcal{A}$-module. Let $(\tau_1,\tau_2,\rho,\chi)\in \Z^2_{RRB}(\mathcal{A},\mathcal{K})$ and $\mathcal{H}=(H,G,\phi,R)=(A\times_{\tau_1}K,B\times_{\tau_2}L,\phi,R)$ the corresponding extension. Then
\begin{eqnarray*}
\Z^{\phi}_{R}(H) &=&\{(a,k)\in H ~\mid~ a\in \Z(A),~ \tau_1(a,x)=\tau_1(x,a), ~\beta_{T(a)}(x)=x,\\
&&\rho(x,T(a))=1,~ \beta_b(a)=a, ~\rho(a,b)=1 ~ \textrm{for all}~ x \in A ~ \textrm{and}~ b\in B\},\\
\Ker(\phi) &=&\{(b,l)\in G ~\mid ~\beta_b(a)=a, ~\rho(a,b)=1~ \textrm{for all}~ a\in A\}.
\end{eqnarray*}
\end{lemma}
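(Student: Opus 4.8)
The plan is to verify the two displayed descriptions by directly unwinding the three conditions defining $\Z^{\phi}_{R}(H)$ together with the defining condition of $\Ker(\phi)$, using the explicit formulas \eqref{phidefnprop} and \eqref{Rformula} for the action and the operator on the extension, and the twisted multiplications on $A\times_{\tau_1}K$ and $B\times_{\tau_2}L$. The entire argument is a bookkeeping computation, so I would organise it by isolating each condition in turn.

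First I would compute $\Ker(\phi)$. An element $(b,l)\in B\times_{\tau_2}L$ lies in $\Ker(\phi)$ precisely when $\phi_{(b,l)}=\Id_H$. Since $\phi_{(b,l)}(a,k)=\big(\beta_b(a),\,\rho(a,b)k\big)$ by \eqref{phidefnprop}, and this value depends only on $b$ and not on $l$, the requirement $\phi_{(b,l)}(a,k)=(a,k)$ for all $(a,k)$ is equivalent to $\beta_b(a)=a$ and $\rho(a,b)=1$ for all $a\in A$. This gives at once the stated description of $\Ker(\phi)$.

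Next, recalling that $\Z^{\phi}_{R}(H)=\Z(H)\cap \Ker(\phi\,R)\cap \Fix(\phi)$, I would treat the three sets in this intersection separately. For $\Z(H)$: comparing $(a,k)(x,m)=\big(ax,\,km\,\tau_1(a,x)\big)$ with $(x,m)(a,k)=\big(xa,\,mk\,\tau_1(x,a)\big)$ and using that $K$ is abelian to cancel $km=mk$, membership in $\Z(H)$ is equivalent to $a\in\Z(A)$ together with $\tau_1(a,x)=\tau_1(x,a)$ for all $x\in A$. For $\Fix(\phi)$: applying the formula for the action again, $(a,k)\in\Fix(\phi)$ iff $\beta_b(a)=a$ and $\rho(a,b)=1$ for all $b\in B$. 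For $\Ker(\phi\,R)$: since $R(a,k)=\big(T(a),\,\chi(a)\,S(k)\big)$ by \eqref{Rformula} and $\phi_{(b,l)}$ is independent of the $L$-component, the condition $\phi_{R(a,k)}=\Id_H$ reduces to $\beta_{T(a)}(x)=x$ and $\rho(x,T(a))=1$ for all $x\in A$, with no constraint on $k$. Intersecting the three sets then yields exactly the stated description of $\Z^{\phi}_{R}(H)$.

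No step here is a genuine obstacle; the only points requiring attention are that $K$ is abelian, which is what permits the cancellation $km=mk$ in the centrality computation, and that the action $\phi_{(b,l)}$ does not depend on $l$, equivalently that $\phi|_L$ is trivial, which is what allows the $\Ker(\phi\,R)$ condition to depend on $a$ only through $T(a)$ and to be independent of $k$. Both observations are guaranteed by the hypothesis that $\mathcal{K}$ is a trivial $\mathcal{A}$-module and by the shape of \eqref{phidefnprop} and \eqref{Rformula}.
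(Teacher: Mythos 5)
Your proposal is correct and follows essentially the same route as the paper: the paper likewise computes $\Z(H)$, $\Fix(\phi)$, $\ker(\phi\,R)$, and $\Ker(\phi)$ separately by direct calculation from \eqref{phidefnprop}, \eqref{Rformula}, and the twisted multiplications, and then intersects the first three sets to obtain $\Z^{\phi}_{R}(H)$. Your explicit remarks on where commutativity of $K$ and the independence of $\phi_{(b,l)}$ from $l$ enter are accurate and consistent with the paper's (terser) argument.
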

\begin{proof}
Through direct calculations, we see that
\begin{eqnarray*}
\Z(H) &=& \{ (a,k) \in H~ \mid~  a \in \Z(A) \mbox{ and } \tau_1(a,x)=\tau_1(x,a) \mbox{ for all } x \in A  \},\\
\Fix(\phi) &=& \{ (a,k) \in H ~\mid~  \beta_b(a)=a  \mbox{ and } \rho(a,b)=1 \mbox{ for all } b\in B\},\\
\ker (\phi\; R) &=& \{ (a,k) \in H ~\mid~  \beta_{T(a)}(x)=x \mbox{ and } \rho(x,T(a))=1 \mbox{ for all } x \in A  \},\\ 
\Ker(\phi) &=& \{(b,l)\in G ~\mid ~\beta_b(a)=a, ~\rho(a,b)=1~ \textrm{for all}~ a\in A\}.
\end{eqnarray*}
The remaining assertion follows from the definition of $\Z^{\phi}_{R}(H)$.
\end{proof}

\begin{remark}\label{Generators}
The group $H^{\phi}$ is generated by the elements 
	$$ \big(a_1a_2a_1^{-1}a_2^{-1},\tau_1(a_1,a_2)\tau_1(a_1,a_1^{-1})^{-1}\tau_1(a_1a_2,a_1^{-1})\tau_1(a_2,a_2^{-1})^{-1}\tau_1(a_1a_2a_1^{-1},a_2^{-1}) \big),$$
	$$\big(\beta_b(a)a^{-1},\rho(a,b)\tau_1(a,a^{-1})^{-1}\tau_1(\beta_b(a),a^{-1}) \big)$$ and the generators of $G'$ are exactly the elements
	$$\big(b_1b_2b_1^{-1}b_2^{-1},\tau_2(b_1,b_2)\tau_2(b_1,b_1^{-1})^{-1}\tau_2(b_1b_2,b_1^{-1})\tau_2(b_2,b_2^{-1})^{-1}\tau_2(b_1b_2b_1^{-1},b_2^{-1})\big),$$ 
	where $a,a_1,a_2\in A$ and $b,b_1,b_2\in B$.
\end{remark}

\begin{thm}\label{thm:almost isoclinism}
Let $\mathcal{A} = (A, B, \beta, T)$ be a finite relative Rota-Baxter group. Let $$M_{RRB}(\mathcal{A})\cong \langle c_1\rangle\times\cdots\times\langle c_m\rangle$$ and $$\mathcal{K} =(K, L, \alpha, S) = \big( M_{RRB}(\mathcal{A}), \,M_{RRB}(\mathcal{A}), \, \alpha^0|\times\cdots \times \alpha^0|, \, S^0|\times\dots\times S^0| \big),$$ 
 where $ c_i$ is the complex $n_i$-th root of unity for each $i$ and $\mathcal{K}$ is the trivial relative Rota Baxter group viewed as a trivial  $\mathcal{A}$-module. If 
	\begin{equation*}\label{V1}
		\mathcal{E}_1 : \quad  {\bf 1} \longrightarrow \mathcal{K} \stackrel{(i_1, i_2)}{\longrightarrow} \mathcal{H}_1= (H_1,G_1, \phi_1, R_1) \stackrel{(\pi_1, \pi_2)}{\longrightarrow} \mathcal{A}\longrightarrow {\bf 1}
	\end{equation*} and 
	\begin{equation*}\label{V1}
		\mathcal{E}_2 : \quad  {\bf 1} \longrightarrow \mathcal{K} \stackrel{(i_1, i_2)}{\longrightarrow}\mathcal{H}_2=  (H_2,G_2, \phi_2, R_2) \stackrel{(\pi_1, \pi_2)}{\longrightarrow} \mathcal{A}\longrightarrow {\bf 1}
	\end{equation*}
are two central extensions  of  $\mathcal{A}$ by $\mathcal{K}$ such that their corresponding transgression maps $ \Tra_{\mathcal{H}_1}:\Hom_{RRB}(\mathcal{K},\mathcal{C})\longrightarrow M_{RRB}(\mathcal{A})$ and $\Tra_{\mathcal{H}_2}:\Hom_{RRB}(\mathcal{K},\mathcal{C})\longrightarrow M_{RRB}(\mathcal{A})$ are isomorphisms, then	$\mathcal{H}_1$ and $\mathcal{H}_2$ are weakly isoclinic.
\end{thm}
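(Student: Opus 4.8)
The plan is to read off both covers from their cocycle models and show that they carry identical commutator data, which is precisely what diagram \eqref{cd1} encodes. Fix $2$-cocycles so that $\mathcal{H}_i=(A\times_{\tau_1^{(i)}}K,\,B\times_{\tau_2^{(i)}}L,\phi_i,R_i)$ with $K=L=M:=M_{RRB}(\mathcal{A})$, and apply Lemma \ref{Centrecondn}. The key observation is that the conditions describing $\Z^{\phi_i}_{R_i}(H_i)$ and $\Ker(\phi_i)$ constrain only the $A$- and $B$-coordinates, leaving the $K$- and $L$-coordinates free; hence $\Z^{\phi_i}_{R_i}(H_i)=\bar Z_i\times K$ and $\Ker(\phi_i)=\bar W_i\times L$ for subgroups $\bar Z_i\subseteq\Z^{\beta}_{T}(A)$ and $\bar W_i\subseteq B$ cut out by the cocycle conditions, so that $\mathcal{H}_i/\Z(\mathcal{H}_i)\cong(A/\bar Z_i,\,B/\bar W_i,\bar\beta,\bar T)$. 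By Remark \ref{Generators} the commutator subgroup $\mathcal{H}_i'$ projects onto $\mathcal{A}'$, and the $K$- and $L$-components of its generators are explicit expressions in $\tau_1^{(i)},\rho_i$ and $\tau_2^{(i)},\chi_i$. Thus establishing weak isoclinism reduces to proving (i) $\bar Z_1=\bar Z_2$ and $\bar W_1=\bar W_2$, and (ii) the existence of an isomorphism $\psi_2\colon H_1^{\phi_1}\to H_2^{\phi_2}$ (with a compatible $\eta_2$) covering the identity of $A^{\beta}$ and intertwining $\omega_{H_1},\omega^{\phi_1}_{H_1}$ with $\omega_{H_2},\omega^{\phi_2}_{H_2}$.

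The engine for both points is a character comparison powered by the transgression hypothesis. For a homomorphism $g\in\Hom_{RRB}(\mathcal{K},\mathcal{C})$—which, since $S=\mathrm{id}$ and $\alpha$ is trivial, amounts to a character of $M$—pushing $\mathcal{E}_i$ forward along $g$ yields a $\mathcal{C}$-central extension whose class is exactly $\Tra_{\mathcal{H}_i}(g)\in M_{RRB}(\mathcal{A})$ and whose commutator pairings are $g$ composed with the $K$- and $L$-fibre parts of $\omega_{H_i},\omega^{\phi_i}_{H_i}$ (and their $G$-analogues). Since the commutator pairings of a $\mathcal{C}$-extension depend only on its cohomology class, and since $\Tra_{\mathcal{H}_1},\Tra_{\mathcal{H}_2}$ are isomorphisms, the automorphism $\sigma=\Tra_{\mathcal{H}_2}^{-1}\Tra_{\mathcal{H}_1}$ of $\Hom_{RRB}(\mathcal{K},\mathcal{C})$ satisfies $\Tra_{\mathcal{H}_1}(g)=\Tra_{\mathcal{H}_2}(\sigma(g))$, whence $g$ applied to the fibre data of $\mathcal{H}_1$ equals $\sigma(g)$ applied to that of $\mathcal{H}_2$. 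Writing $\sigma(g)=g\circ\sigma^{\vee}$ for the dual automorphism $\sigma^{\vee}\in\Aut(M)$ and using that characters separate the points of the finite abelian group $M$, I conclude that the entire fibre commutator data of the two covers coincide after applying $\sigma^{\vee}$.

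From this, (i) is immediate: within $\Z^{\beta}_{T}(A)$ the extra conditions defining $\bar Z_i$ say precisely that $a$ pairs trivially under $\omega_{H_i},\omega^{\phi_i}_{H_i}$, a property preserved by the automorphism $\sigma^{\vee}$, so $\bar Z_1=\bar Z_2$ and likewise $\bar W_1=\bar W_2$; the resulting identifications $\mathcal{H}_i/\Z(\mathcal{H}_i)\cong(A/\bar Z,B/\bar W,\bar\beta,\bar T)$ then furnish $(\psi_1,\eta_1)$ as the isomorphism induced by the identity of $\mathcal{A}$ modulo its centre. For (ii), I define $\psi_2$ and $\eta_2$ on the generators of Remark \ref{Generators} by keeping the $\mathcal{A}'$-component and applying $\sigma^{\vee}$ to the fibre; the fibre-matching just proved guarantees that the commutator-type defining relations are respected, so that $\psi_2$ is a well-defined group isomorphism, that $(\psi_2,\eta_2)$ intertwines all four cocycle components and hence is a homomorphism $\I(\mathcal{H}_1')\to\I(\mathcal{H}_2')$ of relative Rota--Baxter groups, and that diagram \eqref{cd1} commutes by construction.

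The main obstacle is the simultaneous bookkeeping of all four fibre pairings: one must check that a single dual automorphism $\sigma^{\vee}$ renders the $\tau_1$-, $\tau_2$-, $\rho$- and $\chi$-fibre data compatible at once, and that the resulting fibrewise assignment on the generators of $H_i^{\phi_i}$ and $R_i(H_i^{\phi_i})$ extends consistently over the relations—i.e.\ that the extension class of the commutator subgroup over $\mathcal{A}'$ is genuinely determined by the commutator pairing alone. Verifying that $(\psi_2,\eta_2)$ respects the operator relation $\eta_2R_1=R_2\psi_2$ and the action relation, which couple the $\chi$- and $\rho$-data to the two group structures, is the most delicate part, and is where the full force of both transgression maps being isomorphisms is used.
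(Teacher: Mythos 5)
Your overall strategy is genuinely close to the paper's: both arguments exploit the hypothesis that $\Tra_{\mathcal{H}_1}$ and $\Tra_{\mathcal{H}_2}$ are isomorphisms to produce characters of $M_{RRB}(\mathcal{A})$ relating the two cocycles, and both then compare centers via Lemma \ref{Centrecondn} and commutator subgroups via Remark \ref{Generators}. Your treatment of part (i) is essentially sound, because on elements satisfying the centrality conditions the relevant cocycle expressions really are invariant under coboundaries. The gap is in part (ii). The relation you extract from the transgression hypothesis is an equality of \emph{cohomology classes}: $g$ applied to the fibre data of $\mathcal{H}_1$ equals $\sigma(g)$ applied to that of $\mathcal{H}_2$ only \emph{up to a coboundary} $(\partial^1\theta_1,\partial^1\theta_2,\lambda_1,\lambda_2)$ depending on $g$. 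Consequently your separation-of-characters argument does not yield that the raw fibre commutator data coincide after applying $\sigma^{\vee}$; it yields this only modulo those corrections. Concretely, the image under your proposed $\psi_2$ (``keep the $\mathcal{A}'$-component, apply $\sigma^{\vee}$ to the fibre'') of the generator of $H_1^{\phi_1}$ attached to $a_1,a_2$ differs from the corresponding generator of $H_2^{\phi_2}$ by the factor $\theta_1(a_1a_2a_1^{-1}a_2^{-1})$ (and by $\theta_1(\beta_b(a)a^{-1})$ for the second family of generators), which need not be $1$ since $\theta_1$ is merely a map. So the assignment does not even match generators to generators, and no argument is given that a map prescribed on generators of $H_1^{\phi_1}$ extends to a group homomorphism — your closing paragraph explicitly lists this, together with the compatibilities $\tilde{\eta}_2R_1=R_2\tilde{\psi}_2$ and $\tilde{\psi}_2\phi_{1}=\phi_{2}\tilde{\psi}_2$, as unresolved obstacles rather than proving them.

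The paper circumvents exactly this difficulty by building the coboundary correction into the map and defining it on elements rather than on generators: it sets $\tilde{\psi}_2(a,k)=\big(a,\psi(k)\theta_1(a)^{-1}\big)$ on all of $H_1=A\times_{\tau^1_1}K$, with target the auxiliary extension $A\times_{\tau^2_1}(\mathbb{C}^\times)^m$, verifies directly from \eqref{intermediate equation} that this is a homomorphism of relative Rota--Baxter groups, and only then checks (with the $\theta_1$ terms now telescoping on commutator words) that it carries the generators of $H_1^{\phi_1}$ onto those of $H_2^{\phi_2}$; invertibility of $\tilde{\psi}_2|$ is obtained by running the symmetric construction. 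To repair your argument you would need to replace the generator-level definition of $\psi_2$ by such a globally defined, coboundary-corrected map and actually carry out the homomorphism and compatibility verifications; as written, the proof is incomplete at its central step.
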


\begin{proof}
Let $(\tau^1_1,\tau^1_2,\rho^1,\chi^1), (\tau^2_1,\tau^2_2,\rho^2,\chi^2)  \in \Z^2_{RRB}(\mathcal{A}, \mathcal{K})$ be two 2-cocycles corresponding to the extensions $\mathcal{H}_1$ and $\mathcal{H}_2$, respectively. By Proposition \ref{cohomology with product coeff}, there exist $(\tau^1_{1_i},\tau^1_{2_i},\rho^1_i,\chi^1_i), (\tau^2_{1_i},\tau^2_{2_i},\rho^2_i,\chi^2_i) \in \Z^2_{RRB}(\mathcal{A}, \langle c_i \rangle)$ for each $i$, such that
	\begin{eqnarray*}
		\tau^1_1(a_1,a_2)&=& \big(\tau^1_{1_1}(a_1,a_2),\dots,\tau^1_{1_m}(a_1,a_2) \big),\\
		\tau^1_2(b_1,b_2)&=& \big(\tau^1_{2_1}(b_1,b_2),\dots,\tau^1_{2_m}(b_1,b_2) \big),\\
		\rho^1(a_1,b_1)&=& \big(\rho^1_1(a_1,b_1),\dots,\rho^1_m(a_1,b_1) \big),\\
		\chi^1(a_1)&=& \big(\chi_1^1(a_1),\dots,\chi^1_m(a_1) \big),\\
\tau^2_1(a_1,a_2)&=& \big(\tau^2_{1_1}(a_1,a_2),\dots,\tau^2_{1_m}(a_1,a_2) \big),\\
		\tau^2_2(b_1,b_2)&=& \big(\tau^2_{2_1}(b_1,b_2),\dots,\tau^2_{2_m}(b_1,b_2) \big),\\
		\rho^2(a_1,b_1)&=& \big(\rho^2_1(a_1,b_1),\dots,\rho^2_m(a_1,b_1) \big),\\
		\chi^2(a_1)&=& \big(\chi^2_1(a_1),\dots,\chi^2_m(a_1)\big)
	\end{eqnarray*}
for all $a_1, a_2 \in A$ and $b_1,b_2 \in B$. We can view each $ (\tau^1_{1_i},\tau^1_{2_i},\rho^1_i,\chi^1_i)$ as a representative of some element of $M_{RRB}(\mathcal{A})$. Similarly, each $(\tau^2_{1_i},\tau^2_{2_i},\rho^2_i,\chi^2_i)$ represents some element of $M_{RRB}(\mathcal{A})$. We are given that the transgression maps
	$$\Tra_{\mathcal{H}_1}, \Tra_{\mathcal{H}_2}: \Hom_{RRB}(\mathcal{K},\mathcal{C})\longrightarrow M_{RRB}(\mathcal{A})$$ are isomorphisms. Thus, for each $i$, there is a homomorphism $(\psi_i,\eta_i) \in \Hom_{RRB}(\mathcal{K},\mathcal{C})$ such that $(\psi_i\tau^1_1,\eta_i\tau^1_2,\psi_i\rho^1,\eta_i\chi^1)$ is cohomologous to $(\tau^2_{1_i},\tau^2_{2_i},\rho^2_i,\chi^2_i)$. Thus, there are maps $\theta_{1_i}:A\longrightarrow\mathbb{C}^{\times}$ and $\theta_{2_i}:B\longrightarrow\mathbb{C}^{\times}$ such that 
\begin{equation}\label{intermediate equation}
 \big( (\psi_i\tau^1_1) \cdot \partial (\theta_{1_i}),\, (\eta_i\tau^1_2) \cdot \partial (\theta_{2_i}), \,(\psi_i\rho^1) \cdot \lambda_1,\, (\eta_i\chi^1) \cdot \lambda_2 \big)=(\tau^2_{1_i},\tau^2_{2_i},\rho^2_i,\chi^2_i),
 \end{equation}
	where $\cdot$ is the point-wise product of maps, and $\lambda_1$ and $\lambda_2$ are defined in \eqref{t3} and \eqref{t4}, respectively. We claim that $\Z^{\phi_1}_{R_1}(H_1)=\Z^{\phi_2}_{R_2}(H_2)$ as sets. Let $(a,k) \in \Z^{\phi_1}_{R_1}(H_1)$. By Lemma \ref{Centrecondn}, we have $a\in \Z(A)$ and $\tau^1_1(a,x)=\tau^1_1(x,a)$ for all  $x\in A$. Then, for all $x\in A$, we have
	\begin{eqnarray*}
		\tau^2_{1_i}(a,x)&=&\psi_i(\tau^1_1(a,x)) \, \theta_{1_i}(x)\theta_{1_i}(ax)^{-1}\theta_{1_i}(a)\\
		&=&\psi_i(\tau^1_1(x,a)) \,\theta_{1_i}(a)\theta_{1_i}(xa)^{-1}\theta_{1_i}(x)\\
		&=&\tau^2_{1_i}(x,a).
	\end{eqnarray*}
Hence, $\tau^2_1(a,x)=\tau^2_1(x,a)$ for all $x\in A$. Again, by Lemma \ref{Centrecondn}, we have $\beta_{T(a)}(x)=x$ and $\rho^1(x,T(a))=1$ for all $x\in A$. This gives
$$ \rho^2_i(x,T(a))=\psi_i(\rho^1(x,T(a)))\theta_{1_i}(x)\theta_{1_i}(\beta_{T(a)}(x))^{-1}=1$$
for all $x \in A$, and hence $\rho^2(x,T(a))=1$ for all $x\in A$.	Similarly, it can be proved that $\rho^2(a,b)=1$ for all $b\in B$, and hence  $(a,k) \in \Z^{\phi_2}_{R_2}(H_2)$. By interchanging the roles, we obtain  $\Z^{\phi_1}_{R_1}(H_1)=\Z^{\phi_2}_{R_2}(H_2)$.
\par

Next, we claim that $\Ker(\phi_1)=\Ker(\phi_2)$ as sets. If $(b,l)\in\Ker(\phi_1)$, then $\phi_{1_{(b,l)}}(a,k)=(a,k)$ for all $(a,k)\in A\times_{\tau^1_1}K$. This gives $\beta_b(a)=a$  and $\rho^1(a,b)=1$ for all $a\in A$. Since $\psi_i\rho^1\lambda_1=\rho^2_i$, we get  $\rho^2_i(a,b)=1$ for all $i$. This gives $\rho^2(a,b)=1$ for all $a\in A$, and hence $(b,l)\in\Ker(\phi_2)$. Interchanging the roles, we obtain $\Ker(\phi_1)=\Ker(\phi_2)$.
\par

The preceding two claims yield well-defined group homomorphisms 
	$$\tilde{\psi}_1:H_1/\Z^{\phi_1}_{R_1}(H_1)\longrightarrow H_2/\Z^{\phi_2}_{R_2}(H_2) \quad\textrm{and} \quad \tilde{\eta}_1:G_1/\Ker(\phi_1)\longrightarrow G_2/\Ker(\phi_2)$$	
	 given by
$$	\tilde{\psi_1} \big(\widetilde{(a,k)} \big)=\overline{(a,k)} \quad\textrm{and} \quad \tilde{\eta}_1 \big(\widetilde{(b,l)}\big)=\overline{(b,l)}.$$
We claim that the tuple
	$$(\tilde{\psi}_1, \tilde{\eta}_1): (H_1, G_1, \phi_1, R_1) / \Z(H_1, G_1,  \phi_1, R_1 ) \rightarrow (H_2, G_2, \phi_2, R_2) / \Z(H_2, G_2, \phi_2, R_2)$$
is, in fact, an isomorphism of relative Rota-Baxter groups. If $\widetilde{(a,k)} \in H_1/\Z^{\phi_1}_{R_1}(H_1)$, then 
$$ \tilde{\eta}_1\overline{R}_1\big(\widetilde{(a,k)} \big) =\tilde{\eta}_1 \big(\widetilde{R_1(a,k)} \big)=\tilde{\eta}_1\widetilde{\big(T(a),\chi^1(a)S(k) \big)}=\overline{\big(T(a),\chi^1(a)S(k) \big)}$$
and
$$ \overline{R}_2\tilde{\psi}_1\big(\widetilde{(a,k)}\big)= \overline{R}_2 \big(\overline{(a,k)}\big)=\overline{(T(a),\chi^2(a)S(k))}.$$
Since we can write $(T(a),\chi^1(a)S(k))=(T(a),\chi^2(a)S(k))(1,\chi^1(a)\chi^2(a)^{-1})$, where $(1,\chi^1(a)^{-1}\chi^2(a)) \in \Ker(\phi_2)$, it follows that $\overline{(T(a),\chi^1(a)S(k))}=\overline{(T(a),\chi^2(a)S(k))}$.  Next, for $\widetilde{(b,l)} \in G_1/\Ker(\phi_1)$, we have
$$ \tilde{\psi}_1\overline{\phi}_{1_{\widetilde{(b,l)}}}\widetilde{(a,k)} = \tilde{\psi}_1\widetilde{\big(\beta_b(a),\rho^1(a,b)k \big)}=\overline{(\beta_b(a),\rho^1(a,b)k)}$$
and
$$ \overline{\phi}_{2_{\tilde{\eta}_{1}\widetilde{(b,l)}}}\tilde{\psi_1}\widetilde{(a,k)} = \overline{\phi}_{2_{\overline{(b,l)}}}\overline{(a,k)}=\overline{(\beta_b(a),\rho^2(a,b)k)}.$$
Since $(\beta_b(a),\rho^1(a,b)k)=(\beta_b(a),\rho^2(a,b)k)(1,\rho^1(a,b)\rho^2(a,b)^{-1})$, where $(1,\rho^1(a,b)\rho^2(a,b)^{-1}) \in \Z^{\phi_2}_{R_2}(H_2)$, it follows that   $\overline{(\beta_b(a),\rho^1(a,b)k)}=\overline{(\beta_b(a),\rho^2(a,b)k)}$.   Thus, $(\tilde{\psi}_1, \tilde{\eta}_1)$ is a homomorphism of relative Rota-Baxter groups.
 Clearly, both  $\tilde{\psi_1}$ and $\tilde{\eta_1}$ are bijective, and hence  $(\tilde{\psi}_1, \tilde{\eta}_1)$ is an isomorphism of relative Rota-Baxter groups.
\par 	

Consider the relative Rota-Baxter  group $\tilde{\mathcal{H}}_2= \big(A\times_{\tau^2_1} (\mathbb{C}^{\times})^{m}, \, B\times_{\tau^2_2}(\mathbb{C}^{\times})^{m}, \, \phi_2, \, R_2 \big)$, which is a central extension of $\mathcal{A}$ by $\big((\mathbb{C}^{\times})^m,\, (\mathbb{C}^{\times})^m,\, \alpha^0\times\dots\times\alpha^0,\, S^0\times\dots\times S^0 \big)$ corresponding to the 2-cocycle $(\tau^2_1,\tau^2_2,\rho^2,\chi^2)$. Here we abuse the notation and the map $\phi_2:B\times_{\tau^2_2}(\mathbb{C}^{\times})^{m}\to \Aut(A\times_{\tau^2_1} (\mathbb{C}^{\times})^{m})$ is given by $\phi_{2_{(b,l)}}(a,k)=(\beta_b(a),\rho^2(a,b)\,k)$, whereas the map $R_2:A\times_{\tau^2_1} (\mathbb{C}^{\times})^{m}\to B\times_{\tau^2_2}(\mathbb{C}^{\times})^{m}$ is given by $R_2(a,k)=(T(a),\chi^2(a) \,k)$.  Consider the maps
\begin{eqnarray*}
\psi\colon K\to (\C^{\times})^m & \textrm{given by}& \psi(k) = \big(\psi_1(k),\dots , \psi_m(k) \big), \\
\eta\colon L\to (\C^{\times})^m & \textrm{given by}& \eta(l) = \big(\eta_1(l),\dots , \eta_m(l) \big), \\
\theta_1\colon A\to (\C^{\times})^m & \textrm{given by}& \theta_1(a) = \big(\theta_{1_1}(a),\dots , \theta_{1_m}(a) \big), \\
\theta_2\colon B\to (\C^{\times})^m & \textrm{given by}& \theta_2(b) = \big(\theta_{2_1}(b),\dots , \theta_{2_m}(b) \big).
\end{eqnarray*}
This gives
$$(\tilde{\psi}_2, \tilde{\eta}_2): \big(A\times_{\tau^1_1} K, \, B\times_{\tau^1_2} L, \, \phi_1, \, R_1 \big)\rightarrow \big(A\times_{\tau^2_1}(\C^{\times})^m, \, B\times_{\tau^2_2} (\C^{\times})^m, \, \phi_2, \, R_2 \big)$$ defined by
$$\tilde{\psi}_2(a,k)=\big(a,\psi(k)\theta_1(a)^{-1} \big) \quad \textrm{and} \quad \tilde{\eta}_2(b,l)= \big(b,\eta(l)\theta_2(b)^{-1} \big).$$
For $a_1, a_2 \in A$ and $k_1, k_2 \in K$, we have
\begin{eqnarray*}
\tilde{\psi}_2 \big((a_1,k_1)(a_2,k_2) \big)&=&\tilde{\psi}_2 \big(a_1a_2,k_1k_2\tau^1_1(a_1,a_2) \big)\\
&=& \big(a_1a_2,\psi(k_1k_2\tau^1_1(a_1,a_2))\theta_1(a_1,a_2)^{-1} \big)\\
&=& \big(a_1a_2,\psi(k_1k_2)\psi(\tau^1_1(a_1,a_2))\theta_1(a_1a_2)^{-1} \big)\\
&=& \big(a_1a_2,\psi(k_1k_2)\psi(\tau^1_1(a_1,a_2))\theta_1(a_2)\theta_1(a_1a_2)^{-1}\theta_1(a_1)\theta_1(a_1)^{-1}\theta_{1}(a_2)^{-1} \big)\\
&=& \big(a_1a_2,\psi(k_1k_2)\tau^2_1(a_1,a_2)\theta_1(a_1)^{-1}\theta_{1}(a_2)^{-1} \big),~\textrm{using}~ \eqref{intermediate equation}\\
&=& \big(a_1a_2,\psi(k_1)\theta_1(a_1)^{-1}\psi(k_2)\theta_1(a_2)^{-1}\tau^2_1(a_1,a_2) \big)\\
&=& \big(a_1,\psi(k_1)\theta_1(a_1)^{-1} \big) \big(a_2,\psi(k_2)\theta_1(a_2)^{-1} \big)\\
&=& \tilde{\psi}_2(a_1,k_1) \, \tilde{\psi_2}(a_2,k_2),
\end{eqnarray*}
which proves that $\tilde{\psi}_2$ is a group homomorphism. Similarly, we can prove that $\tilde{\eta}_2$ is a group homomorphism. We claim that $\tilde{\eta}_2R_1=R_2\tilde{\psi}_2$ and $\tilde{\psi}_2\phi_{1_{(b,l)}}=\phi_{2_{\tilde{\eta}_2(b,l)}}\tilde{\psi}_2$ for all $(b, l) \in  B\times_{\tau^1_2} L$. For $(a,k) \in A\times_{\tau^1_1} K$, we see that
\begin{eqnarray*}
\tilde{\eta}_2R_1(a,k)&=&\tilde{\eta}_2 \big(T(a),\chi^1(a)S(k) \big)\\
&=& \big(T(a),\eta(\chi^1(a)S(k))\theta_2(T(a))^{-1} \big)\\
&=& \big(T(a),\eta(\chi^1(a))\eta(S(k))\theta_2(T(a))^{-1} \big).
\end{eqnarray*}	 
Observe that 
$$ \eta(\chi^1(a))\eta(S(k))\theta_2(T(a))^{-1}= \big(\eta_1(\chi^1(a))\eta_1(S(k))\theta_{2_1}(T(a))^{-1}, \dots,\eta_m(\chi^1(a))\eta_m(S(k))\theta_{2_m}(T(a))^{-1} \big).$$
Further, by \eqref{intermediate equation}, we obtain
\begin{eqnarray*}
\eta_i(\chi^1(a))\eta_i(S(k))\theta_{2_i}(T(a))^{-1}&=&\eta_i(\chi^1(a))S^0(\theta_{1_i}(a))\theta_{2_i}(T(a))^{-1}S^0(\theta_{1_i}(a))^{-1}\eta_i(S(k))\\
&=&\chi^2_i(a)S^0(\theta_{1_i}(a))^{-1}\eta_i(S(k))
\end{eqnarray*}
for each $i$. Thus, we can write
$$
\tilde{\eta}_2R_1(a,k)=\big(T(a),\eta(\chi^1(a))\eta(S(k))\theta_2(T(a))^{-1} \big) = \big(T(a),\chi^2(a)(S^0\times\dots \times S^0)(\theta_{1}(a)^{-1})\eta(S(k)) \big).$$
On the other hand, we have
\begin{eqnarray*}
R_2\tilde{\psi}_2(a,k)&=&R_2 \big(a,\psi(k)\theta_1(a)^{-1} \big)\\
&=& \big(T(a),\chi^2(a)(S^0\times\dots \times S^0)(\psi(k)\theta_1(a)^{-1}) \big)\\
&=& \big(T(a),\chi^2(a)S(\psi(k))(S^0\times\dots \times S^0)(\theta_1(a)^{-1})\big).
\end{eqnarray*}
Since $(\psi_i,\eta_i)\in\Hom_{RRB}(\mathcal{K},\mathcal{C})$, we obtain
$$ \eta(S(k)) = \big(\eta_1(S(k)),\dots,\eta_m(S(k))\big)= \big(S^0(\psi_1(k)),\dots,S^0(\psi_m(k)) \big) =S(\psi(k)),$$
which implies that $\tilde{\eta}_2R_1=R_2\tilde{\psi}_2$. Next, we see that
\begin{eqnarray*}
\tilde{\psi}_2\phi_{1_{(b,l)}}(a,k)&=&\tilde{\psi}_2 \big(\beta_b(a),\rho^1(a,b)k \big)\\
&=& \big(\beta_b(a),\psi(\rho^1(a,b)k)\theta_1(\beta_b(a))^{-1} \big)\\
&=& \big(\beta_b(a),\psi(\rho^1(a,b))\psi(k)\theta_1(\beta_b(a))^{-1}\theta_1(a)\theta_1(a)^{-1} \big)\\
&=& \big(\beta_b(a),\rho^2(a,b)\psi(k)\theta_1(a)^{-1} \big), ~\textrm{by}~ \eqref{intermediate equation}\\
&=&	\phi_{2_{\tilde{\eta}_2(b,l)}}(a,\psi(k)\theta_1(a)^{-1})\\
&=& 	\phi_{2_{\tilde{\eta}_2(b,l)}}\tilde{\psi}_2(a,k).
\end{eqnarray*}
Thus, $(\tilde{\psi}_2, \tilde{\eta}_2)$ is a homomorphism of relative Rota-Baxter groups. In view of Remark \ref{Generators}, we see that
	\begin{eqnarray*}
		\tilde{\psi_2} \big((a_1a_2a_1^{-1}a_2^{-1}, ~\tau^1_1(a_1,a_2)\tau^1_1(a_1,a_1^{-1})^{-1}\tau^1_1(a_1a_2,a_1^{-1})\tau^1_1(a_2,a_2^{-1})^{-1}\tau^1_1(a_1a_2a_1^{-1},a_2^{-1}))\big)\\
		=\big(a_1a_2a_1^{-1}a_2^{-1},~\tau^2_1(a_1,a_2)\tau^2_1(a_1,a_1^{-1})^{-1}\tau^2_1(a_1a_2,a_1^{-1})\tau^2_1(a_2,a_2^{-1})^{-1}\tau^2_1(a_1a_2a_1^{-1},a_2^{-1})\big),
	\end{eqnarray*}
and
$$ \tilde{\psi_2} \big((\beta_b(a)a^{-1},~\rho^1(a,b)\tau^1_1(a,a^{-1})^{-1}\tau^1_1(\beta_b(a),a^{-1})) \big)= \big(\beta_b(a)a^{-1}, ~\rho^2(a,b)\tau^2_1(a,a^{-1})^{-1}\tau^2_1(\beta_b(a),a^{-1}) \big).$$
Thus, $\tilde{\psi_2}$ maps the generators of $H_1^{\phi_1}$ to the generators of $H_2^{\phi_2}$, and hence $(\tilde{\psi}_2, \tilde{\eta}_2)$ descends to the homomorphism $$(\tilde{\psi}_2|, \tilde{\eta}_2|): \big(H_1^{\phi_1},R_1(H_1^{\phi_1}), \phi_1, R_1 \big)\rightarrow \big(H_2^{\phi_2} ,R_2(H_2^{\phi_2}), \phi_2, R_2\big)$$ of relative Rota-Baxter groups.  In other words, $(\tilde{\psi}_2|, \tilde{\eta}_2|): \I((H_1,G_1,\phi_1,R_1)') \to \I((H_2,G_2,\phi_2,R_2)')$ is a homomorphism of relative Rota-Baxter groups. It turns out that $\tilde{\psi}_2|: H_1^{\phi_1} \to H_2^{\phi_2}$is an isomorphism of groups. To see this, we construct group  homomorphism  $\tilde{\psi}_3:A\times_{\tau^2_1} K\rightarrow A\times_{\tau^1_1}(\C^{\times})^m$ as before. Then $\tilde{\psi}_3|: H_2^{\phi_2} \to H_1^{\phi_1}$is the desired inverse of $\tilde{\psi}_2|$. The commutativity of the diagram \ref{cd1} follows from the above assertions. By interchanging roles of $\mathcal{H}_1$ and  $\mathcal{H}_2$,  we can see that $\mathcal{H}_2$ is also weakly isoclinic to  $\mathcal{H}_1$. This completes the proof.
\end{proof}

 \begin{defn}
	Let $\mathcal{A}=(A,B,\beta,T)$ be a relative Rota-Baxter group. A Schur cover of $\mathcal{A}$ is a relative Rota-Baxter group $\mathcal{H}=(H,G,\phi,R)$ such that there exists a central extension
	$$ \quad  {\bf 1} \longrightarrow \mathcal{K} \stackrel{(i_1, i_2)}{\longrightarrow}  \mathcal{H} \stackrel{(\pi_1, \pi_2)}{\longrightarrow} \mathcal{A} \longrightarrow {\bf 1}$$
	such that $\mathcal{K} \subseteq \mathcal{H}^{'}$, where $\mathcal{K}= \big(M_{RRB}(\mathcal{A}), \,M_{RRB}(\mathcal{A}), \,\alpha^0|\times\dots \times \alpha^0|, \,S^0|\times\dots \times S^0| \big)$.
\end{defn}

\begin{remark}
When $\mathcal{A}$ is a bijective relative Rota-Baxter group, then the preceding definition aligns seamlessly with the concept of Schur covers of skew left braces introduced by Letourmy and Vendramin
in \cite{TV23}. This alignment is achieved by recognizing every skew left brace as a bijective relative Rota-Baxter group under the isomorphism between the two categories as established in \cite[Theorem 4.6]{NM1}.
\end{remark}

\begin{prop}\label{SC2}
Let   $\mathcal{H}$ be a Schur cover of a finite relative Rota-Baxter group $\mathcal{A}$ and
$$
  {\bf 1} \longrightarrow \mathcal{K}=(K,L, \alpha,S ) \stackrel{(i_1, i_2)}{\longrightarrow}  \mathcal{H}=(H,G, \phi, R) \stackrel{(\pi_1, \pi_2)}{\longrightarrow} \mathcal{A}=(A,B, \beta, T) \longrightarrow {\bf 1}
$$
the corresponding central extension. Then the transgression map $ \Tra : {\Hom_{RRB}{(\mathcal{K},\mathcal{C})}}{\longrightarrow}{\Ho^2_{RRB}{(\mathcal{A}, \mathcal{C})}}$ is an isomorphism.
\end{prop}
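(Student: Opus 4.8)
The plan is to apply the Hochschild--Serre exact sequence of Theorem \ref{HSS} to the given central extension with coefficient module $\mathcal{M}=\mathcal{C}$, and then to show that the transgression $\Tra$ is simultaneously injective and surjective. Recall that $\Ho^2_{RRB}(\mathcal{A},\mathcal{C})=M_{RRB}(\mathcal{A})$ by the very definition of the Schur multiplier, so the relevant portion of the sequence reads
$$\Hom_{RRB}(\mathcal{H},\mathcal{C}) \stackrel{\Res}{\longrightarrow} \Hom_{RRB}(\mathcal{K},\mathcal{C}) \stackrel{\Tra}{\longrightarrow} M_{RRB}(\mathcal{A}) \stackrel{\overline{\Inf}}{\longrightarrow} \Ho^2_{RRB}(\mathcal{H},\mathcal{C}).$$

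First I would establish injectivity. By exactness at $\Hom_{RRB}(\mathcal{K},\mathcal{C})$ we have $\ker(\Tra)=\im(\Res)$, so it suffices to show that $\Res$ is the trivial homomorphism. This is exactly where the Schur cover hypothesis is used: since $\mathcal{K}\subseteq\mathcal{H}'$, and the operator $S=S^0|\times\cdots\times S^0|$ is the identity map on $M_{RRB}(\mathcal{A})$ and hence surjective, Lemma \ref{SC1} applied with $\mathcal{H}$ in the role of the ambient relative Rota-Baxter group shows that every homomorphism $\mathcal{H}\to\mathcal{C}$ restricts to the trivial homomorphism $\mathcal{K}\to\mathcal{C}$. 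Thus $\im(\Res)$ is trivial, and $\Tra$ is injective.

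For surjectivity I would use a counting argument rather than trying to show $\overline{\Inf}$ vanishes directly. Since $\mathcal{A}$ is finite, $M_{RRB}(\mathcal{A})$ is a finite abelian group by Theorem \ref{VI2}. Because $\mathcal{K}$ and $\mathcal{C}$ are both bijective relative Rota-Baxter groups (note $S$ is a bijection of $M_{RRB}(\mathcal{A})$), the same reasoning as in the proof of Theorem \ref{existence of Transgression Isomorphism}, using \cite[Proposition 4.5]{NM1} together with the fact that $\Hom_{Group}(M_{RRB}(\mathcal{A}),\mathbb{C}^{\times})\cong M_{RRB}(\mathcal{A})$, yields
$$\Hom_{RRB}(\mathcal{K},\mathcal{C})\cong \Hom_{Group}(M_{RRB}(\mathcal{A}),\mathbb{C}^{\times})\cong M_{RRB}(\mathcal{A}).$$
In particular $|\Hom_{RRB}(\mathcal{K},\mathcal{C})|=|M_{RRB}(\mathcal{A})|=|\Ho^2_{RRB}(\mathcal{A},\mathcal{C})|$. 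An injective homomorphism between finite groups of equal order is necessarily surjective, so $\Tra$ is an isomorphism.

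The main obstacle is the surjectivity, since exactness by itself only gives $\im(\Tra)=\ker(\overline{\Inf})$ and does not force $\overline{\Inf}$ to be trivial. The cardinality computation for $\Hom_{RRB}(\mathcal{K},\mathcal{C})$ is what circumvents this difficulty: once injectivity is secured from Lemma \ref{SC1}, matching the finite orders of domain and codomain suffices. The one point requiring care is the verification that $\mathcal{K}$ is bijective, so that \cite[Proposition 4.5]{NM1} is applicable; this holds precisely because $S^0$ is the identity, whence $S$ is the identity automorphism of $M_{RRB}(\mathcal{A})$.
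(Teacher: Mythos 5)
Your proposal is correct and follows essentially the same route as the paper: injectivity of $\Tra$ via Lemma \ref{SC1} combined with exactness in Theorem \ref{HSS}, and then the identification $\Hom_{RRB}(\mathcal{K},\mathcal{C})\cong\Hom_{Group}(M_{RRB}(\mathcal{A}),\mathbb{C}^{\times})\cong M_{RRB}(\mathcal{A})$ (finite by Theorem \ref{VI2}) to upgrade injectivity to an isomorphism by a cardinality count. Your explicit checks that $S$ is surjective (for Lemma \ref{SC1}) and that $\mathcal{K}$ is bijective (for the cited Proposition 4.5) are details the paper leaves implicit, but the argument is the same.
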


\begin{proof}
Suppose that $\mathcal{H}$ is a Schur cover of $\mathcal{A}$. Since $\mathcal{K}\subseteq \mathcal{H}'$, by Lemma \ref{SC1}, every homomorphism $\mathcal{H}\to\mathcal{C}$ restricts to the trivial homomorphism $\mathcal{K}\to\mathcal{C}$. It follows from Theorem \ref{HSS} that the map $ \Tra : {\Hom_{RRB}{(\mathcal{K},\mathcal{C})}}{\longrightarrow}{\Ho^2_{RRB}{(\mathcal{A}, \mathcal{C})}}$ is injective. Since $\mathcal{A}$ is finite, it follows from Theorem \ref{VI2} that $K=M_{RRB}(\mathcal{A})$ is finite, and hence $\Hom_{Group}(K, \mathbb{C}^{\times})\cong K$. Further, as both $\mathcal{K}$ and $\mathcal{C}$ are bijective relative Rota-Baxter groups, using \cite[Proposition 4.5]{NM1}, we obtain
$$\Hom_{RRB}(\mathcal{K},\mathcal{C}) \cong  \Hom_{Group}(K, \mathbb{C}^{\times}) \cong K=   M_{RRB}(\mathcal{A}).$$ 
Hence, it follows that the map $\Tra$ is an isomorphism. 
\end{proof}

\begin{prop}\label{Transgression Isomorphism and Schur cover}
Let $\mathcal{A}$ be a  finite bijective relative Rota-Baxter group and
$$
{\bf 1} \longrightarrow \mathcal{K}=(K,L, \alpha,S ) \stackrel{(i_1, i_2)}{\longrightarrow}  \mathcal{H}=(H,G, \phi, R) \stackrel{(\pi_1, \pi_2)}{\longrightarrow} \mathcal{A}=(A,B, \beta, T) \longrightarrow {\bf 1}
$$ a central extension of $\mathcal{A}$ by $\mathcal{K}=\big(M_{RRB}(\mathcal{A}), \,M_{RRB}(\mathcal{A}), \,\alpha^0|\times\dots \times \alpha^0|, \,S^0|\times\dots \times S^0| \big)$. Then $\mathcal{H}$ is a Schur cover of $\mathcal{A}$ if and only if the transgression map  $ \Tra : {\Hom_{RRB}{(\mathcal{K},\mathcal{C})}}{\longrightarrow}{\Ho^2_{RRB}{(\mathcal{A}, \mathcal{C})}}$ is an isomorphism.
\end{prop}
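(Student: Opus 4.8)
The statement is an equivalence, and one of the two implications is already in hand: the forward direction—if $\mathcal{H}$ is a Schur cover then $\Tra$ is an isomorphism—is precisely the content of Proposition \ref{SC2}, since the coefficient datum $\mathcal{K}$ here is exactly $\big(M_{RRB}(\mathcal{A}),M_{RRB}(\mathcal{A}),\alpha^0|\times\cdots\times\alpha^0|,S^0|\times\cdots\times S^0|\big)$. So nothing new is needed there, and the whole work lies in the converse.

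For the converse, the plan is to combine the Hochschild--Serre exact sequence of Theorem \ref{HSS} (taken with coefficient module $\mathcal{M}=\mathcal{C}$) with Lemma \ref{converse SC1}. Assume $\Tra$ is an isomorphism; I want to deduce $\mathcal{K}\subseteq\mathcal{H}'$, which is exactly the defining condition for $\mathcal{H}$ to be a Schur cover. Exactness at $\Hom_{RRB}(\mathcal{K},\mathcal{C})$ reads $\IM(\Res)=\Ker(\Tra)$; since $\Tra$ is injective, $\Ker(\Tra)$ is trivial, whence $\Res$ is the trivial homomorphism. Concretely this says that every homomorphism $\mathcal{H}\to\mathcal{C}$ restricts to the trivial homomorphism $\mathcal{K}\to\mathcal{C}$. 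I then invoke Lemma \ref{converse SC1} with $\mathcal{H}$ in the role of the ambient relative Rota-Baxter group and $\mathcal{K}$ as its subgroup, which immediately returns $\mathcal{K}\subseteq\mathcal{H}'$ and finishes the argument.

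To legitimately apply Lemma \ref{converse SC1} to $\mathcal{H}$ I must check its hypotheses, and this verification is where the real (if modest) work sits. Finiteness of $\mathcal{H}$ is clear, since $H$ and $G$ are extensions of the finite groups $A,B$ by the finite group $K=L=M_{RRB}(\mathcal{A})$, which is finite by Theorem \ref{VI2}. That $\mathcal{K}$ is a relative Rota-Baxter subgroup with surjective Rota-Baxter operator is also routine: by the classification of central extensions I may take $\mathcal{H}=(A\times_{\tau_1}K,\,B\times_{\tau_2}L,\,\phi,R)$, the embedded copy of $\mathcal{K}$ is $1\times K$ inside $H$ and $1\times L$ inside $G$, and the operator $S=S^0|\times\cdots\times S^0|$ is the identity map on $M_{RRB}(\mathcal{A})$, hence surjective. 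The one genuinely substantive point I expect to be the main obstacle is confirming that $\mathcal{H}$ is \emph{bijective}, i.e. that $R$ is a bijection; for this I use the explicit formula \eqref{Rformula}, $R(a,k)=(T(a),\chi(a)S(k))$. Since $\mathcal{A}$ is bijective, $T$ is a bijection, and $S$ is a bijection as noted; injectivity of $R$ then follows coordinatewise, while for surjectivity, given $(b,l)$ one solves $a=T^{-1}(b)$ and $k=S^{-1}(\chi(a)^{-1}l)$. With $\mathcal{H}$ thus shown to be finite and bijective, Lemma \ref{converse SC1} applies and yields $\mathcal{K}\subseteq\mathcal{H}'$, completing the proof.
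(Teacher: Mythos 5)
Your proposal is correct and follows essentially the same route as the paper: the forward direction is Proposition \ref{SC2}, and the converse uses exactness at $\Hom_{RRB}(\mathcal{K},\mathcal{C})$ in Theorem \ref{HSS} to deduce that $\Res$ is trivial, then Lemma \ref{converse SC1} to conclude $\mathcal{K}\subseteq\mathcal{H}'$. Your explicit verification of the hypotheses of Lemma \ref{converse SC1} (finiteness and bijectivity of $\mathcal{H}$, surjectivity of $S$) is a detail the paper leaves implicit, and it checks out.
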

\begin{proof}
The forward implication is simply  Proposition \ref{SC2}.	Conversely, if $\Tra$ is an isomorphism, then it follows from Theorem \ref{HSS} that the map $\Res : \Hom(\mathcal{H},\mathcal{C})\to \Hom(\mathcal{K},\mathcal{C})$ is trivial. Thus, by Lemma \ref{converse SC1}, we have $\mathcal{K}\subseteq\mathcal{H}'$, and  hence $\mathcal{H}$ is a Schur cover of $\mathcal{A}$.
\end{proof}

Theorem \ref{existence of Transgression Isomorphism} and Proposition \ref{Transgression Isomorphism and Schur cover} yield the following result.

\begin{cor}
Every finite bijective relative Rota Baxter group admits at least one Schur cover. 
\end{cor}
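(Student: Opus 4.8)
The plan is to obtain the corollary by directly combining Theorem \ref{existence of Transgression Isomorphism} with Proposition \ref{Transgression Isomorphism and Schur cover}, the only additional point being a bookkeeping identification of the kernel appearing in the former with the coefficient object $\mathcal{K}$ demanded by the definition of a Schur cover.

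First I would let $\mathcal{A}=(A,B,\beta,T)$ be a finite bijective relative Rota-Baxter group. By Theorem \ref{VI2}, $M_{RRB}(\mathcal{A})$ is a finite abelian group, so we may write it as an internal direct product $\langle c_1 \rangle \oplus \cdots \oplus \langle c_m \rangle$ of cyclic groups with $\langle c_i \rangle \cong G_{d_i}$. Applying Theorem \ref{existence of Transgression Isomorphism} to $\mathcal{A}$ then produces a central extension
$$ {\bf 1} \longrightarrow \mathcal{G} \longrightarrow \mathcal{H} \longrightarrow \mathcal{A} \longrightarrow {\bf 1},$$
where $\mathcal{G} = (G, G, \alpha^0|\times\cdots\times\alpha^0|, S^0|\times\cdots\times S^0|)$ with $G = G_{d_1}\times\cdots\times G_{d_m}$, and whose associated transgression map is an isomorphism.

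Next I would observe that the group isomorphism $G \cong M_{RRB}(\mathcal{A})$ furnished by the cyclic decomposition upgrades to an isomorphism of relative Rota-Baxter groups $\mathcal{G} \cong \mathcal{K}$, since both carry the trivial action $\alpha^0|\times\cdots\times\alpha^0|$ and the restricted identity operator $S^0|\times\cdots\times S^0|$. Transporting the extension along this isomorphism, I may regard $\mathcal{H}$ as a central extension of $\mathcal{A}$ by $\mathcal{K} = \big(M_{RRB}(\mathcal{A}), M_{RRB}(\mathcal{A}), \alpha^0|\times\cdots\times\alpha^0|, S^0|\times\cdots\times S^0|\big)$ whose transgression map remains an isomorphism. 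Since $\mathcal{A}$ is bijective, Proposition \ref{Transgression Isomorphism and Schur cover} now applies and tells us precisely that $\mathcal{H}$ is a Schur cover of $\mathcal{A}$, which establishes the claim.

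The only delicate point is the identification $\mathcal{G}\cong\mathcal{K}$ together with the verification that the transgression isomorphism is preserved under this identification; everything else is a direct appeal to the two cited results. I expect this matching of coefficient objects to be the main (and essentially the sole) obstacle, and it is routine. Note that bijectivity of $\mathcal{A}$ enters only through Proposition \ref{Transgression Isomorphism and Schur cover}, whose converse direction in turn rests on Lemma \ref{converse SC1}, proved precisely under the finite bijective hypothesis; Theorem \ref{existence of Transgression Isomorphism} itself requires only finiteness.
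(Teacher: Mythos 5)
Your proposal is correct and follows exactly the paper's route: the paper derives this corollary by combining Theorem \ref{existence of Transgression Isomorphism} with Proposition \ref{Transgression Isomorphism and Schur cover}, which is precisely your argument. The identification of the coefficient object $\mathcal{G}$ with $\mathcal{K}$ that you flag as the only delicate point is indeed routine and is left implicit in the paper.
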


Theorem \ref{thm:almost isoclinism} and Proposition \ref{SC2} yield the following result.

\begin{cor}
Any two Schur covers of a finite bijective relative Rota-Baxter group are weakly isoclinic.
\end{cor}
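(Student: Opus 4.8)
The plan is to combine Proposition \ref{SC2} with Theorem \ref{thm:almost isoclinism}, which were arranged precisely so as to dovetail. First I would fix a finite bijective relative Rota-Baxter group $\mathcal{A}$ and let $\mathcal{H}_1$ and $\mathcal{H}_2$ be any two Schur covers of $\mathcal{A}$. By the definition of a Schur cover, each $\mathcal{H}_i$ fits into a central extension
$$\mathbf{1} \longrightarrow \mathcal{K} \longrightarrow \mathcal{H}_i \longrightarrow \mathcal{A} \longrightarrow \mathbf{1}$$
whose kernel is the \emph{same} trivial $\mathcal{A}$-module $\mathcal{K} = \big(M_{RRB}(\mathcal{A}),\, M_{RRB}(\mathcal{A}),\, \alpha^0|\times\cdots\times\alpha^0|,\, S^0|\times\cdots\times S^0|\big)$, with $\mathcal{K} \subseteq \mathcal{H}_i'$. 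I would flag explicitly that the coefficient object is literally identical for both covers, since this is exactly what makes Theorem \ref{thm:almost isoclinism} applicable without any reconciliation of kernels.

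Next I would invoke Proposition \ref{SC2} for each $i$: because $\mathcal{H}_i$ is a Schur cover of the finite group $\mathcal{A}$, the associated transgression homomorphism $\Tra_{\mathcal{H}_i}\colon \Hom_{RRB}(\mathcal{K},\mathcal{C}) \longrightarrow \Ho^2_{RRB}(\mathcal{A},\mathcal{C})$ is an isomorphism. At this point I would record the small but essential observation that $\Ho^2_{RRB}(\mathcal{A},\mathcal{C}) = M_{RRB}(\mathcal{A})$ by the very definition of the Schur multiplier, so that both transgression maps may be viewed as isomorphisms onto $M_{RRB}(\mathcal{A})$, matching verbatim the hypotheses imposed in Theorem \ref{thm:almost isoclinism}.

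With both transgression maps known to be isomorphisms, Theorem \ref{thm:almost isoclinism} applies directly to the pair $\mathcal{E}_1, \mathcal{E}_2$ and yields at once that $\mathcal{H}_1$ and $\mathcal{H}_2$ are weakly isoclinic, which completes the argument. I do not anticipate any genuine obstacle here: all of the substantive work — producing the isomorphism on the central quotients, the group isomorphism between the commutator pieces $H_1^{\phi_1}$ and $H_2^{\phi_2}$, and the verification that the square \eqref{cd1} commutes — has already been carried out inside Theorem \ref{thm:almost isoclinism}, while Proposition \ref{SC2} supplies precisely the transgression hypothesis it requires. The only point demanding care is the bookkeeping that the kernel $\mathcal{K}$ and the transgression target $\Ho^2_{RRB}(\mathcal{A},\mathcal{C})$ are the prescribed objects in both invoked results; bijectivity of $\mathcal{A}$ enters only to guarantee, via the preceding corollary, that Schur covers exist at all, and plays no role in the isoclinism step itself.
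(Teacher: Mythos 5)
Your proposal is correct and follows exactly the paper's own route: the paper derives this corollary by combining Proposition \ref{SC2} (the transgression map of a Schur cover is an isomorphism) with Theorem \ref{thm:almost isoclinism} (two central extensions by $\mathcal{K}$ with transgression isomorphisms are weakly isoclinic). Your additional bookkeeping about the shared kernel $\mathcal{K}$ and the identification $\Ho^2_{RRB}(\mathcal{A},\mathcal{C})=M_{RRB}(\mathcal{A})$ is accurate and consistent with the paper.
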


\begin{remark}
In view of Theorem \ref{almost isoclinism rrbg implies isoclinism slb}, the Schur covers of the induced skew left brace are isoclinic. This aligns with the result established in \cite[Theorem 3.19]{TV23}.
	\end{remark}

\begin{ack}
	{\rm Pragya Belwal thanks UGC for the PhD research fellowship and Nishant Rathee thanks IISER Mohali for the institute post doctoral fellowship. Mahender Singh is supported by the SwarnaJayanti Fellowship grants DST/SJF/MSA-02/2018-19 and SB/SJF/2019-20/04.}
\end{ack}

\section{Declaration}
The authors declare that there is no data associated with this paper and that there are no conflicts of interest.

\end{document}